\tikzset{>=stealth}
\tikzset{link/.style={column sep=1.8cm,row sep=0.16cm}}
\tikzset{map/.style={row sep=0em, column sep=0em}}
\setlist[1]{wide}
\setlist[2]{leftmargin=15mm}
\setlist[enumerate]{label=\rm{(\arabic*)}}
\setlist[enumerate,2]{label=\rm({\it\roman*}), }
\setlist[itemize]{label=\raisebox{0.25ex}{\tiny$\bullet$}}
\theoremstyle{plain}
\newtheorem{teo}{Theorem}
\newtheorem{lem}[teo]{Lemma}
\newtheorem{prop}[teo]{Proposition}
\newtheorem{cor}[teo]{Corollary}
\newtheorem{claim}[teo]{Claim}
\theoremstyle{definition}
\newtheorem{defi}{Definition}
\newtheorem{notc}{Notation/Construction}
\newtheorem{notat}{Notation}
\newtheorem{rem}{Remark}
\newtheorem{cond}{Condition}
\newcommand{\Hom}{\operatorname{Hom}}
\newcommand{\Ker}{\operatorname{Ker}}
\renewcommand{\dim}{\operatorname{dim}}
\renewcommand{\Im}{\operatorname{Im}}
\renewcommand{\Ker}{\operatorname{Ker}}
\newcommand{\Min}{\operatorname{Min}}
\newcommand{\Cbb}{{\mathbb C}}
\newcommand{\Qbb}{{\mathbb Q}}
\newcommand{\Zbb}{{\mathbb Z}}
\newcommand{\Rbb}{{\mathbb R}}
\newcommand{\Pbb}{{\mathbb P}}
\newcommand{\Nbb}{{\mathbb N}}
\def\reseau1{
\draw[color=gray!60] (1,0) -- (1,4);
\draw[color=gray!60] (2,0) -- (2,4);
\draw[color=gray!60] (3,0) -- (3,4);

\draw[color=gray!60] (0,1) -- (4,1);
\draw[color=gray!60] (0,2) -- (4,2);
\draw[color=gray!60] (0,3) -- (4,3);
\node at (1.7,1.55) [color=gray] {0};
}
\def\reseau2{
\draw[color=gray!60] (-2,-3) -- (-2,3);
\draw[color=gray!60] (-1,-3) -- (-1,3);
\draw[color=gray!60] (0,-3) -- (0,3);
\draw[color=gray!60] (1,-3) -- (1,3);
\draw[color=gray!60] (2,-3) -- (2,3);

\draw[color=gray!60] (-3,-2) -- (3,-2);
\draw[color=gray!60] (-3,-1) -- (3,-1);
\draw[color=gray!60] (-3,0) -- (3,0);
\draw[color=gray!60] (-3,1) -- (3,1);
\draw[color=gray!60] (-3,2) -- (3,2);

\node at (-0.4,-0.5) [color=gray] {0};
}
\title[HoroSarkisov program]{A description of the Sarkisov program of horospherical varieties via moment polytopes}
\author{Enrica Floris$^1$ \and Boris Pasquier$^1$}
\date{\today\\
\indent $^1$Universit\'e de Poitiers, CNRS, LMA, Poitiers, France}
\begin{document}

\thanks{We would like to thank M.~Brion for his comments on an earlier draft of this paper and R.~Terpereau for useful conversations.\\
\indent
Both authors acknowledge support by the ANR Project FIBALGA ANR-18-CE40-0003-01}

\begin{abstract}
Let $X$ and $Y$ be horospherical Mori fibre spaces which are birational equivariantly with respect to the group action.
Then, there is a horospherical Sarkisov program from $X/S$ to $Y/T$.
\end{abstract}

\maketitle
\setcounter{tocdepth}{1}
\tableofcontents
\section{Introduction}

Horospherical varieties are examples of varieties endowed with an action of a linear algebraic group, with finitely many orbits. In particular, they have an open orbit and are therefore rational.
An important class of horospherical varieties is given by toric varieties, on which acts the group $\mathbb G_m^n$.
Similarly to toric varieties, horospherical varieties admit a combinatorial description.
Indeed, to a horospherical variety $Z$ and an ample divisor $D$ one can attach a polytope $Q_D$, called \textit{the moment polytope}, describing the geometry of the variety and of the action. 
For instance, some facets of $Q_D$ (those not contained in a wall of the dominant chamber) are in bijection with the divisors of the variety which are invariant by the group.

In \cite{HMMP} the second named author described a minimal model program for horospherical varieties completely in terms of moment polytopes by considering a one-parameter family of polytopes of the form $\{Q_{D+\epsilon K_Z}\}_{\epsilon\in\mathbb Q}$. For small values of $\epsilon$ the polytope $Q_{D+\epsilon K_Z}$ still defines the same variety $Z$, but, since $K_Z$ is not pseudoeffective, as $\epsilon$ grows, facets of the polytope start collapsing. Eventually, the dimension of the polytope drops, defining a fibration to a variety of smaller dimension.

This process, for a suitable choice of $D$, is a minimal model program and ends with a Mori fibre space $X\to S$.

The natural question arises then of the relation between two different horospherical Mori fibre spaces which are outcome of two MMP for two different ample divisors.

Two birational Mori fibre spaces, without any further structure, are connected by a \textit{Sarkisov program} by the cornerstone results of \cite{Cor95} and \cite{HMcK}.
A Sarkisov program is a sequence of diagrams, called \textit{links}, of one of the following forms

\[\begin{array}{cccc} 
I &  II & III & IV\\
\xymatrix{
X'\ar[d]\ar@{-->}[r]& Y\ar^{\psi}[d]\\
X\ar_{\phi}[d]& T\ar[ld]\\
S&
} & 
\xymatrix{
X'\ar[d]\ar@{-->}[r]& Y'\ar[d]\\
X\ar_{\phi}[d]& Y\ar^{\psi}[d]\\
S\ar@{=}[r]&T
} & 
\xymatrix{
X\ar_{\phi}[d]\ar@{-->}[r]& Y'\ar[d]\\
S\ar[rd]& Y\ar^{\psi}[d]\\
&T
} & 
\xymatrix{
X\ar_{\phi}[d]\ar@{-->}[rr]&& Y\ar^{\psi}[d]\\
S\ar[rd]&& T\ar[ld]\\
&R&
}
\end{array}\]

 where $W'\to W$ are extremal divisorial contractions, horizontal dashed arrows are isomorphisms in codimension 1
 and all the other arrows are extremal contractions.
 In type IV, we have two cases depending on wether $S\to R$ and $T\to R$ are fibrations (type IV\textsubscript m) or  small birational maps (type IV\textsubscript s).

If $X/S$ and $Y/T$ are two Mori fibre spaces carrying the action of a connected group and which are birational equivariantly with respect to the group, then by \cite{Flo20} there is an equivariant Sarkisov program, that is one in which all the arrows in the links are equivariant with respect to the group.

In this work, we aim to produce a Sarkisov program in the spirit of \cite{HMMP} and prove the following.

\begin{teo}\label{thm:mainthIntro}
Let $G$ be a connected reductive algebraic group.
Let $X$ and $Y$ be horospherical $G$-varieties which are $G$-equivariantly birational.
Assume moreover that there are Mori fibre space structures $X/S$ and $Y/T$.
Then, there is a horospherical Sarkisov program from $X/S$ to $Y/T$.
\end{teo}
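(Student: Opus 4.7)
The plan is to adapt the Hacon--McKernan strategy for constructing a Sarkisov program to the horospherical setting, replacing the general MMP at every step by its combinatorial incarnation from \cite{HMMP}. Concretely, I would begin by fixing $G$-equivariant ample $\mathbb{Q}$-divisors $A_S$ on $S$ and $A_T$ on $T$ and pulling them back to $H_X := \phi^{*}A_S$ on $X$ and $H_Y := \psi^{*}A_T$ on $Y$. Using the given $G$-equivariant birational map $X\dashrightarrow Y$ (and, if necessary, a common $G$-equivariant resolution), these two semiample divisors can be transported onto a single horospherical model so as to define a one-parameter family of $G$-stable divisor classes
\[D_\lambda \;=\; (1-\lambda)\,H_X + \lambda\,H_Y,\qquad \lambda\in[0,1],\]
interpolating between the structure coming from $X/S$ at $\lambda=0$ and that coming from $Y/T$ at $\lambda=1$. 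After a small ample perturbation if needed, each $D_\lambda$ admits a moment polytope in the sense of \cite{HMMP}.

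Next, for each fixed $\lambda$ the one-parameter family $\{Q_{D_\lambda+\epsilon K_X}\}_{\epsilon\geq0}$ of polytopes runs the horospherical MMP of \cite{HMMP} and terminates at a Mori fibre space $Z_\lambda\to W_\lambda$; by construction $Z_0\to W_0$ is $X\to S$ and $Z_1\to W_1$ is $Y\to T$. Viewing $Q_{D_\lambda+\epsilon K_X}$ as a function of the two parameters $(\lambda,\epsilon)$, its combinatorial type is piecewise-linear, so there are finitely many critical values $0=\lambda_0<\lambda_1<\cdots<\lambda_N=1$ at which the pattern of the MMP changes: either the identity of the first facet to collapse changes, or the order of subsequent facet collapses changes, or the facet responsible for the final dimension drop changes. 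Between two consecutive critical values the MMP produces the same horospherical Mori fibre space, and each transition across $\lambda_i$ is what should be matched with a Sarkisov link.

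The technical core of the argument is then a case analysis, at each $\lambda_i$, identifying the combinatorial transition with a link of type I, II, III, or IV, and verifying that the target of the $i$-th link equals the source of the $(i+1)$-th, so that the $\lambda_i$ assemble into a genuine program. Because every intermediate variety and every morphism is read off from a moment polytope via \cite{HMMP}, the resulting program is horospherical by construction, and termination is automatic from piecewise-linearity on the compact segment $[0,1]$. The main obstacle I expect is precisely this case analysis, and within it two delicate points: first, the treatment of facets lying on walls of the dominant chamber, which do not correspond to $G$-invariant prime divisors and hence contribute in a subtler way to the polytope combinatorics; and second, the identification of links of type IV, at which two facets of $Q_{D_{\lambda_i}+\epsilon K_X}$ collapse simultaneously in the limit, so that one must argue that a generic infinitesimal perturbation of $\lambda$ away from $\lambda_i$ resolves the degeneracy in the two prescribed ways and that the coincidence at $\lambda_i$ itself produces the correct common base $R$ of the two fibrations $S\to R$ and $T\to R$ appearing in a type IV link.
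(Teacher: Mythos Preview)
Your overall strategy matches the paper's: pass to a common horospherical resolution $Z$, choose ample divisors so that the HMMP from $Z$ scaled by each terminates in the prescribed Mori fibre space, form the two-parameter family $\{Q_{(1-\delta)A_X+\delta A_Y+\epsilon K_Z}\}$, and read Sarkisov links off the boundary of the resulting polyhedron in $\Qbb^2$.

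Two steps in your outline are nontrivial, and are where most of the paper's work goes.

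First, your claim that ``by construction $Z_0\to W_0$ is $X\to S$'' is not automatic. You propose $D_0$ to be a small ample perturbation of the pullback $\phi^*A_S$, transported to $Z$. On $X$ this is fine, but on the resolution $Z$ one must ensure that the HMMP first contracts the exceptional divisors of $Z\to X$ before reaching the Mori fibration; a perturbation of a pullback from the \emph{base} does not obviously do this. The paper does not use your Hacon--McKernan style divisors at all: instead it proves directly (Proposition~\ref{pro:resMMP}) that there is a full-dimensional euclidean open set $U_X\subset WDiv(Z)_\Qbb$ of ample divisors with the required HMMP-endpoint property, built as $\phi^*(D-\eta K_X)+\sum b_iE_i$ for $D$ in an open cone on $X$ and small $b_i$. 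The openness of $U_X$ and $U_Y$ is essential for what comes next.

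Second, and more seriously, your case analysis cannot be carried out without first imposing generality conditions on the pair $(A_X,A_Y)$. The paper devotes Subsection~\ref{ssec:gen} to showing that for $B,B'$ in a Zariski-open set the one-dimensional $\omega_I$'s lie on the same line only when $\omega_{I\cap J}$ is one-dimensional, zero-dimensional $\omega_I$'s coincide only when $\omega_{I\cap J}$ is zero-dimensional, and three one-dimensional $\omega_I$'s are concurrent only under a tight combinatorial constraint on $I,J,K$. Without these, a single critical $\lambda$ can witness several unrelated wall-crossings simultaneously, and the contractions $X^{\delta_0,\epsilon_0}\to Y^{\delta_1,\epsilon_1}$ need not be extremal. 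The identification of link types then rests on the relative-Picard-number computations of Propositions~\ref{prop:morphXY} and~\ref{prop:morphXZ} together with the ray ordering of Proposition~\ref{pro:rays}, all of which require those generality hypotheses. Your proposal should make explicit that $(A_X,A_Y)$ is chosen in the intersection of $U_X\times U_Y$ with this Zariski open set --- which is possible precisely because $U_X$ and $U_Y$ have nonempty euclidean interior.
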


What a horospherical Sarkisov program is will be made more precise in the last section where we state and prove Theorem \ref{thm:mainth}, which directly implies Theorem \ref{thm:mainthIntro}, but the rough idea is that if $X/S$ and $Y/T$ are outcomes of two MMP on the same $Z$, we consider the two-parameter family 
$$\{Q_{(1-\delta)D+\delta D'+\epsilon K_Z}\}_{(\delta,\epsilon)\in\mathbb Q^2}.$$
We consider the polyhedron $\Omega$ of dimension two of the $(\delta,\epsilon)$ such that 
$Q_{(1-\delta)D+\delta D'+\epsilon K_Z}$ is non-empty.
There is a polyhedral decomposition of $\Omega$ given by the points $(\delta,\epsilon)$
such that the corresponding moment polytope defines the same variety.
We show then that we can "read" the Sarkisov links on a part of the boundary of $\Omega$.

In the appendix are some examples of this horospherical Sarkisov program, showing how to effectively implement our methods.

\section{Horospherical varieties}\label{sec:horo}

We begin by recalling very briefly the Luna-Vust theory of horospherical embeddings and by setting the notation used in the rest of the paper. For more details on horospherical varieties, we refer the reader to \cite{Fanohoro}, and for basic results on Luna-Vust theory of spherical embeddings, we refer to \cite{knop}.

\noindent Let $G$ be connected reductive algebraic group.
A closed subgroup $H$ of $G$ is said to be {\it horospherical} if it contains the unipotent radical $U$ of a Borel subgroup $B$ of $G$. This is equivalent to say that (\cite[Prop. and Rem. 2.2]{Fanohoro}), there exists a parabolic subgroup $P$ containing $H$ such that the map $G/H\longrightarrow G/P$ is a torus fibration; or to say that there exists a parabolic subgroup $P$ containing $H$ such that $H$ is the kernel of finitely many characters of $P$.

\noindent Note that $B\subset P=N_G(H)$. 
We also fix a maximal torus $T$ of $B$. Then we denote by $S$ the set of simple roots of $(G,B,T)$. Also denote by $R$ the subset of $S$ of simple roots of $P$.
Let $X(T)$ (resp. $X(T)^+$) be the lattice of characters of $T$ (resp. the set of dominant characters). Similarly, we define $X(P)$ and $X(P)^+=X(P)\cap X(T)^+$. Note that the lattice $X(P)$ and the dominant chamber $X(P)^+$ are generated by the fundamental weights $\varpi_\alpha$ with $\alpha\in S\backslash R$ and the weights of the center of $G$.

We denote by $M$ the sublattice of $X(P)$ consisting of characters of $P$ vanishing on $H$. The rank of $M$ is called the \textit{rank of} $G/H$ and denoted by $n$. Let $N:=\Hom_\Zbb(M,\Zbb)$.

For any free lattice $\mathbb{L}$, we denote by $\mathbb{L}_\Qbb$ the $\Qbb$-vector space $\mathbb{L}\otimes_\Zbb\Qbb$. 

For any simple root $\alpha\in S\backslash R$, the restriction of the coroot $\alpha^\vee$ to $M$ is a point of $N$, which we denote by $\alpha^\vee_M$. 

\begin{defi}
A $G/H$-embedding is a couple $(X,x)$, where $X$ is a normal algebraic $G$-variety and $x$ a point of $X$ such that $G\cdot x$ is open in $X$ and isomorphic to $G/H$.

\noindent The variety $X$ is called a \textit{horospherical variety}.
\end{defi}

\noindent By abuse of notation, we often forget the point $x$, so that we call $X$ a $G/H$-embedding.
But there are several non-isomorphic $G/H$-embeddings $(X,x)$ for the same horospherical variety $X$. Two points $x_1$ and $x_2$ differ by an element of the torus $P/H$, which acts on the right on $G/H$. 
Similarly to toric varieties (which are $(\Cbb^*)^n$-embeddings with the above defintion), $G/H$-embeddings are classified by colored fans in $N_\Qbb$.
For example, for the toric variety $\Pbb^2$ we have different non-isomorphic $(\Cbb^*)^2$-embeddings, whose fans are the same up to the action of $\operatorname{SL_2}(\Zbb)$. In this paper, since we define horospherical varieties by their colored fans, or equivalently by some of their moment polytopes, we are
implicitly fixing a $G/H$-embedding up to isomorphism.

\begin{defi}\label{defi:fan}
\begin{enumerate}
\item A colored cone of $N_\Qbb$ is an couple $(\mathcal{C}, \mathcal{F})$
 where $\mathcal{C}$ is a convex cone of $N_\Qbb$ and $\mathcal{F}$ is a set of colors (called the set of colors of the colored cone), such that
\begin{enumerate}[(i)]
\item $\mathcal{C}$ is generated by finitely many elements of $N$ and contains $\{\alpha^\vee_M\,\mid\,\alpha\in\mathcal{F}\}$,
\item $\mathcal{C}$ does not contain any line and
$\mathcal{F}$ does not contain any $\alpha$ such that $\alpha^\vee_M$ is zero.
\end{enumerate}
\item A colored face of a colored cone $(\mathcal{C}, \mathcal{F})$ is a couple $(\mathcal{C}', \mathcal{F}')$ such that $\mathcal{C}'$ is a face of $\mathcal{C}$ and $\mathcal{F}'$ is the set of $\alpha\in\mathcal{F}$ satisfying $\alpha^\vee_M\in\mathcal{C}'$.
A colored fan is a finite set $\mathbb{F}$ of colored cones such that
\begin{enumerate}[(i)]
\item any colored face of a colored cone of $\mathbb{F}$ is in $\mathbb{F}$,
\item and any element of $N_\Qbb$ is in the interior of at most one colored cone of $\mathbb{F}$.
\end{enumerate}\end{enumerate}
\end{defi}

The main result of Luna-Vust Theory of spherical embeddings is the one-to-one correspondence between colored fans and isomorphic classes of $G/H$-embeddings (see for example \cite{knop}). It generalizes the classification of toric varieties in terms of fans, case where $G=(\Cbb^*)^n$ and $H=\{1\}$. We will rewrite this result in Section~\ref{sec:divpoly} for projective horospherical varieties in terms of polytopes and describe explicitly the correspondence.

%


\noindent If $X$ is a $G/H$-embedding, we denote by $\mathbb{F}_X$ the {\it colored fan} of $X$ in $N_\Qbb$ and we denote by $\mathcal{F}_X$ the subset $\cup_{(\mathcal{C},\mathcal{F})\in\mathbb{F}_X}\mathcal{F}$ of $S\backslash R$, which we call the {\it set of colors} of $X$.



\smallskip

We now recall the description of divisors of horospherical varieties.

\noindent We denote by $X_1,\dots,X_r$ the $G$-stable irreducible divisors of a $G/H$-embedding $X$. For any $i\in\{1,\dots,r\}$, we denote by $x_i$ the primitive element in $N$ of the colored edge associated to $X_i$. 
The $B$-stable and not $G$-stable irreducible divisors of a $G/H$-embedding $X$ are the closures in $X$ of $B$-stable irreducible divisors of $G/H$, which are the inverse images by the torus fibration $G/H\longrightarrow G/P$ of the 
Schubert divisors of the flag variety $G/P$.
The $B$-stable irreducible divisors of $G/H$ are indexed by simple roots of $S\backslash R$, we write them $D_\alpha$ with $\alpha\in S\backslash R$.
 

We can now recall the characterization of Cartier, $\Qbb$-Cartier and ample divisors of horospherical varieties due to M.~Brion in the more general case of spherical varieties (\cite{briondiv}). This will permit to define a polytope associated to a divisor of a horospherical variety.

\begin{teo}(Section 3.3, \cite{briondiv}) \label{th:divcrit}
Let $G/H$ be a horospherical homogeneous space. Let $X$ be a $G/H$-embbeding.  
Then every divisor of $X$ is equivalent to a linear combination of $X_1,\dots,X_r$ and $D_\alpha$ with $\alpha\in S\backslash R$.
Now, let $D=\sum_{i=1}^r a_i X_i +\sum_{\alpha\in S\backslash R} a_\alpha D_\alpha$ be a $\Qbb$-divisor of $X$.
\begin{enumerate}
\item $D$ is $\Qbb$-Cartier if and only if there exists a piecewise linear function $h_D$, linear on each colored cone of $\mathbb{F}_X$, such that for any $i\in\{1,\dots,r\}$, $h_D(x_i)=a_i$ and for any $\alpha\in\mathcal{F}_X$, $h_D(\alpha^\vee_M)=a_\alpha$.
\item Suppose that $D$ is a divisor (i.e. $a_1,\dots,a_r$ and the $a_\alpha$ with $\alpha\in S\backslash R$ are in $\Zbb$). Then $D$ is Cartier if moreover, for any colored cone $(\mathcal{C},\mathcal{F})$ of $\mathbb{F}_X$, the linear function $(h_D)_{|\mathcal{C}}$, can be define as an element of $M$ (instead of $M_\Qbb$ for $\Qbb$-Cartier divisors).
\item Suppose that $D$ is $\Qbb$-Cartier. Then $D$ is ample, resp. nef if and only if the piecewise linear function $h_D$ is strictly convex, resp. convex, and for any $\alpha\in (S\backslash R)\backslash\mathcal{F}_X$, we have $h_D(\alpha^\vee_M)<a_\alpha$, resp. $\leq a_\alpha$.
\item Suppose that $D$ is Cartier. 
Let $\tilde{Q}_D$ be the polytope in $M_\Qbb$ defined by the following inequalities, where $\chi\in M_\Qbb$: for any colored cone $(\mathcal{C},\mathcal{F})$ of $\mathbb{F}_X$, $(h_D)+\chi\geq 0$ on $\mathcal{C}$, and for any $\alpha\in(S\backslash R)\backslash\mathcal{F}_X$, $\chi(\alpha^\vee_M)+a_\alpha\geq 0$.
Note that here the weight of the canonical section of $D$ is $v^0:=\sum_{\alpha\in S\backslash R}a_\alpha\varpi_\alpha$. Then the $G$-module $H^0(X,D)$ is the direct sum, with multiplicity one, of the irreducible $G$-modules of highest weights $\chi+v^0$ with $\chi$ in $\tilde{Q}_D\cap M$.    
\end{enumerate}
\end{teo}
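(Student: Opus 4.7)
The plan is to follow the standard strategy for spherical embeddings, specialised to the horospherical setting. For the first assertion, any nonzero rational function on $X$ can be multiplied by a suitable $B$-semi-invariant (which exists since $B$ has an open orbit in $X$) so as to become itself a $B$-eigenfunction; its divisor is then $B$-stable, and the $B$-stable prime divisors of $X$ are exactly the $X_i$ together with the $D_\alpha$ for $\alpha\in S\backslash R$.

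For (1) and (2), I would exploit the local structure theorem for spherical embeddings: to each colored cone $(\mathcal{C},\mathcal{F})\in\mathbb{F}_X$ one associates a $B$-stable affine open subset $X_{\mathcal{C},\mathcal{F}}\subset X$, and the $G$-translates of these subsets cover $X$. A $B$-stable divisor $D$ is principal on $X_{\mathcal{C},\mathcal{F}}$ exactly when there is a $B$-eigenfunction $f_\chi\in\Cbb(X)^*$ of weight $\chi\in M$ whose divisor agrees with $D$ on that open set. Since the valuation of $f_\chi$ along $X_i$ equals $\langle\chi,x_i\rangle$ and along $D_\alpha$ equals $\langle\chi,\alpha^\vee_M\rangle$, matching with the coefficients gives the conditions $h_D(x_i)=a_i$ and $h_D(\alpha^\vee_M)=a_\alpha$ for $\alpha\in\mathcal{F}$. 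Gluing the local $\chi$'s across adjacent colored cones produces the piecewise linear function $h_D$; the integral Cartier case additionally requires each local $\chi$ to lie in $M$ rather than merely $M_\Qbb$.

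For (3), I would test ampleness on $B$-invariant curves. These come in two families: curves associated to walls shared by two maximal colored cones of $\mathbb{F}_X$, on which the condition $D\cdot C>0$ translates into strict convexity of $h_D$ across the wall; and curves associated to colors $\alpha\in(S\backslash R)\backslash\mathcal{F}_X$ not appearing in any colored cone, on which the intersection number is a positive multiple of $a_\alpha-h_D(\alpha^\vee_M)$. Demanding strict positivity in both cases gives precisely the stated conditions, and the nef criterion is the non-strict analogue.

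Finally, for (4), I would decompose $H^0(X,D)$ into $G$-isotypic components. Since $G/H$ is horospherical, each irreducible $G$-summand appears with multiplicity at most one; the summand of highest weight $\chi+v^0$ is nonzero exactly when $f_\chi$ times the canonical section of $D$ extends to a regular section on all of $X$. This effectivity condition $\operatorname{div}(f_\chi)+D\geq 0$ translates, on each colored cone $\mathcal{C}$, into $h_D+\chi\geq 0$ on $\mathcal{C}$, and, along each $D_\alpha$ with $\alpha\notin\mathcal{F}_X$, into $\chi(\alpha^\vee_M)+a_\alpha\geq 0$; these are exactly the inequalities defining $\tilde{Q}_D$. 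The main technical subtlety throughout is keeping careful track of the shift by the weight $v^0$ of the canonical section, and verifying in (3) and (4) that testing on $B$-invariant curves, respectively on $B$-eigenfunctions, actually suffices.
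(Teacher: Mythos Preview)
The paper does not actually prove this theorem: it is stated with the attribution ``(Section~3.3, \cite{briondiv})'' and no proof environment follows. The result is quoted from Brion's work on divisors of spherical varieties and used as a black box throughout.

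Your outline is a faithful sketch of the argument in Brion's original paper, specialised to the horospherical case. The reduction to $B$-stable divisors via $B$-eigenfunctions, the local structure theorem producing the affine patches $X_{\mathcal{C},\mathcal{F}}$ on which Cartier means principal with a $B$-eigenfunction of some weight $\chi$, the ampleness test on $B$-invariant curves (walls and colors outside $\mathcal{F}_X$), and the multiplicity-free decomposition of $H^0(X,D)$ coming from the horospherical hypothesis are exactly the ingredients Brion uses. One small caution in part~(3): the sufficiency direction (convexity of $h_D$ together with the color inequalities implies nef/ample) requires knowing that the $B$-invariant curves generate the cone of effective curves, which in the spherical case is a separate input (also due to Brion); your sketch implicitly assumes this when you say ``testing on $B$-invariant curves \dots\ actually suffices,'' and it would be worth flagging that this is where the real work lies.
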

In all the paper, a divisor of a horospherical variety is always supposed to be $B$-stable, i.e. of the form $\sum_{i=1}^r a_i X_i +\sum_{\alpha\in S\backslash R} a_\alpha D_\alpha$.

\begin{cor}\label{cor:pic}
A $G/H$-embbeding $X$ is $\Qbb$-factorial if and only if all the cones in $\mathbb{F}_X$ are simplicial and for any $\alpha\in \mathcal{F}_X$, $\alpha^\vee_M$ generates a ray of $\mathbb{F}_X$.

\noindent Moreover, in that case, 
the Picard number of $X$ is the number of their $B$-stable prime divisors minus the rank $n$ (Equality (4.1.1) \cite{Fanohoro}).
\end{cor}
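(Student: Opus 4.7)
My plan is to derive the corollary directly from Theorem \ref{th:divcrit}(1), which characterizes $\Qbb$-Cartier divisors via the existence of a piecewise linear function $h_D$ on $\mathbb{F}_X$. Since $X$ is $\Qbb$-factorial exactly when every Weil divisor is $\Qbb$-Cartier, and since every divisor is linearly equivalent to a $B$-stable one of the form $D=\sum_{i=1}^r a_i X_i+\sum_{\alpha\in S\setminus R}a_\alpha D_\alpha$ with coefficients ranging independently over $\Qbb$, the task is to identify when the constraints $h_D(x_i)=a_i$ and $h_D(\alpha^\vee_M)=a_\alpha$ (for $\alpha\in\mathcal{F}_X$) are simultaneously solvable for every choice of $(a_i,a_\alpha)$.

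I would analyze this one colored cone at a time. Fix $(\mathcal{C},\mathcal{F})\in\mathbb{F}_X$. A linear form on $\mathrm{span}(\mathcal{C})$ can realize arbitrary prescribed values on a finite set of vectors if and only if those vectors are linearly independent. Applying this to the primitive generators $x_i$ of the rays of $\mathcal{C}$ together with the vectors $\alpha^\vee_M$ for $\alpha\in\mathcal{F}$, and using that the $x_i$'s already span $\mathrm{span}(\mathcal{C})$, forces $\mathcal{C}$ to be simplicial (the $x_i$'s form a basis of $\mathrm{span}(\mathcal{C})$) and each $\alpha^\vee_M$ with $\alpha\in\mathcal{F}$ to coincide up to a positive scalar with one of the $x_i$'s, i.e.\ to generate a ray of $\mathbb{F}_X$. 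Conversely, assuming both conditions, I would build $h_D$ cone by cone from its prescribed values on ray generators. Compatibility on a common face $\tau$ of two cones then follows because $\tau$ itself is simplicial, so the restriction of $h_D$ to $\tau$ is uniquely determined by its values on the primitive generators of the rays of $\tau$, and these are shared by both cones.

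For the Picard number formula I would invoke the standard presentation of the class group of a horospherical variety: the $B$-stable prime divisors $X_1,\dots,X_r$ and $D_\alpha$ for $\alpha\in S\setminus R$ generate $\mathrm{Cl}(X)$, and the relations come from the principal $B$-semi-invariant divisors $\mathrm{div}(\chi)=\sum_i \chi(x_i)X_i+\sum_\alpha \chi(\alpha^\vee_M)D_\alpha$ with $\chi\in M$. Since this map $M\to\Zbb^{\{B\text{-stable prime divisors}\}}$ is injective and $M$ has rank $n$, and since $\mathrm{Pic}(X)_\Qbb=\mathrm{Cl}(X)_\Qbb$ in the $\Qbb$-factorial case, the Picard number equals the number of $B$-stable prime divisors minus $n$.

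The step I expect to require the most care is the necessity of the color condition: one must show that if $\alpha\in\mathcal{F}_X$ has $\alpha^\vee_M$ not on a ray of $\mathbb{F}_X$, then there really is a Weil divisor $D$ whose coefficient $a_\alpha$ contradicts the value $h_D(\alpha^\vee_M)$ forced by the $a_i$'s. This is ultimately a tautology about the independent variation of the coefficients, but it is the point where the distinction between $G$-stable divisors and colors is genuinely used, and where one must avoid the trap of concluding that simpliciality alone is enough.
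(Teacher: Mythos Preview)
Your derivation from Theorem~\ref{th:divcrit}(1) is exactly how the corollary is meant to be obtained; the paper itself gives no proof beyond stating it as a corollary and citing \cite{Fanohoro} for the Picard number formula. The cone-by-cone linear-independence analysis and the exact-sequence computation of $\rho(X)$ are standard and correct.

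One notational slip to fix: in the paper's conventions the $x_i$ are the primitive generators of rays corresponding to the $G$-stable divisors only, not of \emph{all} rays of $\mathcal{C}$, so your claim that ``the $x_i$'s already span $\mathrm{span}(\mathcal{C})$'' is not literally true in that notation. What does hold is that the union $\{x_i:x_i\in\mathcal{C}\}\cup\{\alpha^\vee_M:\alpha\in\mathcal{F}\}$ spans $\mathrm{span}(\mathcal{C})$, because every ray of a colored cone is either uncolored (hence one of the $x_i$) or colored (hence contains some $\alpha^\vee_M$ with $\alpha\in\mathcal{F}$). With this set in place of the $x_i$'s alone, your linear-independence argument goes through verbatim and yields precisely simpliciality together with the condition that each $\alpha^\vee_M$ lies on a ray.
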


In what follows we denote by $WDiv(X)_\Qbb$ (resp. $WDiv_0(X)_\Qbb$) the vector space of $B$-invariant (linearly equivalent to zero)
$\Qbb$-Weil divisors on a horospherical variety $X$ 
and by $Amp(X)$ (resp. $Nef(X)$) the cones in $WDiv(X)_\Qbb$ of ample (resp. nef) divisors.
Both cones are polyhedral and, if $X$ is $\Qbb$-factorial, full-dimensional.

\subsection{Projective horospherical varieties and polytopes}\label{sec:divpoly}

In this section we recall how many properties of $G/H$-embeddings can be formulated in terms of moment polytopes.
Theorem~\ref{th:divcrit} gives the following 
\begin{prop}(Corollary 2.8, \cite{HMMP})\label{cor:divcrit}
Let $X$ be a projective $G/H$-embedding and $D=\sum_{i=1}^r a_i X_i +\sum_{\alpha\in S\backslash R} a_\alpha D_\alpha$ be a $\Qbb$-divisor of $X$. Suppose that $D$ is $\Qbb$-Cartier and ample.
\begin{enumerate}
\item\label{divcrit1} The polytope $\tilde{Q}_D$ defined in Theorem~\ref{th:divcrit} is of maximal dimension in $M_\Qbb$ and we have $$\tilde{Q}_D=\{m\in M_\Qbb\,\mid\,\langle m,x_i\rangle\geq -a_i,\,\forall i\in\{1,\dots,r\}\mbox{ and }\langle m,\alpha^\vee_M\rangle\geq -a_\alpha,\,\forall \alpha\in\mathcal{F}_X\}.$$
\item\label{divcrit2} Let $v^0:=\sum_{\alpha\in S\backslash R}a_\alpha\varpi_\alpha$. The polytope $Q_D:=v^0+\tilde{Q}_D$ is contained in the dominant chamber $X(P)^+$ of $X(P)$ and it is not contained in any wall of the dominant chamber. 
\item\label{divcrit3} Let $(\mathcal{C},\mathcal{F})$ be a maximal colored cone of $\mathbb{F}_X$, then the element $v^0-(h_D)_{|\mathcal{C}}$ of $M_\Qbb$ is a vertex of $Q_D$. In particular, if $D$ is Cartier, then $Q_D$ is a lattice polytope (i.e. has its vertices in $v^0+M$).
\item\label{divcrit4} Conversely, let $v$ be a vertex of $Q_D$. We define $\mathcal{C}_v$ to be the cone of $N_\Qbb$ generated by inward-pointing normal vectors of the facets of $Q_D$ containing $v$. 
We set $\mathcal{F}_v=\{\alpha\in S\backslash R\,\mid\,v\, \text{the corresp. wall of the dominant chamber}\}$. Then $(\mathcal{C}_v,\mathcal{F}_v)$ is a maximal colored cone of $\mathbb{F}_X$. 
\end{enumerate}
\end{prop}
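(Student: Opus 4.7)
The plan is to unpack Theorem~\ref{th:divcrit} item by item, translating the inequalities of part (4) of that theorem into the dual polytopal language needed here.

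\textbf{Part (1).} I would start from the description of $\tilde Q_D$ in Theorem~\ref{th:divcrit}(4) and split its inequalities into two families. Since $h_D$ is linear on each colored cone $\mathcal C$, the condition $h_D+\chi\geq 0$ on $\mathcal C$ reduces to inequalities at the generators of $\mathcal C$: namely the $x_i$ of the rays of $\mathcal C$ together with the $\alpha^\vee_M$ for $\alpha\in\mathcal F$. Using $h_D(x_i)=a_i$ and $h_D(\alpha^\vee_M)=a_\alpha$ for $\alpha\in\mathcal F$ and taking the union over $(\mathcal C,\mathcal F)\in\mathbb F_X$, one recovers precisely the inequalities stated in part (1). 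The remaining inequalities $\chi(\alpha^\vee_M)+a_\alpha\geq 0$ for $\alpha\in (S\setminus R)\setminus\mathcal F_X$ are redundant under ampleness: Theorem~\ref{th:divcrit}(3) gives $h_D(\alpha^\vee_M)<a_\alpha$, and together with the inequalities on a cone of $\mathbb F_X$ containing $\alpha^\vee_M$ this forces $\chi(\alpha^\vee_M)>-a_\alpha$. Maximal dimension of $\tilde Q_D$ follows from the strict convexity of $h_D$ granted by ampleness.

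\textbf{Part (2).} The dominant chamber $X(P)^+_\Qbb$ is cut out by $\langle\cdot,\alpha^\vee\rangle\geq 0$ for $\alpha\in S\setminus R$, and the definition of $v^0$ together with the defining property of the fundamental weights gives $\langle v^0,\alpha^\vee\rangle=a_\alpha$. Thus $\chi+v^0$ is dominant iff $\chi(\alpha^\vee_M)+a_\alpha\geq 0$, which part (1) and the redundancy argument above already guarantee. Maximal dimension of $Q_D$ prevents it from being contained in any wall.

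\textbf{Parts (3) and (4).} These are the two directions of the normal fan duality. For (3), given a maximal colored cone $(\mathcal C,\mathcal F)\in\mathbb F_X$, I consider the linear form $(h_D)_{|\mathcal C}\in M_\Qbb$. Strict convexity of $h_D$, together with $h_D(\alpha^\vee_M)<a_\alpha$ for $\alpha\notin\mathcal F_X$, show that $v^0-(h_D)_{|\mathcal C}$ saturates every facet inequality of $Q_D$ associated to a ray of $\mathcal C$ or a color $\alpha\in\mathcal F$ and strictly satisfies all the others; hence it is a vertex, and lies in $v^0+M$ whenever $D$ is Cartier, by Theorem~\ref{th:divcrit}(2). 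For (4), at a vertex $v$ of $Q_D$ one lists the active facet inequalities; part (1) identifies each of them with either a ray generator $x_i$ or a color generator $\alpha^\vee_M$ with $\alpha\in\mathcal F_X$, and the walls of the dominant chamber through $v$ recover $\mathcal F_v$ via $\langle v^0,\alpha^\vee\rangle=a_\alpha$. One then verifies that $(\mathcal C_v,\mathcal F_v)$ satisfies the axioms of Definition~\ref{defi:fan} for a maximal colored cone of $\mathbb F_X$, by reversing the argument of (3) and using that $v$ is a vertex of a full-dimensional polytope.

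The main bookkeeping obstacle is ensuring, in parts (3) and (4), that the color set $\mathcal F_v$ defined via walls of the dominant chamber matches exactly the color data of the colored cone normal to $v$. This uses simultaneously the identification $\langle v^0,\alpha^\vee\rangle=a_\alpha$, the redundancy for $\alpha\notin\mathcal F_X$ established in part (1), and the characterisation $\mathcal F_X=\bigcup_{(\mathcal C,\mathcal F)\in\mathbb F_X}\mathcal F$.
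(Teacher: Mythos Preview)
The paper does not prove this proposition at all: it is quoted verbatim as Corollary~2.8 of \cite{HMMP} and stated without argument, so there is no ``paper's own proof'' to compare against. Your sketch is therefore not a reconstruction of anything in the present paper but rather an attempt to re-derive the cited result directly from Theorem~\ref{th:divcrit}.

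As such a derivation, your outline is essentially sound. The reduction in part~(1) is correct once one observes that, $X$ being projective, the fan $\mathbb F_X$ is complete, so every $\alpha^\vee_M$ with $\alpha\notin\mathcal F_X$ lies in some colored cone and the redundancy argument goes through; you use this implicitly and it would be worth making explicit. In part~(2) you should also note that for $\alpha\in\mathcal F_X$ the facet inequality $\langle\chi,\alpha^\vee_M\rangle\geq -a_\alpha$ becomes, after translation by $v^0$, exactly the wall inequality $\langle\cdot,\alpha^\vee\rangle\geq 0$; this is the mechanism that in part~(4) makes ``$v$ lies on the wall for $\alpha$'' equivalent to ``the facet for $\alpha^\vee_M$ is active at $v-v^0$'', and hence forces $\mathcal F_v\subseteq\mathcal F_X$ automatically. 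Finally, in part~(4) you assert that $(\mathcal C_v,\mathcal F_v)$ is a maximal colored cone of $\mathbb F_X$ ``by reversing the argument of (3)''; strictly speaking one needs completeness of the fan again to know that the normal cone at $v$ actually occurs in $\mathbb F_X$ rather than merely being a colored cone abstractly. These are bookkeeping points rather than gaps.
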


The polytope $Q_D$ is called the {\it moment polytope} of $(X,D)$ (or of $D$), and the polytope $\tilde{Q}_D$ the {\it pseudo-moment} polytope of $(X,D)$ (or of $D$).

\noindent The projective $G/H$-embeddings are classifed in terms of $G/H$-polytopes (defined below in Definition~\ref{def:G/H-eq}), and we can give an explicit construction of a $G/H$-embedding from a $G/H$-polytope.

\begin{defi}\label{def:G/H-eq}
Let $Q$ be a polytope in $X(P)^+_\Qbb$ (not necessarily a lattice polytope). We say that $Q$ is a $G/H$-polytope, if its direction is $M_\Qbb$ and if it is contained in no wall $W_{\alpha,P}$ with $\alpha\in S\backslash R$.

Let $Q$ and $Q'$ be two $G/H$-polytopes in $X(P)^+_\Qbb$. Consider any polytopes $\tilde{Q}$ and $\tilde{Q'}$ in $M_\Qbb$ obtained by translations from $Q$ and $Q'$ respectively.   We say that $Q$ and $Q'$ are equivalent $G/H$-polytopes if the following conditions are satisfied.
\begin{enumerate}
\item\label{G/H-eq0} There exist an integer $j$ and $2j$ affine half-spaces $\mathcal{H}_1^+,\dots,\mathcal{H}_j^+$ and $\mathcal{H'}_1^+,\dots,\mathcal{H'}_j^+$ of $M_\Qbb$ (respectively delimited by the affine hyperplanes $\mathcal{H}_1,\dots,\mathcal{H}_j$ and $\mathcal{H'}_1,\dots,\mathcal{H'}_j$) such that $\tilde{Q}$ is the intersection of the $\mathcal{H}_i^+$, $\tilde{Q'}$ is the intersection of the $\mathcal{H'}_i^+$, and for all $i\in\{1,\dots,j\}$, $\mathcal{H}_i^+$ is the image of $\mathcal{H'}_i^+$ by a translation.
\item\label{G/H-eq1} With the notation of \ref{G/H-eq0}, for all subset $J$ of $\{1,\dots,j\}$, the intersections $\cap_{i\in J}\mathcal{H}_i\cap Q$ and $\cap_{i\in J}\mathcal{H'}_i\cap Q'$ have the same dimension.
\item\label{G/H-eq2} $Q$ and $Q'$ intersect exactly the same walls of the dominant chamber. 
\end{enumerate}
\end{defi}

Remark that this definition does not depend on the choice of $\tilde{Q}$ and $\tilde{Q'}$.
We now give a classification of projective horospherical varieties in terms of polytopes.

\begin{prop}(Proposition 2.10, \cite{HMMP})\label{prop:classprojembpoly}
The correspondence between moment polytopes and colored fans gives a bijection between the set of equivalence classes of $G/H$-polytopes and (isomorphism classes of) projective $G/H$-embeddings.

\end{prop}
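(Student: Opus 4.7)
The plan is to combine the Luna-Vust classification of projective $G/H$-embeddings by colored fans with the explicit moment polytope construction of Proposition \ref{cor:divcrit}, and verify that this construction descends to a bijection on equivalence classes. Since the classification of $G/H$-embeddings by colored fans is already available, it suffices to exhibit a bijection between equivalence classes of $G/H$-polytopes and isomorphism classes of pairs (projective fan, ample class), and to read off on both sides that the construction depends only on the fan.

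\textbf{Forward direction.} Starting with a projective $G/H$-embedding $X$ and an ample $\Qbb$-Cartier divisor $D$, Proposition \ref{cor:divcrit} produces a polytope $Q_D$ in $X(P)^+_\Qbb$. I would check it is a $G/H$-polytope in the sense of Definition \ref{def:G/H-eq}: the half-space description in Proposition \ref{cor:divcrit}(\ref{divcrit1}) shows $\tilde Q_D$ has full direction $M_\Qbb$, and the ampleness condition in Theorem \ref{th:divcrit}(3) forces $Q_D$ not to lie in any wall $W_{\alpha,P}$ with $\alpha\notin \mathcal F_X$, while by construction $Q_D$ meets exactly the walls indexed by $\mathcal F_X$. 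Next, I would show that any two ample divisors $D,D'$ on the same $X$ yield equivalent $G/H$-polytopes: the common set of facets of $\tilde Q_D$ and $\tilde Q_{D'}$ is indexed by the rays $x_1,\dots,x_r$ and the colors of $\mathcal F_X$, with matched inward normals, so only the translation of each half-space changes. This verifies (\ref{G/H-eq0}); the face incidences (\ref{G/H-eq1}) are controlled by the shared combinatorics of $\mathbb F_X$ (a face of $\tilde Q_D$ of a given codimension is governed by the corresponding colored face of $\mathbb F_X$, and this combinatorial skeleton is independent of $D$); (\ref{G/H-eq2}) follows from the characterization of $\mathcal F_X$.

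\textbf{Backward direction.} Given a $G/H$-polytope $Q$, I would imitate Proposition \ref{cor:divcrit}(\ref{divcrit4}): to each vertex $v$ of $Q$, attach the cone $\mathcal C_v \subset N_\Qbb$ generated by the inward-pointing normals of facets of $Q$ at $v$, together with the set $\mathcal F_v$ of colors $\alpha\in S\setminus R$ whose wall $W_{\alpha,P}$ contains $v$. Taking colored faces of all such $(\mathcal C_v,\mathcal F_v)$ produces a candidate colored fan $\mathbb F_Q$. I would then check Definition \ref{defi:fan}: the inclusion $\alpha^\vee_M\in\mathcal C_v$ for $\alpha\in\mathcal F_v$ follows from $W_{\alpha,P}$ being a supporting hyperplane at $v$; strict convexity of each $\mathcal C_v$ uses that $\tilde Q$ has full dimension $\dim M_\Qbb$; the avoidance of colors with $\alpha^\vee_M=0$ comes from the requirement in Definition \ref{def:G/H-eq} that $Q$ not be contained in a wall; and the fan axioms are formal consequences of the normal-fan construction. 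The resulting $G/H$-embedding $X_Q$ carries a natural divisor $D$ whose support function on $\mathcal C_v$ is given by $v^0-v$, and by design $Q_D=Q$; ampleness is equivalent to the strict convexity of this piecewise-linear function, which is automatic from $Q$ being the intersection of its supporting half-spaces.

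\textbf{Descent to equivalence classes and main difficulty.} That the backward construction depends only on the equivalence class follows directly: conditions (\ref{G/H-eq0})--(\ref{G/H-eq2}) of Definition \ref{def:G/H-eq} encode precisely that the inward normals, the face incidences at vertices, and the colors $\mathcal F_v$ are the same for $Q$ and $Q'$, i.e.\ that $\mathbb F_Q=\mathbb F_{Q'}$. The forward construction descends thanks to the previous paragraph. The main obstacle I would expect is the backward direction, and in particular making the equivalence in Definition \ref{def:G/H-eq} tight enough to ensure that the combinatorial data of $\mathbb F_Q$ is really invariant: one must prevent pathologies where two distinct colored fans give rise to the same polytope up to translation but differ in which facets are genuine (versus degenerate) or in which walls intersect the polytope non-trivially. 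This is exactly what conditions (\ref{G/H-eq1}) and (\ref{G/H-eq2}) rule out, and verifying that these are the precise conditions needed to recover $\mathbb F_Q$ is the delicate combinatorial step of the argument.
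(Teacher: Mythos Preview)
The paper does not give a proof of this proposition: it is stated as a citation of \cite[Proposition~2.10]{HMMP}, and the surrounding material (Proposition~\ref{cor:divcrit}, Proposition~\ref{prop:veryample}) is likewise quoted from that source. So there is no ``paper's own proof'' to compare against; you have supplied what the paper omits.

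Your outline is a correct reconstruction of the standard argument. The forward map $X\mapsto [Q_D]$ is well-defined because the normal fan of $\tilde Q_D$ together with the wall-incidence data of $Q_D$ recovers $\mathbb F_X$ (Proposition~\ref{cor:divcrit}\ref{divcrit3}--\ref{divcrit4}), and this data is exactly what the equivalence relation in Definition~\ref{def:G/H-eq} remembers. The backward map is the normal-fan-with-colors construction you describe, and the two are mutually inverse by design. One small point: when you argue that $Q_D$ is not contained in any wall $W_{\alpha,P}$, the case $\alpha\in\mathcal F_X$ also needs to be covered; this follows simply because $\tilde Q_D$ has full dimension $n$ in $M_\Qbb$, so $Q_D$ cannot lie in any hyperplane of $X(P)_\Qbb$ whose normal restricts nontrivially to $M_\Qbb$. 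With that adjustment your sketch is complete and matches the argument one finds in \cite{HMMP}.
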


\begin{prop}(Proposition~2.11 and Remark~2.12, \cite{HMMP})\label{prop:veryample}
Let $Q$ be a $G/H$-polytope. Up to multiplying $Q$ by an integer, there exists a very ample Cartier divisor $D$ of the corresponding $G/H$-embedding $X$ such that $Q=Q_D$.  More precisely, $X$ is isomorphic to the closure of the $G$-orbit $G\cdot [\sum_{\chi\in (v^0+M)\cap Q}v_\chi]$ in $\Pbb(\oplus_{\chi\in (v^0+M)\cap Q}V(\chi))$.
\end{prop}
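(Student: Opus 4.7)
The plan is to exploit the classification of \Cref{prop:classprojembpoly} together with the description of divisors and sections provided by \Cref{th:divcrit} and \Cref{cor:divcrit}. Let $X$ denote the projective $G/H$-embedding corresponding to the equivalence class of $Q$. My strategy is first to produce, from the $G/H$-polytope $Q$, a candidate ample $\mathbb Q$-Cartier divisor $D_0$ with $Q_{D_0}=Q$, then clear denominators to make it Cartier, and finally pass to a further multiple to obtain very ampleness and identify the resulting embedding with the claimed one.

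First I would write $\tilde Q = Q - v^0 \subset M_\Qbb$ in half-space form. Every facet of $\tilde Q$ has either a primitive inward normal $x_i$ (for a $G$-stable divisor $X_i$, once a choice of representatives in $\mathbb F_X$ is fixed) or is of the shape $\langle m,\alpha^\vee_M\rangle = -a_\alpha$ for $\alpha \in \mathcal F_X$. This reads off rational coefficients $a_i$ and $a_\alpha$; extending by arbitrary rational $a_\alpha$ for $\alpha\in(S\setminus R)\setminus\mathcal F_X$ subject to $-a_\alpha < \langle m,\alpha^\vee_M\rangle$ on $\tilde Q$ (possible since $Q$ does not meet those walls, by the $G/H$-polytope hypothesis and condition (3) of \Cref{def:G/H-eq}), I obtain a $\mathbb Q$-divisor $D_0=\sum a_i X_i+\sum_{\alpha\in S\setminus R}a_\alpha D_\alpha$. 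Define $h_{D_0}$ on each maximal colored cone $(\mathcal C,\mathcal F)$ of $\mathbb F_X$ to be the (unique) linear functional taking value $v^0-v$ at the generators of $\mathcal C$, where $v$ is the vertex of $Q$ corresponding to $(\mathcal C,\mathcal F)$ via \Cref{cor:divcrit}(\ref{divcrit4}). Consistency on intersections of cones, and the required values $h_{D_0}(x_i)=a_i$, $h_{D_0}(\alpha^\vee_M)=a_\alpha$ for $\alpha\in\mathcal F$, follow from the fact that the normal fan of $\tilde Q$ is precisely (the support of) $\mathbb F_X$. Thus $D_0$ is $\mathbb Q$-Cartier by \Cref{th:divcrit}(1); strict convexity of $h_{D_0}$ holds because distinct maximal cones give distinct vertices, and the inequality at non-colored $\alpha$ holds by construction, so $D_0$ is ample by \Cref{th:divcrit}(3). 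By \Cref{cor:divcrit}(\ref{divcrit1}) we have $Q_{D_0}=Q$.

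Next, after replacing $Q$ by $NQ$ for a sufficiently divisible positive integer $N$, all $a_i$ and $a_\alpha$ become integers and every $v^0-(h_{D_0})_{|\mathcal C}\in M_\Qbb$ lies in $M$, so by \Cref{th:divcrit}(2) the divisor $D:=ND_0$ is Cartier with $Q_D = NQ$; moreover $D$ is ample. To promote ampleness to very ampleness, I would replace $Q$ by a further multiple so that $D$ becomes very ample; on a projective variety this is automatic for some integer multiple, and, more concretely in the horospherical setting, the decomposition $H^0(X,D)=\bigoplus_{\chi\in(v^0+M)\cap Q_D}V(\chi)$ of \Cref{th:divcrit}(4) shows that the induced morphism $X\to \Pbb(H^0(X,D)^\vee)$ separates points and tangent vectors on the open orbit once $Q_D$ is large enough, and $G$-equivariance then propagates this to all of $X$.

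Finally, to identify the image with the claimed $G$-orbit closure, I would use that for each $\chi\in(v^0+M)\cap Q_D$ the $G$-module $V(\chi)$ appears with multiplicity one in $H^0(X,D)$ and contains a distinguished $B$-eigenvector $v_\chi$ of weight $\chi$. Under the embedding by $|D|$, the base point $x$ of the open orbit maps to the class of $\sum_\chi v_\chi$, because this is the unique $H$-fixed line in the direct sum (its weights all vanish on $H$ by definition of $M$). Closing up the $G$-orbit yields $X$. The main obstacle I foresee is not the Cartier/ample step, which is bookkeeping on the normal fan, but checking cleanly that some multiple of $D$ is very ample and that the map induced by $|D|$ is an isomorphism onto the claimed orbit closure; the key input here is the multiplicity-one statement in \Cref{th:divcrit}(4), which pins down the image of the base point and lets $G$-equivariance do the rest.
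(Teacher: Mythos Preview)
The paper does not supply its own proof of this proposition; it is quoted from \cite{HMMP} (Proposition~2.11 and Remark~2.12 there) and used as a black box, so there is no in-text argument to compare yours against. Your outline follows the natural route one would expect the cited reference to take: read off a $\Qbb$-divisor from the facet presentation of $\tilde Q$, check ampleness via the strict convexity criterion of \Cref{th:divcrit}(3), clear denominators to get a Cartier divisor, pass to a multiple for very ampleness, and identify the image.

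There is, however, a genuine gap in your last step. Your claim that $[\sum_\chi v_\chi]$ is ``the unique $H$-fixed line in the direct sum'' is false: every point $[\sum_\chi c_\chi v_\chi]$ with arbitrary scalars $c_\chi$ is $H$-fixed, since each $v_\chi$ is a $P$-eigenvector of weight $\chi\in M$ and $H$ is precisely the common kernel of the characters in $M$. So $H$-invariance alone does not pin down the image of the base point. The correct argument is that under $x\mapsto[s\mapsto s(x)]$ each $B$-eigensection $v_\chi\in H^0(X,D)$ is nonvanishing at $x=eH$ (it restricts to a nonzero $B$-semi-invariant regular function on the open $B$-orbit), and after rescaling the $v_\chi$ individually --- which does not change the $G$-orbit closure in $\Pbb(\oplus_\chi V(\chi))$ --- the image is $[\sum_\chi v_\chi]$. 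Equivalently, one checks that the stabiliser of $[\sum_\chi v_\chi]$ is exactly $H$ (because the $\chi$'s span $M$), so its $G$-orbit is a copy of $G/H$, and the closure of the image of $X$ must agree with this orbit closure by $G$-equivariance and irreducibility. A smaller imprecision: the coefficients $a_\alpha$ for $\alpha\in(S\setminus R)\setminus\mathcal F_X$ are not ``arbitrary'' subject to your inequality --- they are constrained by the requirement that $v^0=\sum_\alpha a_\alpha\varpi_\alpha$ satisfy $Q-v^0\subset M_\Qbb$, which fixes $v^0$ modulo $M_\Qbb$.
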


\begin{lem}\label{lem:Gmorph}
Let $X$ be a $G/H$-embedding and
let $D$ be a nef divisor on  $X$.
Then there is a horospherical subgroup $H\subsetneq H'\subseteq G$
such that $Q_D$ is the polytope of a $G/H'$-embedding $X'$
and a suitable multiple of $D$ defines a morphism $X\to X'$.
\end{lem}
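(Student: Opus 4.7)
The plan is to read off the new horospherical subgroup $H'$ directly from the combinatorial data of the polytope $Q_D$, and then to realize the morphism $X\to X'$ from the linear system of a sufficiently divisible multiple of $D$. When $D$ is nef but not ample, criterion \ref{th:divcrit}(3) allows two kinds of degeneration: either the affine hull of $\tilde{Q}_D$ has dimension strictly less than $n$, or $Q_D$ lies entirely in some wall $W_{\alpha,P}$ with $\alpha\in(S\setminus R)\setminus\mathcal{F}_X$. The group $H'$ has to record both phenomena.

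First I would define $H'$ in two steps. Let $R'$ be the union of $R$ with those $\alpha\in S\setminus R$ for which $Q_D\subseteq W_{\alpha,P}$, and let $P'\supseteq P$ be the parabolic containing $B$ whose set of simple roots is $R'$. Let $M'_\Qbb\subseteq M_\Qbb$ be the linear subspace parallel to the affine hull of $\tilde{Q}_D$; by the choice of $R'$, every character in $M'_\Qbb$ extends to $P'$, so $M':=M'_\Qbb\cap X(P')$ is a saturated sublattice of $X(P')$. Define $H'\subseteq P'$ as the common kernel of the characters in $M'$: then $P'/H'$ is a torus with character lattice $M'$, the group $H'$ is horospherical since it contains $U\subseteq P'$, and $H'\supseteq H$ because every character in $M'\subseteq M$ vanishes on $H$. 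The containment is strict whenever $M'\subsetneq M$ or $P'\supsetneq P$, that is, whenever $D$ is not ample. By construction $Q_D$ fulfills the axioms of Definition \ref{def:G/H-eq} as a $G/H'$-polytope in $X(P')^+_\Qbb$: its direction is $M'_\Qbb$ and it meets no wall $W_{\alpha,P'}$ with $\alpha\in S\setminus R'$.

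Proposition \ref{prop:classprojembpoly} then attaches to $Q_D$ a unique projective $G/H'$-embedding $X'$, and Proposition \ref{prop:veryample} says that, up to replacing $D$ by a multiple $mD$, the polytope $mQ_D$ is the moment polytope of a very ample Cartier divisor on $X'$, with $X'$ realized concretely as the closure of $G\cdot\bigl[\sum_{\chi}v_\chi\bigr]$ inside $\Pbb\bigl(\bigoplus V(\chi)\bigr)$, where $\chi$ runs over $(mv^0+M')\cap mQ_D$. To build the morphism $X\to X'$ I would choose $m$ so that $mD$ is Cartier and basepoint-free (nefness on the horospherical, hence Mori-dream, variety $X$ implies semi-ampleness) and consider the induced morphism $\phi_{|mD|}\colon X\to\Pbb(H^0(X,mD)^*)$. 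Theorem \ref{th:divcrit}(4) identifies $H^0(X,mD)$ with exactly the same $G$-module $\bigoplus V(\chi)$ indexed by $(mv^0+M')\cap mQ_D$ (the lattice points of $m\tilde{Q}_D$ all lying in $M'$ since $m\tilde{Q}_D\subseteq M'_\Qbb$), so the target of $\phi_{|mD|}$ matches the ambient projective space of $X'$, and $G$-equivariance forces $\phi_{|mD|}(X)=X'$.

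The main technical obstacle is the simultaneous bookkeeping in the definition of $H'$: one must enlarge the parabolic from $P$ to $P'$ to absorb the walls in which $Q_D$ sits, shrink the lattice from $M$ to $M'$ to record the drop in dimension, verify that $M'$ is saturated in $X(P')$ and that $H'$ is horospherical with $M'$ as its full annihilator lattice, and finally identify $\phi_{|mD|}(X)$ with $X'$. Once these verifications are in place, the comparison with the explicit embedding of $X'$ from Proposition \ref{prop:veryample} is essentially automatic from the coincidence of $G$-modules provided by Theorem \ref{th:divcrit}(4).
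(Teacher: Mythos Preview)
Your proposal is correct and follows essentially the same strategy as the paper: both arguments pass to a sufficiently divisible multiple of $D$, consider the $G$-equivariant map $X\to\Pbb(H^0(X,mD)^\vee)$, identify $H^0(X,mD)$ via Theorem~\ref{th:divcrit}(4), and recognize the image as the projective horospherical variety attached to $Q_D$ through Proposition~\ref{prop:veryample}. The only organizational difference is that the paper first writes down the map and then reads off $H'$ from its image, whereas you first construct $H'$ explicitly (enlarging $P$ to $P'$ and shrinking $M$ to $M'$) and then verify that the map lands in the corresponding $G/H'$-embedding; your explicit description of $H'$ is in fact exactly what the paper records later in Remark~\ref{rem:ontheline}. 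One small point: for the lattice comparison you should take $M'=M'_\Qbb\cap M$ rather than $M'_\Qbb\cap X(P')$ (this is what the paper does in Remark~\ref{rem:ontheline}), so that the identification of lattice points $(mv^0+M)\cap mQ_D=(mv^0+M')\cap mQ_D$ is immediate; with your definition you would still need to check that $M'_\Qbb\cap X(P')=M'_\Qbb\cap M$.
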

\begin{proof}
If $D$ is nef, the polytope $Q_D$ can be defined as in Proposition~\ref{cor:divcrit}. The colored fan that we can then construct is not necessarily $\mathbb{F}_X$, but the colored fan of a horospherical $G$-variety $Y$ (not necessarily a $G/H$-embedding), where the $G$-equivariant map $\phi_D$ associated to the nef divisor goes from $X$ to $Y$.

Indeed, up to multiplying $D$ or $Q$ by an integer, the proof of \cite[Proposition~2.11]{HMMP} and \cite[Remark~2.12]{HMMP} gives the projective $G$-equivariant map  $$\begin{array}{cccc}
\phi_D: & X & \longrightarrow & \Pbb(H^0(X,D)^\vee)\\
 & x & \longmapsto & [s\mapsto s(x)]
\end{array}$$ whose image is the closure of the $G$-orbit $G\cdot [\sum_{\chi\in (v^0+M)\cap Q}v_\chi]$ in $\Pbb(\oplus_{\chi\in (v^0+M)\cap Q}V(\chi))$. This closure is the variety $Y$ by applying Proposition~\ref{prop:veryample} to $Q=Q_D$. Thus $Q$ is a $G/H'$-polytope with $H\subseteq H'$, and $Y$ is the $G/H'$-embedding corresponding to $Q$.
\end{proof}

By the duality between colored fans and moment polytopes we easily get that if $X$ is a $\Qbb$-factorial $G/H$-embedding then for every ample $B$-invariant divisor $D$ the polytope $Q_D$ is simple. We can go further and give the following result, which is a translation of Corollary~\ref{cor:pic} in terms of polytopes, by using Proposition~\ref{cor:divcrit} \ref{divcrit3} and \ref{divcrit4}.
The matrix inequality $Ax\geq B$ comes from Proposition~\ref{cor:divcrit} \ref{divcrit1}.

\begin{lem}\label{lem:Qf}
Let $X$ be a $G/H$-embedding and let $D$ be an ample $B$-invariant divisor. The polytope $\tilde{Q}_D$ can be defined by a matrix inequality $Ax\geq B$, where the lines of $A$ are given by the $x_i$'s and the $\alpha^\vee_M$'s, and the column matrix $B$ is given by minus the coefficients of $D$. For any vertex $v$ of $\tilde{Q}_D$, denote $I_v$ the set of lines of $A$ such that $A_{I_v}x=B_{I_v}$. 

Then  $X$ is $\Qbb$-factorial
if and only if $A_{I_v}$ is surjective for any vertex $v$ of $\tilde{Q}_D$.

\end{lem}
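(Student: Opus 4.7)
The plan is to translate Corollary~\ref{cor:pic} into polytopal terms via Proposition~\ref{cor:divcrit}. First I would set up the key dictionary. By Proposition~\ref{cor:divcrit}(3)--(4), the vertices $v$ of $\tilde{Q}_D$ are in bijection with the maximal colored cones $(\mathcal{C}_v,\mathcal{F}_v)$ of $\mathbb{F}_X$, and $\mathcal{C}_v$ is the inward normal cone at $v$. Because $D$ is ample, Theorem~\ref{th:divcrit}(3) ensures that the minimum of $\langle\cdot,a\rangle$ on $\tilde{Q}_D$ equals $-a_i$ when $a=x_i$ and $-a_\alpha$ when $a=\alpha^\vee_M$ with $\alpha\in\mathcal{F}_X$. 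Hence a row $a$ of $A$ belongs to $I_v$ iff $a\in\mathcal{C}_v$, so $I_v=\{x_i\,:\,x_i\in\mathcal{C}_v\}\cup\{\alpha^\vee_M\,:\,\alpha\in\mathcal{F}_v\}$. The condition that $A_{I_v}\colon M_\Qbb\to\Qbb^{|I_v|}$ is surjective is equivalent to the rows of $A_{I_v}$, viewed as vectors of $N_\Qbb$, being $\Qbb$-linearly independent.

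For the direction $\Qbb$-factorial $\Rightarrow$ surjective, I would invoke Corollary~\ref{cor:pic} to obtain that $\mathcal{C}_v$ is simplicial (hence has exactly $n$ linearly independent ray generators, since $\mathcal{C}_v$ is full-dimensional for projective $X$) and that every $\alpha^\vee_M$ with $\alpha\in\mathcal{F}_X$ generates a ray of $\mathbb{F}_X$. Combined with the description of $I_v$ above, every row of $A_{I_v}$ is then a primitive generator of a ray of $\mathcal{C}_v$ and these rays exhaust the rays of $\mathcal{C}_v$; thus the rows form a basis of $N_\Qbb$ and $A_{I_v}$ is surjective.

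For the converse, suppose $A_{I_v}$ is surjective at every vertex $v$, and I would derive both conditions of Corollary~\ref{cor:pic}. If some $\alpha\in\mathcal{F}_X$ had $\alpha^\vee_M$ off every ray of $\mathbb{F}_X$, pick a maximal colored cone $(\mathcal{C}_v,\mathcal{F}_v)$ with $\alpha\in\mathcal{F}_v$; then $\alpha^\vee_M$ sits in the relative interior of a face of $\mathcal{C}_v$ of dimension at least two, so it is a positive combination of ray generators of that face, and each such generator already belongs to $I_v$ by the dictionary. The rows of $A_{I_v}$ would then be linearly dependent, contradicting surjectivity. A parallel argument rules out a non-simplicial $\mathcal{C}_v$: its more-than-$n$ ray generators are linearly dependent, yet all appear in $I_v$. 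Hence both conditions of Corollary~\ref{cor:pic} are forced and $X$ is $\Qbb$-factorial. The main technical point is the identification of $I_v$ with the rows of $A$ lying in $\mathcal{C}_v$, together with the fact that every ray generator of $\mathcal{C}_v$ appears among those rows; this is where ampleness of $D$ and the defining property of a colored cone combine.
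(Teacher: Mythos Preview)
Your proposal is correct and follows exactly the route the paper indicates: the paper does not give a detailed proof but simply states that Lemma~\ref{lem:Qf} is ``a translation of Corollary~\ref{cor:pic} in terms of polytopes, by using Proposition~\ref{cor:divcrit}\,\ref{divcrit3} and~\ref{divcrit4}'', and your argument carries out precisely this translation via the vertex--maximal-cone dictionary. The only minor imprecision is the sentence ``a row $a$ of $A$ belongs to $I_v$ iff $a\in\mathcal{C}_v$'', which as stated would also catch rows $\alpha^\vee_M$ with $\alpha\notin\mathcal{F}_X$ lying in $\mathcal{C}_v$; but you have already noted just before that ampleness forces those inequalities to be strict on $\tilde{Q}_D$, and your final identification $I_v=\{x_i:x_i\in\mathcal{C}_v\}\cup\{\alpha^\vee_M:\alpha\in\mathcal{F}_v\}$ is the right one.
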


Similarly, we have the following result. 

\begin{lem}\label{lem:Qcart}
Let $X$ be a $G/H$-embedding and let $D$ be an ample $B$-invariant divisor. Let $D'$ be a $B$-invariant divisor of $X$ and denote by $B'$ the column matrix is given by minus the coefficients of $D'$. 

Then $D'$ is $\Qbb$-Cartier if and only if for any vertex $v$ of $\tilde{Q}_D$, $B'$ is in the image of $A_{I_v}$.
\end{lem}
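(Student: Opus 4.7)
The plan is to translate the characterization of $\Qbb$-Cartier divisors from Theorem~\ref{th:divcrit}(1) into the linear-algebra condition in the statement, exploiting the bijection in Proposition~\ref{cor:divcrit}(\ref{divcrit4}) between maximal colored cones of $\mathbb{F}_X$ and vertices of $\tilde{Q}_D$.

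First I would unpack the setup. The rows of $A$ correspond to the $B$-invariant prime divisors of $X$ (the $x_i$'s and the $\alpha^\vee_M$'s for $\alpha\in\mathcal{F}_X$), and the columns $B$, $B'$ record minus the coefficients of $D$, $D'$ on those divisors. By Proposition~\ref{cor:divcrit}(\ref{divcrit4}), the rows indexed by $I_v$ are exactly the generators of the rays of the maximal colored cone $(\mathcal{C}_v,\mathcal{F}_v)$ attached to the vertex $v$: they are the inward-pointing normals to the facets of $\tilde{Q}_D$ passing through $v$. In particular, the image of $A_{I_v}\colon M_\Qbb\to\Qbb^{I_v}$ records all possible tuples of values $(\langle m,\rho\rangle)_{\rho\in I_v}$ for $m\in M_\Qbb$.

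For the ``only if'' direction, assume $D'$ is $\Qbb$-Cartier and apply Theorem~\ref{th:divcrit}(1): there is a piecewise linear function $h_{D'}$, linear on each cone of $\mathbb{F}_X$, with $h_{D'}(x_i)=a'_i$ and $h_{D'}(\alpha^\vee_M)=a'_\alpha$. Its restriction to $\mathcal{C}_v$ is given by some $m_v\in M_\Qbb$; evaluating at the generators of the rays in $I_v$ yields $A_{I_v}m_v=-B'_{I_v}$, so $B'_{I_v}$ lies in the image of $A_{I_v}$. Conversely, for each vertex $v$ choose $m_v\in M_\Qbb$ with $A_{I_v}m_v=-B'_{I_v}$ and set $h_{D'}|_{\mathcal{C}_v}:=\langle m_v,\cdot\rangle$; these patch into a piecewise linear function on $\mathbb{F}_X$ realizing the prescribed values, so Theorem~\ref{th:divcrit}(1) gives that $D'$ is $\Qbb$-Cartier.

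The only step requiring care is the gluing in the converse direction: on a common face $\mathcal{C}_v\cap\mathcal{C}_{v'}$ of two maximal cones, one must check that the local choices $m_v$ and $m_{v'}$ agree. This is automatic: the face is the cone generated by the common rays (a subset of both $I_v$ and $I_{v'}$), on which both $m_v$ and $m_{v'}$ take the same prescribed values $-B'_{I_v\cap I_{v'}}$, and they therefore coincide on the linear span of the face. This compatibility is the main thing to verify, but it follows immediately from the polyhedral structure of the fan and is the crux of the argument.
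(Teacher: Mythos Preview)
Your proof is correct and follows exactly the route the paper has in mind: the lemma is introduced with ``Similarly, we have the following result'' after Lemma~\ref{lem:Qf}, and your argument spells out that translation via Theorem~\ref{th:divcrit}(1) together with the vertex/maximal-cone bijection of Proposition~\ref{cor:divcrit}(\ref{divcrit3})--(\ref{divcrit4}). One minor point: the rows of $A$ are indexed by all $\alpha\in S\setminus R$, not only $\alpha\in\mathcal F_X$, but for ample $D$ the strict inequality in Theorem~\ref{th:divcrit}(3) ensures the rows for $\alpha\notin\mathcal F_X$ are never tight at a vertex, so $I_v$ is exactly as you describe and the argument is unaffected.
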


\begin{rem}
The existence of $G$-equivariant morphisms between horospherical varieties can be characterized in terms of colored fans \cite{knop} or equivalently in terms of moment polytopes \cite[section~2.4]{HMMP}.
In this text, we will rather use Lemma \ref{lem:Gmorph}.
\end{rem}

\section{Minimal Model Program}

In this section, we recall the results in \cite{HMMP} where the second-named author describes a minimal model program from a horospherical variety in terms of a one-parameter family of polytopes. 

\noindent We start by recalling some standard terminology for the minimal model program for projective varieties. We refer to \cite{KM98} for the basic notions on the minimal model program.

Let $X$ be a projective variety with terminal $\Qbb$-factorial singularities such that $K_X$ is not nef.
Then by the cone and contraction theorem there is a morphism $\varphi\colon X\to Y$ such that $\rho(X)=\rho(Y)+1$ and for every curve $C$ in $X$ which is contracted to a point by $\varphi$
we have $K_X\cdot C<0$. Moreover
\begin{itemize}
\item if $\dim Y<\dim X$ then $\varphi$ is called a Mori fibre space;
\item if $\dim Y=\dim X$ and $Exc(\varphi)$ has codimension 1 in $X$ then $\varphi$ is said to be divisorial;
\item if $\dim Y=\dim X$ and $Exc(\varphi)$ has codimension at least 2 in $X$ then $\varphi$ is said to be small.
\end{itemize}

In particular, the total space of a Mori fiber space is always assumed to be terminal $\Qbb$-factorial.

In the last case, by \cite{HMcK07} and \cite{HMcK10}, there is $\varphi^+\colon X^+\to Y$ such that $Exc(\varphi^+)$ has codimension at least 2 in $X^+$ and for every curve $C$ in $X^+$ contracted by $\varphi^+$
we have $K_{X^+}\cdot C>0$.
The data of $\varphi$ and $\varphi^+$ is called a {\it flip}.

In the second and third case $Y$ is birational to $X$. We set $X_1=Y$ and $\varphi_1=\varphi$ in the second case and $X_1=X^+$ and $\varphi_1=(\varphi^+)^{-1}\circ \varphi$ in the third case.
If $K_{X_1}$ is not nef, then by the cone and contraction theorem there is again a morphism $X_1\to Y_1 $.
An {\it MMP}, or minimal model program, is a sequence of birational maps $\varphi_i\colon X_{i-1}\dasharrow X_i$ obtained as above.
We say that it {\it terminates} if there is an integer $k$ such that $K_{X_k}$ is nef or there is a Mori fibre space $X_k\to T$.
A proper morphism with connected fibres is called a {\it contraction}. 
A birational morphism $\varphi\colon X\to Y$ such that $\rho(X)=\rho(Y)+1$ is called an {\it extremal contraction}. We say that a morphism $\varphi$ is {\it $K$-negative} (resp. {\it $K$-positive}) if for every curve $C$ in $X$ which is contracted to a point by $\varphi$
we have $K_X\cdot C<0$ (resp. $K_X\cdot C>0$).


\subsection{HMMP scaled by an ample divisor}

Let $(X,D)$ be a polarized horospherical variety: $X$ is a $G/H$-embedding and $D$ is a $B$-stable ample  $\Qbb$-divisor of $X$. Write $D=\sum_{i=1}^rb_iX_i+\sum_{\alpha\in S\backslash R}b_\alpha D_\alpha$. An anticanonical divisor of $X$ is $-K_X=\sum_{i=1}^rX_i+\sum_{\alpha\in S\backslash R}a_\alpha D_\alpha$, where $a_\alpha$ are integers greater or equal to 2, and given by an explicit formula 
\cite[Th.~4.2]{BrionCurves}.

We consider the one-parameter family of polytopes $(\tilde{Q}^\epsilon)_{\epsilon\geq 0}$ defined  by $\langle x,x_i\rangle\geq -b_i+\epsilon$ for all $i\in\{1,\dots,r\}$ and $\langle x,\alpha^\vee_M\rangle\geq -b_\alpha+\epsilon a_\alpha$ for any $\alpha\in S\backslash R$. 
Equivalently, 
let $A$ be the matrix associated to the linear map $\varphi(m)=(\langle m,x_i \rangle_{i=1\ldots r},\;  \langle m,\alpha_M^{\vee} \rangle_{\alpha\in\mathcal F})$.
Let $B$ be the column matrix whose coordinates are minus the coefficents of $D$; and $C$ the column matrix whose coordinates are the coefficents of $-K_X$.

We define $(Q^\epsilon)_{\epsilon\geq 0}$  by $Q^\epsilon=\tilde{Q}^\epsilon+\sum_{\alpha\in S\backslash R}b_\alpha-\epsilon a_\alpha \varpi_\alpha$.

\begin{rem}
For small $\epsilon\in \Qbb$, $\tilde{Q}^\epsilon=\tilde{Q}_{D+\epsilon K_X}$ and $Q^\epsilon=Q_{D+\epsilon K_X}$ is the moment polytope associated to the ample divisor $D+\epsilon K_X$.
\end{rem}

\begin{teo}(Corollary 3.16 and Section~4, \cite{HMMP})\label{HMMP}
For every ample divisor $D$ over a $\Qbb$-Gorenstein horospherical variety $X$, there exists $\epsilon_{max}>0$, 
and, there exist non-negative integers $k,j_0,\dots,j_k$, rational numbers $\alpha_{i,j}$ for $i\in\{0,\dots,k\}$ and $j\in\{0,\dots,j_i\}$ and $\alpha_{k,j_k+1}\in\Qbb_{>0}\cup\{+\infty\}$ ordered as follows with the convention that $\alpha_{i,j_i+1}=\alpha_{i+1,0}$ for any $i\in\{0,\dots,k-1\}$:
\begin{enumerate}
\item $\alpha_{0,0}=0$;
\item for any $i\in\{0,\dots,k\}$, and for any $j<j'$ in $\{0,\dots,j_i+1\}$ we have $\alpha_{i,j}<\alpha_{i,j'}$;
\end{enumerate}
and such that the different $G/H$-embedding associated to the polytopes in  the family $(Q^\epsilon)_{\epsilon\in\Qbb_{\geq 0}}$ are given by the following intervals:

\begin{enumerate}
\item $X_{i,0}$ when $\epsilon\in [\alpha_{i,0},\alpha_{i,1}[$, with $i\in\{0,\dots,k\}$;
\item  $X_{i,j}$ when $\epsilon\in ]\alpha_{i,j},\alpha_{i,j+1}[$, with $i\in\{0,\dots,k\}$ and $j\in\{1,\dots,j_i\}$;
\item $Y_{i,j}$ when $\epsilon=\alpha_{i,j}$ with $i\in\{0,\dots,k\}$ and $j\in\{1,\dots,j_i\}$;
\item $T$ such that $\dim T<\dim X$ when $\epsilon=\alpha_{k,j_k+1}=\epsilon_{max}$.
 \end{enumerate}



Moreover we get dominant $G$-equivariant morphisms: 
\begin{enumerate} \item $\phi_{i,j}:X_{i,j-1}\longrightarrow Y_{i,j}$ for any $i\in\{0,\dots,k\}$ and $j\in\{1,\dots,j_i\}$;
\item $\phi_{i,j}^+:X_{i,j}\longrightarrow Y_{i,j}$ for any $i\in\{0,\dots,k\}$ and $j\in\{1,\dots,j_i\}$; \item $\phi_i:X_{i,j_i}\longrightarrow X_{i+1,0}$ for any $i\in\{0,\dots,k-1\}$;
\item and $\phi:X_{k,j_k}\longrightarrow T$.
\end{enumerate}
For every $i,j$ the morphism  $\phi_{i,j}$ is $K$-negative,  the morphism $\phi_{i,j}^+$ is $K$-positive and their exceptional loci have codimension at least 2.
For every $i$ the morphism $\phi_i$ is a $K$-negative divisorial contraction and $\phi$ is a fibration.

\end{teo}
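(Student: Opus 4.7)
The plan is to analyze the one-parameter family $(Q^\epsilon)_{\epsilon\geq 0}$ by tracking how its affine-in-$\epsilon$ defining inequalities deform the polytope, and to translate each combinatorial transition into an MMP operation via Lemma~\ref{lem:Gmorph}, Lemma~\ref{lem:Qf}, and Lemma~\ref{lem:Qcart}. Since the right-hand sides of the defining inequalities of $\tilde Q^\epsilon$ are affine in $\epsilon$, the combinatorial type of $Q^\epsilon$ (together with the data of which facets sit on walls of the dominant chamber) is locally constant outside a finite set of critical values of $\epsilon$. On each maximal open interval of constancy, Proposition~\ref{prop:classprojembpoly} assigns a fixed projective $G/H$-embedding, which I would name $X_{i,j}$; at each critical value, I would set $Y_{i,j}$ to be the projective $G/H'$-embedding associated to $Q^{\alpha_{i,j}}$.

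First I would check that for $\epsilon$ slightly larger than $0$ the divisor $D+\epsilon K_X$ is still ample, so $Q^\epsilon$ defines $X_{0,0}=X$. Since $K_X$ is not pseudoeffective on a horospherical variety, the locus $\{\epsilon\geq 0 \mid \dim Q^\epsilon=n\}$ is a bounded interval $[0,\epsilon_{max}]$, and on its interior we can partition it according to critical values $0=\alpha_{0,0}<\alpha_{0,1}<\cdots<\alpha_{k,j_k+1}=\epsilon_{max}$. At each $\alpha_{i,j}$, the divisor $D+\alpha_{i,j}K_X$ is nef but no longer ample on $X_{i,j-1}$ (approaching from the left) and also on $X_{i,j}$ (approaching from the right); Lemma~\ref{lem:Gmorph} applied to each side produces the $G$-equivariant morphisms $\phi_{i,j}\colon X_{i,j-1}\to Y_{i,j}$ and $\phi_{i,j}^+\colon X_{i,j}\to Y_{i,j}$.

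The nature of each transition is detected combinatorially. Using Corollary~\ref{cor:pic} and Lemma~\ref{lem:Qf}, I would distinguish three cases by comparing $Q^{\alpha_{i,j}-\eta}$, $Q^{\alpha_{i,j}}$ and $Q^{\alpha_{i,j}+\eta}$ for small $\eta>0$:
\begin{enumerate}
\item a facet of $Q^\epsilon$ is destroyed and the Picard number drops, corresponding to a divisorial contraction $\phi_i\colon X_{i,j_i}\to X_{i+1,0}$;
\item no facet is destroyed but the face lattice changes at some interior vertex, corresponding to the flipping diagram $\phi_{i,j},\phi_{i,j}^+$ with exceptional loci of codimension at least $2$ (which is equivalent, via Lemma~\ref{lem:Qf}, to the unchanged facets keeping their rank condition);
\item the dimension of $Q^\epsilon$ drops, which can only occur at $\epsilon_{max}$ and gives the fibration $\phi\colon X_{k,j_k}\to T$.
\end{enumerate}
The sign conditions ($K$-negativity of $\phi_{i,j}$, $\phi_i$, $\phi$ and $K$-positivity of $\phi_{i,j}^+$) then follow because on curves contracted by these morphisms the class of $D+\epsilon K_X$ is positive on the ample side of $\alpha_{i,j}$ and negative on the other side, so $K_{X_{i,j-1}}\cdot C<0$ and $K_{X_{i,j}}\cdot C>0$ respectively.

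The main obstacle will be verifying case~(2): showing that in the ``flipping'' combinatorial transition the exceptional loci of both $\phi_{i,j}$ and $\phi_{i,j}^+$ have codimension at least $2$ and that $X_{i,j}$ is again $\Qbb$-factorial. This reduces, via Lemma~\ref{lem:Qf} and Lemma~\ref{lem:Qcart}, to a local analysis of the star of the affected vertex in $\tilde Q^\epsilon$ and requires checking that no $B$-stable divisor is contracted. Termination with finitely many links is then automatic: the number of combinatorial types realised along the bounded interval $[0,\epsilon_{max}]$ is finite, and after the last critical value strictly less than $\epsilon_{max}$ the dimension of $Q^\epsilon$ must drop, producing the final Mori fibre space.
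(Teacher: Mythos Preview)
The paper does not prove this theorem: it is quoted verbatim from \cite{HMMP} (as the header ``Corollary 3.16 and Section~4, \cite{HMMP}'' indicates), and no argument is supplied here. So there is no ``paper's own proof'' to compare your proposal against.

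That said, your outline is essentially the argument of \cite{HMMP}: affine dependence of the defining inequalities on $\epsilon$ forces finitely many combinatorial transitions; Proposition~\ref{prop:classprojembpoly} identifies the variety on each open interval; Lemma~\ref{lem:Gmorph} produces the morphisms at each critical value; and one classifies the transitions by whether a facet disappears, a vertex becomes non-simple, or the dimension drops. Two points deserve more care than you give them. First, your trichotomy is not quite complete: at a single critical value several facets could in principle collapse simultaneously, or a facet could hit a wall of the dominant chamber (changing the colour data) without the face lattice otherwise changing; the actual proof in \cite{HMMP} handles this by a careful bookkeeping of which inequalities become redundant and in what order, and this is where the double indexing $(i,j)$ comes from. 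Second, your $K$-sign argument (``$D+\epsilon K_X$ is positive on the ample side'') is the right intuition but as written conflates intersection numbers on different varieties; to make it rigorous you need to pull back to the variety on the relevant side of $\alpha_{i,j}$ and use that $D+\alpha_{i,j}K$ is trivial on the contracted curves there, so the sign of $K\cdot C$ is the derivative in $\epsilon$ of $(D+\epsilon K)\cdot C$ at $\alpha_{i,j}$.
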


We will refer to the series of maps of Theorem \ref{HMMP} as {\it HMMP}, for horospherical minimal model program.
\begin{rem}
We notice that the algorithm described in Theorem \ref{HMMP} is not necessarily an MMP, as the morphisms involved are not extremal contractions. Nevertheless, if $X$ is terminal $\Qbb$-factorial,  it is an MMP for a general choice of $D$, in particular $\phi$ is a Mori fiber space.
\end{rem}


\begin{rem}\label{rem:notpseudo}
In Theorem~\ref{HMMP}, we can also use the family $(\tilde{Q}^\epsilon)_{\epsilon\in\Qbb_{\geq 0}}$. By replacing the walls of the dominant chamber by the inequalities in $Ax\geq B+\epsilon C$ coming from an $\alpha\in S\backslash R$, the study of the family and associated horospherical variety is simpler.
\end{rem}

\section{Resolutions and Horospherical MMP}

The main goal of this section is to prove that if $X/T$ is a Mori fiber space and $Z\to X$ a resolution of the singularites of $X$, then 
there is a euclidian open set in $WDiv(Z)_{\Qbb}$ of ample divisors $A$ such that 
the HMMP from $Z$ with scaling of $A$ ends with  $X/T$.


\begin{lem}\label{lem:exres}
 Let $X$ and $Y$ be $G/H$-embeddings. Then there is a smooth $G/H$-embedding $Z$ and $Z\to X\times Y$ a resolution of the indeterminacy of $X\dasharrow Y$. 
\end{lem}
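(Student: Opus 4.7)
The plan is to produce $Z$ in two stages, combining the standard closure-of-graph construction with a combinatorial desingularization of colored fans in the spirit of Luna--Vust theory.

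\emph{Stage 1: a $G/H$-embedding dominating both $X$ and $Y$.} Since both $X$ and $Y$ contain $G/H$ as their open orbit, the birational map $\psi\colon X\dashrightarrow Y$ restricts to the identity on $G/H$ and is in particular $G$-equivariant. Let $\Gamma\subset X\times Y$ be the closure of the graph of $\psi$, equivalently the closure of the diagonally embedded $G/H$ with respect to the diagonal $G$-action. Then $\Gamma$ is an irreducible $G$-invariant subvariety of $X\times Y$ containing $G/H$ as an open orbit. Its normalization $W$ is therefore a $G/H$-embedding equipped with $G$-equivariant birational projections $W\to X$ and $W\to Y$. Combinatorially, the colored fan $\mathbb{F}_W$ is obtained as a common refinement of $\mathbb{F}_X$ and $\mathbb{F}_Y$: each colored cone of $\mathbb{F}_W$ is contained in a colored cone of $\mathbb{F}_X$ and in a colored cone of $\mathbb{F}_Y$, which is exactly the condition needed by Luna--Vust for the existence of the two morphisms.

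\emph{Stage 2: equivariant desingularization.} I would then refine $\mathbb{F}_W$ to a smooth colored fan $\mathbb{F}_Z$ by a sequence of star subdivisions. First, one subdivides each maximal cone so that it becomes simplicial, and then so that each $1$-dimensional ray is generated by a primitive element of $N$ forming part of a $\Zbb$-basis of the lattice generated by the rays of the cone; this is the toric-type argument on the underlying fan. Second, one adjusts the colors: for each color $\alpha\in\mathcal{F}_W$ whose image $\alpha^\vee_M$ is not a ray of the refined fan, one subdivides so that $\Qbb_{\geq 0}\alpha^\vee_M$ becomes a ray of $\mathbb{F}_Z$, which is the condition (from Corollary~\ref{cor:pic}) needed to obtain $\Qbb$-factoriality, and then one passes to smoothness using the explicit description of the tangent representation of $G$ at a fixed point of a horospherical variety. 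The Luna--Vust correspondence then produces a smooth $G/H$-embedding $Z$ with a $G$-equivariant birational morphism $Z\to W$.

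\emph{Conclusion.} The composition $Z\to W\to X\times Y$ (through the closed embedding $W\hookrightarrow X\times Y$ induced by normalization) is a $G$-equivariant birational morphism from a smooth $G/H$-embedding whose image is $\Gamma$, hence it resolves the indeterminacy of $\psi\colon X\dashrightarrow Y$.

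\textbf{Main obstacle.} Stage 1 is essentially formal. The delicate point is Stage 2: unlike in the toric case, the presence of colors imposes additional constraints for smoothness of a horospherical variety, and one has to check that the star subdivisions of the colored fan producing the regularity conditions can always be carried out while keeping the assignment of colors compatible (in particular, that detaching a color can be achieved by a subdivision adding a $G$-invariant ray). This is well known in the spherical/horospherical literature and amounts to the existence of equivariant resolutions of singularities, but spelling out the combinatorial recipe is the real content of the proof.
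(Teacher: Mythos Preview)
Your overall strategy is sound, but the paper takes a cleaner route that sidesteps precisely the obstacle you identify. Rather than first forming the graph closure $W$ and then wrestling with an equivariant resolution of a \emph{colored} variety, the paper decolors at the outset: one passes from $X$ and $Y$ to their toroidal covers $\tilde X\to X$ and $\tilde Y\to Y$ by simply dropping all colors from the colored fans. These are still $G/H$-embeddings, now governed by ordinary (uncolored) fans in $N_\Qbb$, and the remaining task---finding a common smooth refinement of two fans---is the classical toric statement. The resulting smooth toroidal $Z$ maps to $\tilde X$ and $\tilde Y$, hence to $X$ and $Y$.

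The gain from the paper's order of operations is that decoloration is a trivial combinatorial move (replace each $(\mathcal C,\mathcal F)$ by $(\mathcal C,\emptyset)$) and the smoothness criterion for toroidal horospherical varieties is exactly the toric one. Your Stage~2, by contrast, must handle smoothness in the presence of colors, and your description there is not quite on target: forcing $\alpha^\vee_M$ to lie on a ray addresses $\Qbb$-factoriality (Corollary~\ref{cor:pic}) but does not by itself give smoothness, and the appeal to ``the tangent representation'' is left vague. The simplest repair is exactly what the paper does---remove the colors first, so that your Stage~2 collapses to the standard toric desingularization and your ``main obstacle'' evaporates.
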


\begin{proof}
The existence of a smooth resolution of the indeterminacy of $X\dasharrow Y$ 
 is a consequence of the same result for toric varieties.
Indeed, we can first unpick colors both for $X$ and $Y$ to obtain toroidal varieties. Then, as for toric varieties we choose a common smooth subdivision of the two fans.
\end{proof}


\begin{lem}\label{lem:polyZX}
Let $X$ and $Z$ be terminal and $\Qbb$-factorial $G/H$-embeddings such that $Z$ is a resolution of the singularities of $X$.


Let $E_1,\dots,E_k$ be the exceptional divisors of $\phi\colon Z\longrightarrow X$. 
Then, for any non-negative rational numbers $d_1,\dots,d_k$, we have $Q_D=Q_{\phi^*(D)+\sum_{i=1}^kd_iE_i}$.

\end{lem}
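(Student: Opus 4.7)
My plan is to prove the equality of polytopes combinatorially by comparing the defining inequalities of the two pseudo-moment polytopes and checking that the translation weight $v^0=\sum_{\alpha\in S\setminus R}a_\alpha\varpi_\alpha$ is the same for both divisors.

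First, I would argue that the coefficient of $D_\alpha$ in $\phi^*D+\sum d_iE_i$ equals its coefficient $a_\alpha$ in $D$: since $\phi$ is an isomorphism over the open orbit $G/H$ and $D_\alpha$ on either side is the closure of the same fixed $B$-stable prime divisor of $G/H$, the strict transform of $D_\alpha\subset X$ is $D_\alpha\subset Z$ with coefficient $1$, and the exceptional perturbation $\sum d_iE_i$ does not affect any $D_\alpha$-coefficient. Hence the $v^0$'s coincide and it is enough to prove $\tilde Q_D=\tilde Q_{\phi^*D+\sum d_iE_i}$.

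Next, using that $X$ and $Z$ are $\Qbb$-factorial, Corollary~\ref{cor:pic} gives that every color $\alpha$ in $\mathcal{F}_X$ or $\mathcal{F}_Z$ satisfies $\alpha^\vee_M=n_\alpha x_{i_\alpha}$ for a primitive ray generator $x_{i_\alpha}$ of the corresponding fan and some positive integer $n_\alpha$. The linearity of $h_D$ on that ray forces $a_\alpha=n_\alpha a_{i_\alpha}$, so the color inequality $\langle m,\alpha^\vee_M\rangle\geq -a_\alpha$ is equivalent to a positive multiple of the ray inequality $\langle m,x_{i_\alpha}\rangle\geq -a_{i_\alpha}$. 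Therefore both pseudo-moment polytopes are cut out by inequalities indexed only by the rays of their respective fans.

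The rays of $\mathbb{F}_Z$ are the rays $x_1,\dots,x_r$ of $\mathbb{F}_X$ (preserved in any refinement) together with the new exceptional rays $e_1,\dots,e_k$, and the coefficients of the corresponding $G$-stable divisors in $\phi^*D+\sum d_iE_i$ are $h_D(x_i)=a_i$ and $h_D(e_j)+d_j$, respectively. So
$$\tilde Q_{\phi^*D+\sum d_iE_i}=\{m\in M_\Qbb\,:\,\langle m,x_i\rangle\geq -a_i\ \forall i,\ \langle m,e_j\rangle\geq -h_D(e_j)-d_j\ \forall j\},$$
whose first block of inequalities already defines $\tilde Q_D$. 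The $e_j$-inequalities are redundant: each $e_j$ lies in a simplicial cone $\mathcal{C}$ of $\mathbb{F}_X$ with generators $x_{i_1},\dots,x_{i_l}$, so $e_j=\sum_s\lambda_s x_{i_s}$ with $\lambda_s\geq 0$; since $h_D$ is linear on $\mathcal{C}$, $h_D(e_j)=\sum_s\lambda_s a_{i_s}$, and any $m$ with $\langle m,x_{i_s}\rangle\geq -a_{i_s}$ for all $s$ satisfies $\langle m,e_j\rangle\geq -h_D(e_j)\geq -h_D(e_j)-d_j$ (using $d_j\geq 0$). The main obstacle is the first step---tracking the $a_\alpha$'s carefully under $\phi^*$ when $\alpha\in\mathcal{F}_X$ and the divisors $D_\alpha$ and $X_{i_\alpha}$ may be linearly equivalent on $X$ but distinct prime divisors on $Z$; once this bookkeeping is in place, the rest is an immediate computation with the convex piecewise linear function $h_D$.
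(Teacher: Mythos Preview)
Your approach is essentially the paper's: both arguments reduce to showing that the extra half-space constraints coming from the exceptional rays $e_j$ are redundant, using that $\phi^*D$ is governed by the same piecewise linear function $h_D$ so that the $E_j$-coefficient of $\phi^*D$ is exactly $h_D(e_j)$, and then linearity of $h_D$ on each cone of $\mathbb{F}_X$ gives the inequality $\langle m,e_j\rangle\ge -h_D(e_j)\ge -h_D(e_j)-d_j$ for any $m\in\tilde Q_D$. The paper phrases this last step geometrically (the hyperplane $\{\langle m,e_j\rangle=-h_D(e_j)\}$ meets $Q_D$ in a face), but it is the same computation.

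Your second paragraph, however, is both unnecessary and not quite right. Corollary~\ref{cor:pic} says that for $\alpha\in\mathcal{F}_X$ the vector $\alpha^\vee_M$ spans a ray of $\mathbb{F}_X$, but that ray need not be one of the non-colored rays $\Qbb_{\ge 0}x_i$ coming from a $G$-stable divisor; it may simply be the colored ray itself. So the identity ``$\alpha^\vee_M=n_\alpha x_{i_\alpha}$'' and the conclusion ``both polytopes are cut out by inequalities indexed only by the rays $x_1,\dots,x_r$'' are unjustified. The good news is that you do not need this step at all: as you already observed in your first paragraph, the coefficient of each $D_\alpha$ in $\phi^*D+\sum d_jE_j$ equals its coefficient in $D$, so the color inequalities $\langle m,\alpha^\vee_M\rangle\ge -a_\alpha$ are \emph{identical} on both sides and can simply be carried along. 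Then in your redundancy argument write $e_j$ as a non-negative combination of the generators of the (simplicial) cone of $\mathbb{F}_X$ containing it---these generators may include some $\alpha^\vee_M$'s as well as $x_i$'s---and the same linearity of $h_D$ finishes the job. With that correction your ``main obstacle'' disappears and the proof is complete, matching the paper's.
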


\begin{proof}
Since $X$ has terminal singularites, we have $K_Z=\phi^*K_X+\sum_{i=1}^k a_iE_i$ with  $a_i>0$ for every $i$.

We denote by $X_1,\dots,X_r$ the $G$-stable irreducible divisors of $X$. For any $i\in\{1,\dots,r\}$, we denote by $x_i$ the primitive element 
in $N$ of the colored edge associated to $X_i$. Then the $G$-stable irreducible divisors of  $Z$ are $X_1,\dots,X_r,E_1,\dots,E_k$.
We denote by $e_i$ the  primitive element in $N$ of the colored edge associated to $E_i$.
Let $D=\sum_{i=1}^r b_iXi+\sum_{\alpha\in S\backslash R}b_\alpha D_\alpha$ be an ample $B$-stable divisor of $X$. 
Denote by $c_1,\dots,c_k$ the rational numbers such that $\phi^*(D)= \sum_{i=1}^r b_iXi+\sum_{j=1}^k c_jE_j+\sum_{\alpha\in S\backslash R}b_\alpha D_\alpha$. 
Then, for any non-negative rational numbers $d_1,\dots,d_k$, the polytope $Q_D$ coincides with

\begin{align*}
Q_{\phi^*(D)+\sum_{i=1}^kd_iE_i}:=&\\
\{m\in M_\Qbb\mid&\,\langle m,x_i\rangle \geq -b_i, \langle m,e_j\rangle \geq -c_j-d_j,\langle m,\alpha^\vee_M\rangle\geq -a_\alpha\,\mbox{for all } i,j,\alpha\}.
\end{align*}

By \cite{briondiv}, the divisors $D$ and $\phi^*(D)$ are the divisors associated to the same piecewise linear function $h_D$ defined in Theorem~\ref{th:divcrit}.
Therefore, the coefficients $d_j$ are the values $h_D(e_j)$.
In terms of polytopes, since $D$ is ample, this means that $\mathcal{H}_j:=\{m\in M_\Qbb\,\mid\,\langle m,e_j\rangle=-c_j\}$
intersects $Q_D$ along a face of $Q_D$. In particular, intersecting $Q_D$ with the halfspace $\{m\in M_\Qbb\,\mid\,\langle m,e_j\rangle\geq -c_j-d_j\}$ does not change the polytope.
\end{proof}

\begin{lem}\label{lem:HMMPendsXT}
Let $X/T$ be a Mori fibre space such that $X$ is a $G/H$-embedding.
The set of ample $B$-stable $\Qbb$-divisors $D$ of $X$ such that the HMMP from $X$ scaled by $D$ gives $X\longrightarrow T$ is an open cone $C_1$ in $WDiv(X)_\Qbb$. 
\end{lem}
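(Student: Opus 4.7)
\bigskip
\noindent\emph{Plan of proof.} Let $R$ be a curve contracted by $\phi\colon X\to T$ generating the corresponding extremal ray. Since $X/T$ is an extremal Mori fibration, $\phi^*$ identifies $WDiv(T)_\Qbb$ modulo linear equivalence with the hyperplane $\{D'\cdot R=0\}$ inside $WDiv(X)_\Qbb$ modulo linear equivalence, so $\phi^*(Nef(T))$ is a codimension-one face $F$ of $Nef(X)$ with relative interior $\phi^*(Amp(T))$. The strategy is to prove the identity
$$C_1 \;=\; Amp(X)\,\cap\,\bigl(\phi^*(Amp(T))+\Qbb_{>0}\cdot(-K_X)\bigr),$$
all read up to linear equivalence. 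Granted this, the cone property is immediate, and openness in the Picard lattice follows because $Amp(X)$ is open and $K_X\cdot R<0$ makes the ray $\Qbb_{>0}(-K_X)$ transverse to the hyperplane containing $\phi^*(Amp(T))$, so the Minkowski sum is open; pulling back through the continuous linear quotient $WDiv(X)_\Qbb\to WDiv(X)_\Qbb/WDiv_0(X)_\Qbb$ then transfers openness to $WDiv(X)_\Qbb$.

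For the inclusion $\supseteq$, start from $D$ ample with $D+\epsilon_0 K_X\sim\phi^*A'$ for some $A'\in Amp(T)$ and $\epsilon_0>0$. Convexity of $Nef(X)$ keeps $D+\epsilon K_X$ ample for all $\epsilon\in[0,\epsilon_0)$, so in the family of Theorem~\ref{HMMP} no transition occurs before $\epsilon_0$ and $Q^\epsilon$ continues to define $X$. At $\epsilon=\epsilon_0$, Lemma~\ref{lem:Gmorph} gives the morphism associated to the nef divisor $\phi^*A'$; the projection formula $H^0(X,\phi^*A')=H^0(T,A')$ exhibits it as $\phi$ followed by the embedding of $T$ by $A'$, so the HMMP from $X$ scaled by $D$ reduces to the single step $X\to T$ and $D\in C_1$. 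Conversely, if $D\in C_1$ then Theorem~\ref{HMMP} together with Lemma~\ref{lem:Gmorph} produce a nef divisor $D+\epsilon_{max}K_X$ whose associated morphism is exactly $\phi$, which up to linear equivalence forces $D+\epsilon_{max}K_X\in\phi^*(Amp(T))$.

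The main obstacle is this converse inclusion, and specifically ruling out that $D+\epsilon_{max}K_X$ lies on a proper sub-face of $F$ (equivalently, on another facet of $Nef(X)$ besides $F$): such a landing point would cause the moment polytope to drop dimension by more than $\dim X-\dim T$, producing a fibration whose target strictly dominates $T$ and contradicting $D\in C_1$. Excluding this uses the rank--dimension correspondence for $G/H$-polytopes together with the uniqueness of the extremal contraction associated with the face $F=\phi^*(Nef(T))$, both implicit in the polytope-to-fan dictionary recalled in Section~\ref{sec:horo}.
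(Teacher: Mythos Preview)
Your proposal is correct and follows essentially the same approach as the paper. Both arguments identify the relevant facet of $Nef(X)$ (you via $\phi^*(Nef(T))$, the paper via duality with the contracted ray $R$) and characterise $C_1$ as the set of ample $D$ for which the half-line $D+\Qbb_{>0}K_X$ first exits $Nef(X)$ through the relative interior of that facet; the paper simply asserts this equivalence in one line, whereas you unpack both inclusions and the openness argument in more detail.
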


\begin{proof}
Let $R$ be the extremal ray of $NE(X)$ corresponding to the Mori fibration $X\longrightarrow T$.
Let $F$ be the dual facet of $R$ in $Nef(X)_\Qbb$.
By abuse of notation we denote by $F$ the facet $F+WDiv_0(X)_\Qbb$ of the inverse image of $Nef(X)_\Qbb$ in $WDiv(X)_\Qbb$.

Then the HMMP from $X$ scaled by $D$ gives $X\longrightarrow T$ if and only if the half line $D+\Qbb_+K_X$ intersects the boundary of $Nef(X)_\Qbb+WDiv_0(X)_\Qbb$ 
in the relative interior $\mathring{F}$ of $F$. Equivalently, $D$ is in $C_1':=\mathring{F}-\Qbb_+K_X=\Qbb_+(\mathring{F}-K_X)$.

Note that $-K_X\cdot C>0$ for any $[C]\in R$, so that $-K_X$ is not in $F$ and then $C_1'$ is of maximal dimension.
This implies that $D'-\epsilon K_X$ is ample for any $D'\in \mathring{F}$ and $\epsilon>0$ small enough 
and then $C_1'$ intersects $Amp(X)_\Qbb+WDiv_0(X)_\Qbb$ non trivially.

Finally, the set $C_1=C_1'\cap Amp(X)_\Qbb+WDiv_0(X)_\Qbb$ is the sought open cone.
\end{proof}

\begin{prop}\label{pro:resMMP}
 Let $X/T$ be a Mori fibre space such that $X$ is a $G/H$-embedding.
 Let $Z\to X$ be a resolution of singularities in the category of $G/H$-embeddings.
 Then there is an euclidian open neighborhood $U_X$ of $WDiv(Z)_\Qbb$ such that every divisor in $U_X$ is ample and for every $A\in U_X$ 
 the Mori fiber space $X/T$ is the outcome of the HMMP from $Z$ with scaling of $A$.
\end{prop}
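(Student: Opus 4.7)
The strategy is to apply Lemma~\ref{lem:HMMPendsXT} to the Mori fibre space $X/T$ and lift the resulting open cone of ample divisors from $WDiv(X)_\Qbb$ to $WDiv(Z)_\Qbb$ through the resolution $\phi\colon Z\to X$. By Lemma~\ref{lem:HMMPendsXT}, I start by fixing the open cone $C_1\subset WDiv(X)_\Qbb$ of ample divisors $D$ on $X$ such that the HMMP from $X$ scaled by $D$ ends with $X\to T$. Using the decomposition $WDiv(Z)_\Qbb\cong WDiv(X)_\Qbb\oplus\bigoplus_{j=1}^k\Qbb E_j$ given by $(D,d_1,\dots,d_k)\mapsto\phi^*D+\sum d_jE_j$, I consider candidates $A=\phi^*D+\sum d_jE_j$ with $D\in C_1$ and the $d_j$ small rationals. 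Since $\phi^*D$ lies on a codimension-$k$ face of $\overline{Nef(Z)_\Qbb}$ (the relative Picard group of $Z/X$ is spanned by the $E_j$), ampleness being an open condition implies that for suitable signs and sufficiently small magnitudes of the $d_j$, the divisor $A$ is ample on $Z$; this produces a non-empty open set of ample divisors on $Z$ inside the image of $C_1\times\bigoplus_j\Qbb E_j$.

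The heart of the proof is to show that for such $A$ the HMMP from $Z$ scaled by $A$ ends with $X\to T$. Since $X$ is terminal, $K_Z=\phi^*K_X+\sum a_jE_j$ with $a_j>0$, so
\[
A+\epsilon K_Z=\phi^*(D+\epsilon K_X)+\sum_{j=1}^k(d_j+\epsilon a_j)E_j.
\]
A short computation with the piecewise linear function $h_D$ of Theorem~\ref{th:divcrit} (noting that $e_j$ lies in the interior of a maximal colored cone of $\mathbb{F}_X$) shows that there is a threshold $\epsilon_j=-d_j/a_j>0$ such that, for $\epsilon\geq\epsilon_j$, the $E_j$-inequality becomes redundant in $Q_{A+\epsilon K_Z}$. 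Beyond all these thresholds, Lemma~\ref{lem:polyZX} yields $Q_{A+\epsilon K_Z}=Q_{D+\epsilon K_X}$. Hence, after at most $k$ initial divisorial contractions of the exceptional divisors, the family of polytopes on $Z$ coincides with the one produced by the HMMP on $X$ scaled by $D$, which ends with $X\to T$ by the choice $D\in C_1$.

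Finally, openness of the resulting set $U_X\subset WDiv(Z)_\Qbb$ is obtained in the spirit of the proof of Lemma~\ref{lem:HMMPendsXT}: the cone $C_1$ is open in $WDiv(X)_\Qbb$, ampleness of $A$ on $Z$ is an open condition, and the requirement that all the thresholds $\epsilon_j$ be strictly smaller than the first nontrivial $\epsilon$-value occurring in the HMMP on $X$ scaled by $D$ is also open in $(D,d_1,\dots,d_k)$. I expect the main obstacle to lie in the middle paragraph: rigorously verifying that along the family $(Q^\epsilon)_{\epsilon\geq 0}$ on $Z$ each combinatorial event does realise an HMMP step in the sense of Theorem~\ref{HMMP}, and that the exceptional divisors are fully eliminated before any step of the HMMP on $X$ (flip, divisorial contraction, or the final fibration) intervenes, so that the transition from the HMMP on $Z$ to the HMMP on $X$ is clean.
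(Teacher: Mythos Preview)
Your approach is essentially the paper's, and it is correct. The paper likewise starts from the open cone $C_1$ of Lemma~\ref{lem:HMMPendsXT}, lifts to $WDiv(Z)_\Qbb=\phi^*(WDiv(X)_\Qbb)\oplus\bigoplus_j\Qbb E_j$, and invokes Lemma~\ref{lem:polyZX} to conclude that beyond a threshold in $\epsilon$ the polytope family on $Z$ coincides with the one on $X$ scaled by $D$, hence terminates in $X\to T$.

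The only difference is a cosmetic reparametrisation. Instead of your $A=\phi^*D+\sum d_jE_j$ with individual thresholds $\epsilon_j=-d_j/a_j$, the paper writes $A=\phi^*(D-\eta K_X)+\sum b_jE_j$ for a small $\eta>0$ with $D-\eta K_X$ still ample and $b_j+\eta a_j>0$. Then $A+\eta K_Z=\phi^*D+\sum(b_j+\eta a_j)E_j$, so Lemma~\ref{lem:polyZX} gives $Q_{A+\eta K_Z}=Q_D$ at the single value $\epsilon=\eta$, and for all $\epsilon\ge\eta$ the exceptional inequalities stay redundant. This collapses your $k$ thresholds into one, but is logically equivalent.

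Your worry in the final paragraph is misplaced, and you should drop it. The outcome of the HMMP is read off from the polytope family near $\epsilon_{\max}$; you do not need the $E_j$ to be contracted \emph{before} any step of the HMMP on $X$ occurs, nor do you need to check that each intermediate combinatorial event on $Z$ is an MMP step. All you need is that $Q_{A+\epsilon K_Z}=Q_{D+\epsilon K_X}$ for $\epsilon$ beyond your thresholds and that these thresholds are smaller than the $\epsilon_{\max}$ of the HMMP on $X$ scaled by $D$ (which you arrange by taking the $|d_j|$ small). The paper dispatches this in one sentence: the exceptional inequalities are necessary to define $Q_A$ but not $Q_{A+\epsilon K_Z}$ for $\epsilon\ge\eta$. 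Note, incidentally, that when you apply Lemma~\ref{lem:polyZX} with $D+\epsilon K_X$ in place of $D$, that divisor need not be ample; the redundancy of the $E_j$-inequality still follows because $e_j$ is a non-negative combination of the primitive generators of a cone of $\mathbb{F}_X$, so the inequality $\langle m,e_j\rangle\ge -h_{D+\epsilon K_X}(e_j)$ is implied by the others regardless of ampleness.
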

\begin{proof}
Let $C_1$ be the open cone in $WDiv(X)_\Qbb$ of Lemma \ref{lem:HMMPendsXT}.
Let $D\in C_1$. Let $\eta$ be small enough such that $D-\eta K_X$ ample.

We have $WDiv(Z)_\Qbb=\phi^*(WDiv(X)_\Qbb)\oplus Vect(E_1,\dots,E_k)$, and $\phi^*(Nef(X))$ is a face of $Nef(Z)=\overline{Amp(Z)}$. 
Set $$A:=\phi^*(D-\eta K_X)+\sum_{i=1}^k b_iE_i.$$
There exists an open polyhedron $Pol_D$ of $\Qbb^k$ containing 0 in its boundary, such that for any $(b_1,\dots,b_k)\in Pol_D$
the divisor $A$ is ample and $d_j:=b_j+\eta a_j>0$ for 
all $j\in\{1,\dots,k\}$. 

Since $A+\eta K_Z=\phi^*(D)+\sum_{i=1}^k d_iE_i$, we have $Q_{A+\eta K_Z}=Q_D$ by Lemma \ref{lem:polyZX}. This, together with Lemma~\ref{lem:HMMPendsXT}, proves that the HMMP scaled by $A$  ends with the Mori fiber space $X/T$. Indeed, the inequalities coming from the exceptional divisors $E_i$ are necessary to define the polytope $Q_A$ but not the polytope $Q_{A+\eta K_Z}$ and the polytopes $Q_{A+\epsilon K_Z}$ with $\epsilon\geq \eta$.

Choose $U_X$ to be the set of  divisors $A=\phi^*(D-\eta K_X)+\sum_{i=1}^k b_iE_i$, with $D\in C_1$, $\eta>0$ such that $D-\eta K_X$ is ample and $(b_1,\dots,b_k)\in Pol_{D,\eta}$. Remark that by construction, since $Amp(X)$ and $Amp(Z)$ are polyhedral, the condition $D-\eta K_X$  ample is given by an inequality depending linearly on the coefficents of $D$, and the polyhedron $Pol_D$ is given by inequalities depending linearly on the coefficents of $D$ and $\delta$. In particular, $U_X$ is also an open polyhedron.




\end{proof}

\section{Two-parameters families of polytopes}\label{sec:poly}

In this section, we study some two-parameters families of polytopes. This section can be read independently from the rest of the paper, except for subsection~\ref{linkwithHMMP}. 

\subsection{A two-parameters family of polytopes: definitions and first properties}
Let $\mathbb K=\Qbb$ or $\Rbb$.
Let $n$ and $p$ be two positive integers. Denote by $I_0$ the set $\{1,\dots,p\}$.
Then two matrices $A\in M_{p\times n}(\Qbb)$ and $D\in M_{p\times 1}(\Qbb)$  define a polyhedron 
$$P:=\{x\in\mathbb K^n\,\mid\, Ax\geq D\}.$$
\begin{rem}\label{rem:bdd}
Suppose that $P$ is non-empty. Then $P$ is a polytope, that is a bounded polyhedron, if and only if the following condition is satisfied.
\begin{cond}\label{condBDD} There is no non-zero $x\in\Qbb^n$ satisfying $Ax\geq 0$. \end{cond}

The first implication is not difficult. Conversely, if  $P$ is not bounded, $P$ contains at least an affine half-line, and $x$ can be taken to be a generator of the direction of this half-line.

Note that Condition~\ref{condBDD} implies that $A$ is injective.
\end{rem}

To define a two-parameters family of polytopes, we fix $A$ satisfying Condition~\ref{condBDD} and we define divisors depending on two rational parameters $\delta$ and $\epsilon$.

Let $B$, $B'$ and $C$ in $M_{p\times 1}(\Qbb)$. Set $I_0:=\{1,\dots,p\}$ and define

$$
\begin{array}{rrcl}
 D\colon& \Qbb^2&\rightarrow&\Qbb^p\\
& (\delta,\epsilon)&\mapsto &(1-\delta)B+\delta B'+\epsilon C.
\end{array}
$$
 Note that $D$ is an affine map.

\begin{defi}

Given $A$, $B$, $B'$ and $C$ as above, we define for any $(\delta,\epsilon)\in\Qbb^2$:
$$P^{\delta,\epsilon}:=\{x\in\mathbb K^n\,\mid\, Ax\geq D(\delta,\epsilon)\}\cap \Qbb^n.$$
\end{defi}
We do not exclude the case where some lines of $A$ are zero. Notice that $P^{\delta,\epsilon}$ can be empty, even for all $(\delta,\epsilon)\in\Qbb^2$. 

Condition~\ref{condBDD} implies that for any $(\delta,\epsilon)\in\Qbb^2$,
the set $P^{\delta,\epsilon}$ is a polytope (possibly empty).

Now, we want to describe some equivalent classes of polytopes in this family, looking at their faces that correspond to lines of $A$. A face $F^{\delta,\epsilon}$ of $P^{\delta,\epsilon}$ is given by some equalities in $Ax\geq D(\delta,\epsilon)$ and then is associated to some $I\subseteq I_0$. We formalize this below.

For any matrix $\mathcal{M}$ and any $i\in I_0$, we denote by $\mathcal{M}_i$ the matrix consisting of the line $i$ of $\mathcal{M}$. More generally, for any subset $I$ of $I_0$ we denote by $\mathcal{M}_I$ the matrix consisting of the lines $i\in I$ of $\mathcal{M}$. For any subset $I$ of $I_0$, we can identify $D_I$ with the affine map
$$
\begin{array}{rrcl}
 D_I\colon& \Qbb^2&\rightarrow&\Qbb^{|I|}\\
& (\delta,\epsilon)&\mapsto &(1-\delta)B_I+\delta B'_I+\epsilon C_I.
\end{array}
$$

Let $(\delta,\epsilon)\in\Qbb^2$. Denote by $\mathcal{H}_i^{\delta,\epsilon}$ 
the hyperplane $\{x\in\Qbb^n\,\mid\, A_ix=D_i(\delta,\epsilon)\}$. 
For any $I\subseteq I_0$, denote by $F_I^{\delta,\epsilon}$ the face of $P^{\delta,\epsilon}$ defined by $$F_I^{\delta,\epsilon}:=(\bigcap_{i\in I}\mathcal{H}_i^{\delta,\epsilon})\cap P^{\delta,\epsilon}.$$ Note that for any face $F^{\delta,\epsilon}$ of $P^{\delta,\epsilon}$ there exists a unique maximal $I\subseteq I_0$ such that $F^{\delta,\epsilon}=F_I^{\delta,\epsilon}$ (we include the empty face and $P^{\delta,\epsilon}$ itself). \\

\begin{defi}\label{def:omega}
Let $I\subseteq I_0$. 
Define $\Omega_I$ to be the set of $(\delta,\epsilon)\in\Qbb^2$ such that $F_I^{\delta,\epsilon}$ is not empty. 
Define $\omega_I$ to be the subset of $\Omega_I$ such that, if $I'\subseteq I_0$ satisfies $F_I^{\delta,\epsilon}= F_{ I'}^{\delta,\epsilon}$, then $I'\subseteq I$.
\end{defi}

In other words, 
$$\Omega_I=\{(\delta,\epsilon)\vert\;\text{there is}\;x\in\Qbb^n\;\text{such that}\;A_Ix=D_I(\delta,\epsilon)\;\text{ and }\;Ax\geq D(\delta,\epsilon)\}$$
and
$$\omega_I=\{(\delta,\epsilon)\vert\;\text{there is}\;x\in\Qbb^n\;\text{such that}\;A_Ix=D_I(\delta,\epsilon)\;\text{ and }\;A_{\bar{I}}x>D_{\bar{I}}(\delta,\epsilon)\}.$$

To simplify the notation, we often write $i$ instead of ${\{i\}}$, for any $i\in I_0$.

\begin{rem}
If $(\delta,\epsilon)\in\omega_{\emptyset}$, then the polytope $P^{\delta,\epsilon}$ is of dimension~$n$ (i.e. has a non-empty interior). And, for any $i\in I_0$, if $(\delta,\epsilon)\in\omega_i$ and $A_i\neq 0$, then $F_i^{\delta,\epsilon}$ is a facet of $P^{\delta,\epsilon}$.\\ 
\end{rem}

\subsection{A polyhedral partition of $\Qbb^2$}

The following lemma describes the first properties of the sets $\Omega_I$ and $\omega_I$.

\begin{lem}\label{lem:convopen}
Let $I\subseteq I_0$.
\begin{enumerate}
\item The sets $\Omega_I$ and $\omega_I$ are convex subsets of $\Qbb^2$.

\item The set $\omega_I$ is open, for the euclidean topology, inside $\{(\delta,\epsilon)\vert\; D_I(\delta,\epsilon)\in\Im A_I\}=D_I^{-1}\Im A_I$.

\item There are four cases: either $\omega_I$ is empty, or it is a point, or it is a convex part of an affine line (a segment, a half-line or a line), or it is a non-empty open set in $\Qbb^2$.

\item If $\omega_I$ is not empty, we have  $\omega_I\subset\Omega_I\subset\overline{\omega_I}$.

\end{enumerate}

\end{lem}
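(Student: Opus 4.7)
The plan is to prove the four statements in sequence, reducing (1) and (4) to taking convex combinations of witnesses, establishing (2) via a continuity argument for strict inequalities, and obtaining (3) as a direct corollary of (1) and (2). Throughout, the key observations are that $D$ is an affine map of $(\delta,\epsilon)$ and that the defining conditions for $\Omega_I$ and $\omega_I$ are preserved under convex combinations.

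For (1), I would take two points $(\delta_1,\epsilon_1)$ and $(\delta_2,\epsilon_2)$ in $\Omega_I$ with witnesses $x_1, x_2 \in \Qbb^n$. Affineness of $D$ implies that the convex combination $x_t := t x_1 + (1-t) x_2$ witnesses the corresponding convex combination $t(\delta_1,\epsilon_1) + (1-t)(\delta_2,\epsilon_2)$, giving convexity of $\Omega_I$. The same argument applies to $\omega_I$, since strict inequalities are preserved under convex combinations of strictly positive quantities.

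For (2), I would fix a basepoint $(\delta_0,\epsilon_0) \in \omega_I$ with witness $x_0$, and lift nearby perturbations inside $D_I^{-1}(\Im A_I)$ to candidate witnesses. Concretely, choosing any linear right-inverse $\psi\colon \Im A_I \to \Qbb^n$ of $A_I$, I would set $x(\delta, \epsilon) := x_0 + \psi\bigl(D_I(\delta,\epsilon) - D_I(\delta_0,\epsilon_0)\bigr)$. By construction $A_I\, x(\delta,\epsilon) = D_I(\delta, \epsilon)$, while the affine dependence of $A_{\bar{I}}\, x(\delta, \epsilon) - D_{\bar{I}}(\delta, \epsilon)$ on $(\delta,\epsilon)$, combined with its strict positivity at $(\delta_0, \epsilon_0)$, propagates the strict inequality to an entire Euclidean neighborhood of $(\delta_0, \epsilon_0)$ inside $D_I^{-1}(\Im A_I)$. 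Claim (3) then follows immediately: since $D_I$ is affine, $D_I^{-1}(\Im A_I)$ is either empty or an affine subspace of $\Qbb^2$ of dimension $0$, $1$, or $2$; and by (1) and (2), $\omega_I$ is a relatively open convex subset thereof, which must fall into one of the listed shapes.

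For (4), the inclusion $\omega_I \subset \Omega_I$ is immediate. For $\Omega_I \subset \overline{\omega_I}$, I would fix any $(\delta_*,\epsilon_*) \in \omega_I$ (nonempty by hypothesis) with witness $x_*$, and for arbitrary $(\delta,\epsilon) \in \Omega_I$ with witness $x$, form $x_t := (1-t)x + t x_*$. Writing $A_{\bar{I}} x_t - D_{\bar{I}}((1-t)(\delta,\epsilon) + t(\delta_*,\epsilon_*)) = (1-t)\bigl(A_{\bar{I}} x - D_{\bar{I}}(\delta,\epsilon)\bigr) + t\bigl(A_{\bar{I}} x_* - D_{\bar{I}}(\delta_*,\epsilon_*)\bigr)$, the first term is $\geq 0$ and the second is $>0$ for $t>0$, so $(1-t)(\delta,\epsilon) + t(\delta_*,\epsilon_*)$ lies in $\omega_I$ for every $t \in (0,1]$, and letting $t\to 0$ yields $(\delta,\epsilon) \in \overline{\omega_I}$. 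The main subtlety lies in step (2), where building a continuous (in fact affine) family of witnesses parametrized by $(\delta,\epsilon)$ is what turns the pointwise existence in the definition of $\omega_I$ into an honest topological openness statement.
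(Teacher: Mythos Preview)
Your proof is correct and follows essentially the same strategy as the paper: parts (1), (3), and (4) are identical in spirit (convex combinations of witnesses, with (3) read off from (1) and (2)). The only cosmetic difference is in (2): the paper splits into cases according to $\dim D_I^{-1}(\Im A_I)$ and picks auxiliary points $(\delta_i,\epsilon_i)$ with preimages $x_i$ to form affine combinations, whereas your choice of a linear right-inverse $\psi$ packages the same construction uniformly and avoids the case analysis.
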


\begin{proof}
\begin{enumerate}
\item If $\Omega_I$ is either empty or has cardinality 1, the statement is obvious.
We assume then that $\Omega_I$ has at least two points, 
 $(\delta_1,\epsilon_1)$ and $(\delta_2,\epsilon_2)$. Then there exist $x_1$ and $x_2$ in $\Qbb^n$ such that $A_I x_i=D_I(\delta_i,\epsilon_i)$  
and $A_{\bar{I}}x_i\geq D_{\bar{I}}(\delta_i,\epsilon_i)$ for $i=1,2$.
For any rational number $t\in [0,1]$, we get 
$A_I(t x_1+(1-t)x_2)=D_I(t\epsilon_1+(1-t)\epsilon_2,t\delta_1+(1-t)\delta_2)$ 
and $A_{\bar{I}}(t x_1+(1-t)x_2)\geq D_{\bar{I}}(t\epsilon_1+(1-t)\epsilon_2,t\delta_1+(1-t)\delta_2)$, 
so that $t(\delta_1,\epsilon_1)+(1-t)(\delta_2,\epsilon_2)$ is in $\Omega_I$.

Replacing $\geq$ by $>$, we prove the convexity of $\omega_I$.

\item The inclusion  $\omega_I\subseteq D_I^{-1}\Im A_I$ follows from the definition of  $\omega_I$.

\noindent If $D_I^{-1}\Im A_I$ is a point or if $\omega_{I}$ is empty, there is nothing to prove.

\noindent Let $(\delta_0,\epsilon_0)\in \omega_I$.
Then there is $x_0$ such that $A_I x_0=D_I(\delta_0,\epsilon_0)$ and $A_{\bar I} x_0>D_{\bar I}(\delta_0,\epsilon_0)$.

Assume that $D_I^{-1}\Im A_I$ is one-dimensional. 
Let $(\delta_1,\epsilon_1)\neq (\delta_0,\epsilon_0)$ in $D_I^{-1}\Im A_I$  and let $x_1$ be such that $A_I x_1=D_I(\delta_1,\epsilon_1)$.
Then $A_I((1-t)x_0+tx_1)=D_I((1-t)\epsilon_0+t\epsilon_1,(1-t)\delta_0+t\delta_1)$ and for $t\in\Qbb$ small enough $A_{\bar{I}}((1-t)x_0+tx_1)>D_{\bar{I}}((1-t)\epsilon_0+t\epsilon_1,(1-t)\delta_0+t\delta_1)$.

Assume that $D_I^{-1}\Im A_I$ is two-dimensional. 
Let $(\delta_1,\epsilon_1)$ and $(\delta_2,\epsilon_2)$ in $D_I^{-1}\Im A_I$ such that the three  $(\delta_i,\epsilon_i)$'s are not in the same line.  Let $x_1$ and $x_2$ be such that $A_I x_1=D_I(\delta_1,\epsilon_1)$ and $A_I x_2=D_I(\delta_2,\epsilon_2)$.
Then $$A_I((1-t_1-t_2)x_0+t_1x_1+t_2x_2)=D_I((1-t_1-t_2)\epsilon_0+t_1\epsilon_1+t_2\epsilon_2,(1-t_1-t_2)\delta_0+t_1\delta_1+t_2\delta_2)$$ and for $(t_1,t_2)$ in a neighborhood of 0 in $\Qbb^2$, we have   $$A_{\bar{I}}((1-t_1-t_2)x_0+t_1x_1+t_2x_2)>D_{\bar{I}}((1-t_1-t_2)\epsilon_0+t_1\epsilon_1+t_2\epsilon_2,(1-t_1-t_2)\delta_0+t_1\delta_1+t_2\delta_2).$$

This concludes the proof of the statement.

\item It follows from the two previous statements.

\item The first inclusion is obvious. 

To prove the second one, remark that for all $(\delta_1,\epsilon_1)\in\omega_I$ and $(\delta_2,\epsilon_2)\in\Omega_I$, the segment 
$\{(1-t)(\delta_1,\epsilon_1)+t(\delta_2,\epsilon_2)|\;t\in[0,1)\}$ is contained in $\omega_I$. 
Indeed, let $x_1$ and $x_2$ be such that $A_Ix_i=D_I(\delta_i,\epsilon_i)$, $A_{\bar{I}}x_i> D_{\bar{I}}(\delta_i,\epsilon_i)$. 
Then, for any $t\in(0,1]$, we set $x_t:=t x_1+(1-t)x_2$ and $(\delta_t,\epsilon_t):=(t\epsilon_1+(1-t)\epsilon_2,t\delta_1+(1-t)\delta_2)$.
Thus $A_Ix_t=D_I(\delta_t,\epsilon_t)$ and $A_{\bar{I}}x_t> D_{\bar{I}}(\delta_t,\epsilon_t)$.

This remark implies directly that if $(\delta_2,\epsilon_2)\in\Omega_I$ then $(\delta_2,\epsilon_2)\in\overline{\omega_I}$, as soon as $\omega_I$ is not empty.
\end{enumerate}
\end{proof}

\begin{rem}

 Lemma \ref{lem:convopen} is still true if we consider polytopes in an $\Rbb$-vector space. The proof is the same after replacing $\Qbb$ by $\Rbb$ everywhere.
 
\end{rem}

\begin{lem}\label{lem:closed}
Let $I\subseteq I_0$ be such that $\omega_I$ is not empty. Then $\Omega_I=\overline{\omega_I}$ and $\Omega_I$ is polyhedral in $\Qbb^2$.
\end{lem}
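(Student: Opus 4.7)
The plan is to prove that $\Omega_I$ is polyhedral by exhibiting it as the image of a rational polyhedron under a linear projection; the equality $\Omega_I = \overline{\omega_I}$ then follows at once, since the inclusion $\Omega_I \subseteq \overline{\omega_I}$ is already provided by Lemma~\ref{lem:convopen}(4) and the reverse inclusion reduces to the fact that a polyhedron is closed.

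For the main step, I would consider
$$
\tilde\Omega_I := \bigl\{(\delta,\epsilon,x) \in \Qbb^{2+n} \,\bigm|\, A_I x = D_I(\delta,\epsilon),\ A x \geq D(\delta,\epsilon)\bigr\}.
$$
Since the map $D\colon(\delta,\epsilon)\mapsto (1-\delta)B+\delta B'+\epsilon C$ is affine with rational coefficients, the conditions cutting out $\tilde\Omega_I$ are finitely many rational affine (in)equalities in the variables $(\delta,\epsilon,x)$. Thus $\tilde\Omega_I$ is a rational polyhedron in $\Qbb^{2+n}$, and by the very description of $\Omega_I$ recalled in Definition~\ref{def:omega}, $\Omega_I$ coincides with the image of $\tilde\Omega_I$ under the linear projection onto the first two coordinates. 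Applying Fourier--Motzkin elimination to the $n$ coordinates of $x$ one after the other, this image is cut out by finitely many rational affine inequalities in $(\delta,\epsilon)$, so $\Omega_I$ is polyhedral in $\Qbb^2$. In particular $\Omega_I$ is the intersection of finitely many $\Qbb$-closed half-planes, hence closed in $\Qbb^2$ for the Euclidean topology, so $\overline{\omega_I}\subseteq \Omega_I$; combined with Lemma~\ref{lem:convopen}(4), this gives $\Omega_I = \overline{\omega_I}$.

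The main obstacle is the Fourier--Motzkin step: one must keep track simultaneously of the equalities $A_I x = D_I(\delta,\epsilon)$ (which can be split into two opposite inequalities) and of the inequalities $A x \geq D(\delta,\epsilon)$, and check that rationality is preserved when the $x$-variables are eliminated. Since elimination only performs rational positive linear combinations of the defining inequalities, this is classical, and no input beyond the projection theorem for rational polyhedra is needed.
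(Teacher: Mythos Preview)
Your proof is correct and takes a genuinely different route from the paper's. You exhibit $\Omega_I$ as the linear projection of the rational polyhedron $\tilde\Omega_I\subseteq\Qbb^{2+n}$ and invoke Fourier--Motzkin elimination (equivalently, the projection theorem for rational polyhedra) to conclude that $\Omega_I$ is a rational polyhedron in $\Qbb^2$, hence closed; the equality $\Omega_I=\overline{\omega_I}$ then follows immediately from Lemma~\ref{lem:convopen}(4). The paper instead passes to $\Rbb$, proves closedness of $\Omega_I(\Rbb)$ by a sequential compactness argument (using that all the $P^{\delta,\epsilon}$ for $(\delta,\epsilon)$ in a compact set sit inside a common bounded polytope $P^K$), deduces polyhedrality by identifying the boundary of $\Omega_I(\Rbb)$ with the union $\bigcup_{I\subsetneq I'}\Omega_{I'}(\Rbb)$ of lower-dimensional pieces, and finally checks by hand that the vertices and edge directions are rational before descending back to $\Qbb^2$. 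Your argument is shorter and more conceptual, relying only on a standard fact from polyhedral geometry; the paper's argument is more self-contained and, as a byproduct, makes explicit the stratification of the boundary of $\Omega_I$ by the $\Omega_{I'}$ with $I'\supsetneq I$, which is used elsewhere in Section~\ref{sec:poly}.
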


\begin{proof}

If $\omega_I$ is reduced to a point, by Lemma~\ref{lem:convopen} we have nothing to prove, so we suppose that $\omega_{I}$ contains at least two points.\\

For $(\delta,\epsilon)\in\Rbb^2$ we also set
$$P^{\delta,\epsilon}:=\{x\in\Rbb^n\,\mid\, Ax\geq (1-\delta)B+\delta B'+\epsilon C\}.$$
and we define as before $F_I^{\delta,\epsilon}$.

For $I\subseteq I_0$ we set 
$$\Omega_I(\Rbb)=\{(\delta,\epsilon)\in\Rbb^2\,\mid\,F_I^{\delta,\epsilon}\neq\emptyset\}$$ 
and
$$\omega_I(\Rbb)=\{(\delta,\epsilon)\in\Rbb^2\,\mid\,\text{if }I'\subseteq I_0\text{ satisfies }F_I^{\delta,\epsilon}= F_{ I'}^{\delta,\epsilon}\text{, then }I'\subseteq I\}.$$

Since the matrices $A$, $B$, $B'$ and $C$ have rational coefficients, if $(\delta,\epsilon)\in\Qbb^2$ and 
if there exists $x\in\Rbb^n$, such that $A_Ix=D_I(\delta,\epsilon)$, then there is $x'\in\Qbb^n$, such that $A_Ix'=D_I(\delta,\epsilon)$; and $x'$ can be chosen arbitrarily close to $x$. 
In particular, if $A_{\bar{I}}x>D_{\bar{I}}(\delta,\epsilon)$, then  $x'$ can be chosen such that $A_{\bar{I}}x'>D_{\bar{I}}(\delta,\epsilon)$. 
Hence, $\omega_I=\omega_I(\Rbb)\cap\Qbb^2$. 

Moreover, $\Omega_I=\bigcup_{I\subseteq I'\subseteq I_0}\omega_{I'}$ (over $\Rbb$ and $\Qbb$), then we also have $\Omega_I(\Qbb)=\Omega_I(\Rbb)\cap\Qbb^2$.\\

We first prove that $\Omega_I(\Rbb)$ is closed.
This, together with Lemma \ref{lem:convopen} will imply that $\Omega_I(\Rbb)=\overline{\omega_I(\Rbb)}$. 

Let $(\delta_k,\epsilon_k)_{k\in\Nbb}$ be a sequence of elements in $\Omega_I(\Rbb)$ converging to $(\bar\delta,\bar\epsilon)$ in $\Rbb^2$.
The elements  $(\delta_k,\epsilon_k)$ are contained in a compact set $K$ of $\Rbb^2$. Then, for every $k\in\Nbb$, the polytope $P^{\delta_k,\epsilon_k}$ is contained in the polytope $$P^K:=\{x\in\Rbb^n\,\mid\,Ax\geq\Min_{(\delta,\epsilon)\in K}D(\delta,\epsilon)\}$$
where $\Min_{(\delta,\epsilon)\in K}D(\delta,\epsilon)$ is the vector whose $i$-th coordinate is $\Min_{(\delta,\epsilon)\in K}D(\delta,\epsilon)_i$.
The set $P_K$ is compact by Remark \ref{rem:bdd}.

By definition, for any $k\in\Nbb$, there exists $x_k\in \Rbb^n$ such that $A_Ix_k=D_I(\delta_k,\epsilon_k)$ and $A_{\bar{I}}x_k\geq D_{\bar{I}}(\delta_k,\epsilon_k)$. 
Since the $x_k\in P^K$ that is compact, 
there is $\{k_m\}_{m\in\Nbb}$ such that $k_m\to\infty$ as $m$ tends to infinity, such that $(x_{k_m})_{m\in\Nbb}$ converges to $\bar x\in\Rbb$. Then $\bar x$ satisfies $A_I\bar x=D_I(\bar\delta,\bar\epsilon)$ 
and $A_{\bar{I}}\bar x\geq D_{\bar{I}}(\bar\delta,\bar\epsilon)$. 
This implies that $(\bar\delta,\bar\epsilon)$ is also in $\Omega_I$.\\

We now prove that $\Omega_I(\Rbb)$ is polyhedral. If $\omega_I(\Rbb)$ is either empty, or a point, or a convex part of an affine line, then there is nothing to prove. 
Suppose that $\omega_I(\Rbb)$ is open in $\Rbb^2$. Then the boundary of $\Omega_I(\Rbb)$ is $$\Omega_I(\Rbb)\backslash\omega_I(\Rbb)=\bigcup_{I\subsetneq I'\subseteq I_0}\omega_{I'}(\Rbb)=\bigcup_{I\subsetneq I'\subseteq I_0}\Omega_{I'}(\Rbb).$$ 
But for every $I'$ such that $I\subsetneq I'\subseteq I_0$, the set $\Omega_{I'}(\Rbb)$ is either empty, or a point, or a closed convex part of an affine line. 
We proved that the boundary of $\Omega_I(\Rbb)$ is a finite union of closed convex parts of affine lines.
Hence, $\Omega_I(\Rbb)$ is polyhedral in $\Rbb^2$.\\

To conclude, we have to prove that the vertices of $\Omega_I(\Rbb)$ are rational and that the maximal half lines in the boundary of $\Omega_I(\Rbb)$ are rational half-lines, that is, they have a rational extremity and rational direction. 

A vertex of $\Omega_I(\Rbb)$ is $(\delta,\epsilon)$ such that there is $I'\neq I$ with $\{(\delta,\epsilon)\}=\omega_{I'}(\Rbb)$. 
By Lemma~\ref{lem:convopen}
we have $\{(\delta,\epsilon)\}=D_{I'}^{-1}\Im A_I\in\Qbb^2$.

In the same way, a component of the boundary is of the form 
$\omega_{I'}(\Rbb)$ with $I'\neq I$.
By Lemma \ref{lem:convopen}, we have $\omega_{I'}(\Rbb)\subseteq D_{I'}^{-1}\Im A_I $ open.
The latter is an affine subspace defined over $\Qbb$, thus its direction is rational.
The extremities are vertices of $\Omega_{I}(\Rbb)$ and are then rational. 
\end{proof}

We now study inclusions between the $\omega$'s, generalizing the easy following fact: for any $J\subseteq I\subseteq I_0$, we have $\Omega_I\subseteq \Omega_J$.

\begin{lem}\label{lem:inclusions}
Let $J\subseteq I\subseteq I_0$, such that $\omega_I\neq \emptyset$.
\begin{enumerate}
\item If $\omega_J$ is not empty and has the same dimension of $\omega_I$, then $\omega_I\subseteq \omega_J$.
\item If the images of $A_I$ and $A_J$ have the same codimension $k$, then  $\omega_J$ is not empty.
\item There exists $J\subseteq I$ as above and such that $\omega_I\subseteq \omega_J$ and for any $j\in J$ the codimension of the image of $A_{J\backslash\{j\}}$ is less than $k$.
\end{enumerate}
\end{lem}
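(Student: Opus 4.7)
My plan is to handle the three parts in the order (1), (2), (3), since (1) is essentially a formal topological argument, (2) requires the key rank-nullity computation, and (3) is a straightforward iteration based on (2).

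For part~(1), I would start by observing that $J\subseteq I$ induces a canonical inclusion of affine subspaces $V_I := D_I^{-1}(\Im A_I)\subseteq V_J := D_J^{-1}(\Im A_J)$ in $\Qbb^2$ (both $A_J$ and $D_J$ factor through the projection that forgets the $I\setminus J$-coordinates). By Lemma~\ref{lem:convopen}(2), $\omega_I$ and $\omega_J$ are nonempty open subsets of $V_I$ and $V_J$ respectively, so $\dim V_I=\dim\omega_I$ and $\dim V_J=\dim\omega_J$; the equal-dimension hypothesis then forces $V_I=V_J$. The chain $\omega_I\subseteq\Omega_I\subseteq\Omega_J=\overline{\omega_J}$ from Lemma~\ref{lem:closed}, combined with the standard fact that for a convex open subset $\omega_J$ of the affine space $V_J$ the interior of $\overline{\omega_J}$ (taken inside $V_J$) coincides with $\omega_J$ itself, then yields $\omega_I\subseteq\omega_J$: $\omega_I$ is an open subset of $V_J=V_I$ lying inside $\overline{\omega_J}$, hence inside its interior.

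Part~(2) is the technical heart of the lemma. The hypothesis rephrases as $\operatorname{rank}(A_I)-\operatorname{rank}(A_J)=|I|-|J|$. I would introduce the auxiliary map
\[
\phi\colon \ker A_J\longrightarrow \Qbb^{I\setminus J},\qquad v\longmapsto (A_i v)_{i\in I\setminus J},
\]
whose kernel is exactly $\ker A_I$. Rank-nullity then gives $\dim\Im\phi=\operatorname{rank}(A_I)-\operatorname{rank}(A_J)=|I\setminus J|$, so $\phi$ is surjective, and one can pick a rational $v\in\ker A_J$ with $A_i v>0$ for every $i\in I\setminus J$. For any $(\delta,\epsilon)\in\omega_I$ with rational witness $x_0$, the perturbation $x_0+tv$ for sufficiently small rational $t>0$ satisfies $A_J(x_0+tv)=D_J(\delta,\epsilon)$, strict inequality on the indices in $I\setminus J$ by construction of $v$, and strict inequality on the remaining indices by continuity from the strict inequalities already holding at $x_0$. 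This proves $(\delta,\epsilon)\in\omega_J$, hence $\omega_I\subseteq\omega_J$, and in particular $\omega_J\neq\emptyset$.

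For part~(3), I would prune iteratively. Start with $J^{(0)}:=I$, for which $\codim\Im A_{J^{(0)}}=k$ and trivially $\omega_I\subseteq\omega_{J^{(0)}}$. Whenever some $j\in J^{(s)}$ satisfies $\codim\Im A_{J^{(s)}\setminus\{j\}}=k$, set $J^{(s+1)}:=J^{(s)}\setminus\{j\}$; applying part~(2) to the pair $(J^{(s+1)},J^{(s)})$ (in the roles of $J\subseteq I$) yields $\omega_{J^{(s)}}\subseteq\omega_{J^{(s+1)}}$, and by transitivity $\omega_I\subseteq\omega_{J^{(s+1)}}$. Since $|J^{(s)}|$ strictly decreases, the process terminates, and the terminal $J$ satisfies the required minimality property. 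The main obstacle, and the only really non-routine step, is identifying the right reformulation in part~(2): the equal-codimension hypothesis does not obviously provide a direction in which a witness of $\omega_I$ can be perturbed into $\omega_J$, but the surjectivity of $\phi$ is \emph{equivalent} to the rank identity, and once this is spotted the rest of the proof is essentially forced.
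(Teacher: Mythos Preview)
Your argument is correct. Parts~(1) and~(3) match the paper's proof in substance: part~(1) is the same ``interior of the closure of a convex open set'' argument (you just make the common ambient affine space $V_I=V_J$ explicit, whereas the paper leaves it implicit in the phrase ``$\omega_J$ is the interior part of $\Omega_J$''), and part~(3) is the same minimality/pruning idea.

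The genuine difference is in part~(2). The paper argues by induction on $|I\setminus J|$: at each step it sets $J'=J\cup\{i\}$, uses that $\ker A_J\supsetneq \ker A_{J'}$ (which follows from the equal-codimension hypothesis along the chain $J\subseteq J'\subseteq I$) to find a single direction $y$ with $A_iy>0$, and perturbs a witness for $\omega_{J'}$ into one for $\omega_J$. You instead observe that the equal-codimension hypothesis is \emph{equivalent} to the surjectivity of $\phi\colon\ker A_J\to\Qbb^{I\setminus J}$, which lets you produce in one stroke a $v\in\ker A_J$ with $A_iv>0$ for all $i\in I\setminus J$ simultaneously. This is cleaner, avoids the induction, and immediately yields the stronger conclusion $\omega_I\subseteq\omega_J$ rather than merely $\omega_J\neq\emptyset$; that stronger conclusion is exactly what part~(3) needs, so your route makes the logical dependence of (3) on (2) more transparent than the paper's, where the inclusion is only implicit in the inductive proof.
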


\begin{proof}
\begin{enumerate}
\item If $\omega_J$ is not empty, Lemma~\ref{lem:closed} implies that $\omega_J$ is the interior part $\Omega_J$. Since $\omega_I$ is not empty, it is as well the interior part of $\Omega_I$. As $\Omega_I\subseteq \Omega_J$, we conclude that $\omega_I\subseteq \omega_J$.
\item We prove the claim by induction on $h=|I\setminus J|$.
If $h=0$ the claim is true. 
We assume now that the claim is true for $h$ and let $J$ be such that $|I\setminus J|=h+1$.
Let $i\in I\backslash J$ and $J'=J\cup\{i\}$. 
By inductive hypothesis $\omega_{J'}$ is not empty.
Let $(\delta,\epsilon)\in\omega_{J'}$ and let $x\in\Qbb^n$ such that $A_{J'}x=D_{J'}(\delta,\epsilon)$ and $A_{\bar{{J'}}}x>D_{\bar{{J'}}}(\delta,\epsilon)$. 
By the hypothesis on the images of $A_{J'}$ and $A_J$, there is 
$y\in\Ker(A_{J})\backslash\Ker(A_{J'})$. We can choose $y$ such that $A_iy>0$. 
Then for $t>0$ small enough, we have $A_{J}(x+ty)=D_{J}(\delta,\epsilon)$, $A_i(x+ty)>D_i(\delta,\epsilon)$ and $A_{\bar{J'}}(x+ty)>D_{\bar{J'}}(\delta,\epsilon)$. 
This proves that $(\delta,\epsilon)\in\omega_{J}$.
\item It is enough to take $J$ minimal such that the images of $A_I$ and $A_J$ have the same codimension, and apply the previous statement. 
\end{enumerate}
\end{proof}

\begin{lem}\label{lem:inclusionclosure}
 Let $I\subseteq I_0$ be such that $\omega_I=\{(\delta_0,\epsilon_0)\}$. Suppose that the image of $A_I$ is of codimension~2.
There is $i\in I$ such that the image of $A_{I\backslash \{i\}}$ is of codimension~1. 
For any such $i$, the point $(\delta_0,\epsilon_0)$ belongs to $\Omega_{I\backslash \{i\}}\backslash \omega_{I\backslash \{i\}}$.


\end{lem}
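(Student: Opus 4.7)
The plan has three steps: first establish the existence of $i$ via a rank count, then observe that membership in $\Omega_{I\setminus\{i\}}$ comes for free from the witness in $\omega_I$, and finally derive a contradiction to rule out membership in $\omega_{I\setminus\{i\}}$ using the linear dependence among rows produced in Step 1.

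For the existence of $i$, note that $\codim \Im A_I = 2$ means $\rank A_I = |I|-2$, so the rows $(A_j)_{j\in I}$ span a $2$-dimensional space of linear relations. Choose any nonzero relation $\sum_{j\in I}\lambda_j A_j = 0$ and any index $i\in I$ with $\lambda_i \neq 0$: this exhibits $A_i$ as a linear combination of $(A_j)_{j\in I\setminus\{i\}}$, so $\rank A_{I\setminus\{i\}} = \rank A_I$, giving $\codim\Im A_{I\setminus\{i\}} = (|I|-1)-(|I|-2)=1$. Fix such an $i$ and write $A_i = \sum_{j\in I\setminus\{i\}} c_j A_j$ for some $c_j\in\Qbb$.

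For $(\delta_0,\epsilon_0)\in\Omega_{I\setminus\{i\}}$, pick $x_0\in\Qbb^n$ witnessing $(\delta_0,\epsilon_0)\in\omega_I$, so $A_I x_0 = D_I(\delta_0,\epsilon_0)$ and $A_{\bar I} x_0 > D_{\bar I}(\delta_0,\epsilon_0)$. The same $x_0$ satisfies $A_{I\setminus\{i\}} x_0 = D_{I\setminus\{i\}}(\delta_0,\epsilon_0)$ and $A_j x_0 \geq D_j(\delta_0,\epsilon_0)$ for every $j\notin I\setminus\{i\}$ (with equality when $j=i$ and strict inequality when $j\in\bar I$), so $(\delta_0,\epsilon_0)\in\Omega_{I\setminus\{i\}}$.

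The main step is showing $(\delta_0,\epsilon_0)\notin\omega_{I\setminus\{i\}}$. Suppose otherwise; then there is some $x\in\Qbb^n$ with $A_{I\setminus\{i\}} x = D_{I\setminus\{i\}}(\delta_0,\epsilon_0)$ and, in particular, $A_i x > D_i(\delta_0,\epsilon_0)$. Applying the relation $A_i = \sum_{j\in I\setminus\{i\}} c_j A_j$ to $x$ yields
\[
A_i x \;=\; \sum_{j\in I\setminus\{i\}} c_j A_j x \;=\; \sum_{j\in I\setminus\{i\}} c_j D_j(\delta_0,\epsilon_0).
\]
On the other hand, applying the same relation to the witness $x_0$ (which satisfies $A_j x_0 = D_j(\delta_0,\epsilon_0)$ for every $j\in I$) gives both $A_i x_0 = D_i(\delta_0,\epsilon_0)$ and $A_i x_0 = \sum_{j\in I\setminus\{i\}} c_j D_j(\delta_0,\epsilon_0)$. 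Combining these identities forces $A_i x = D_i(\delta_0,\epsilon_0)$, contradicting the strict inequality $A_i x > D_i(\delta_0,\epsilon_0)$.

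The main obstacle is the last step: one must see that the linear dependence $A_i=\sum c_j A_j$ among the rows transfers, at the particular point $(\delta_0,\epsilon_0)$, into a linear dependence among the affine values $D_j(\delta_0,\epsilon_0)$. This transfer is exactly what the hypothesis $(\delta_0,\epsilon_0)\in\omega_I$ supplies, through the witness $x_0$ on which all constraints indexed by $I$ are simultaneously tight; this is what forces equality $A_i x = D_i(\delta_0,\epsilon_0)$ on every point of the face $F_{I\setminus\{i\}}^{\delta_0,\epsilon_0}$ and thereby prevents $\omega_{I\setminus\{i\}}$ from containing $(\delta_0,\epsilon_0)$.
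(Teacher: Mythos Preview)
Your proof is correct. The existence of $i$ and the argument that $(\delta_0,\epsilon_0)\notin\omega_{I\setminus\{i\}}$ are essentially identical to the paper's: both use the linear dependence $A_i=\sum_{j\in I\setminus\{i\}}c_jA_j$ together with the witness $x_0\in\omega_I$ to force $A_ix=D_i(\delta_0,\epsilon_0)$ on any point $x$ satisfying $A_{I\setminus\{i\}}x=D_{I\setminus\{i\}}(\delta_0,\epsilon_0)$.

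The one difference is in how membership in $\Omega_{I\setminus\{i\}}$ is established. You use the trivial inclusion $\omega_I\subseteq\Omega_I\subseteq\Omega_{I\setminus\{i\}}$, which suffices for the statement. The paper instead proves the stronger fact that $(\delta_0,\epsilon_0)$ lies in the closure of $\omega_{I\setminus\{i\}}$: it picks $(\delta_1,\epsilon_1)\neq(\delta_0,\epsilon_0)$ with $D_{I\setminus\{i\}}(\delta_1,\epsilon_1)\in\Im A_{I\setminus\{i\}}$, interpolates linearly, and shows that for small $t$ of one sign the interpolated point lies in $\omega_{I\setminus\{i\}}$. This extra work establishes that $\omega_{I\setminus\{i\}}$ is non-empty and has $(\delta_0,\epsilon_0)$ as an extremity, a fact the paper relies on later (e.g.\ in Proposition~\ref{pro:rays} and in the discussion preceding Proposition~\ref{prop:morphXZ}). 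Your shortcut is cleaner for the lemma as stated, but be aware that subsequent arguments in the paper invoke this lemma for the stronger conclusion its proof actually delivers.
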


\begin{proof}
 Let $i\in I$ be such that there exists rational numbers $\lambda_j$ for $j\in I\backslash\{i\}$ such that $A_i=\sum_{j\in I\backslash\{i\}}\lambda_jA_j$.  Then the image of $A_{I\backslash \{i\}}$ is of codimension one.
Let $(\delta_1,\epsilon_1)\neq(\delta_0,\epsilon_0)$ such that $D_{I\backslash \{i\}}(\delta_1,\epsilon_1)$ is in $\Im(A_{I\backslash \{i\}})$. 
Let $x$ and $y$ in $\Qbb^n$ such that $A_Ix=D_I(\delta_0,\epsilon_0)$, $A_{\bar{I}}x>D_{\bar{I}}(\delta_0,\epsilon_0)$ and $A_{I\backslash\{i\}}y=D_{I\backslash\{i\}}(\delta_1,\epsilon_1)$. For any $t\in\Qbb$, define $z_t:=(1-t)x+ty$ and $(\delta_t,\epsilon_t):=(1-t)(\delta_0,\epsilon_0)+t(\delta_1,\epsilon_1)$. Then for any $t$ we have $A_{I\backslash\{i\}}z_t=D_{I\backslash\{i\}}(\delta_t,\epsilon_t)$ and for any $t$ small enough we have $A_{\bar{I}}z_t>D_{\bar{I}}(\delta_t,\epsilon_t)$. Now if $A_iy>0$ we have $A_iz_t>D_i(\delta_t,\epsilon_t)$ for any $t>0$; and if $A_iy<0$ we have $A_iz_t>D_i(\delta_t,\epsilon_t)$ for any $t<0$.
Hence, for any $t$ small enough, either positive or negative, we have $A_{\bar{I\backslash\{i\}}}z_t>D_{\bar{I\backslash\{i\}}}(\delta_t,\epsilon_t)$ and $(\delta_t,\epsilon_t)$ is in $\omega_{I\backslash \{i\}}$. We have proved that $\omega_I$ is in the closure of $\omega_{I\backslash \{i\}}$.

 But $\omega_I=\{(\delta_0,\epsilon_0)\}$ cannot be in $\omega_{I\backslash \{i\}}$, because the relation $A_i=\sum_{j\in I\backslash\{i\}}\lambda_jA_j$ implies that if $A_{I\backslash\{i\}}x=D_{I\backslash\{i\}}(\delta_0,\epsilon_0)$ then $A_Ix=D_I(\delta_0,\epsilon_0)$. Hence $\omega_I$ is in  $\Omega_{I\backslash \{i\}}\backslash \omega_{I\backslash \{i\}}$.

\end{proof}

\begin{rem} A similar result could be proved from a one-dimensional $\omega_I$. Let $I\subset I_0$ such that $\omega_I$ is an open convex part of an affine line. Suppose that the image of $A_I$ is of codimension~1.
Then there exists $i\in I$ such that $\omega_I$ is a subset of $\Omega_{I\backslash \{i\}}\backslash \omega_{I\backslash \{i\}}$  and $A_{I\backslash \{i\}}$ is surjective.
\end{rem}

\begin{lem}\label{lem:intersectionIJ}
Let $I$ and $J$ such that $\omega_I$ and $\omega_J$ are not empty. Then $\omega_{I\cap J}$ is not empty and contains the strict convex hull of any element of $\omega_I$ with any element of $\omega_J$.
\end{lem}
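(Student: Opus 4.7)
The statement is really a direct convex-combination argument, so I would prove the nonemptiness of $\omega_{I\cap J}$ simultaneously with the claim about the strict convex hull. Pick $(\delta_1,\epsilon_1)\in\omega_I$ and $(\delta_2,\epsilon_2)\in\omega_J$. By the characterization of $\omega_I$ recalled just after Definition~\ref{def:omega}, there exist $x_1,x_2\in\Qbb^n$ with
$$A_Ix_1=D_I(\delta_1,\epsilon_1),\quad A_{\bar I}x_1>D_{\bar I}(\delta_1,\epsilon_1),$$
$$A_Jx_2=D_J(\delta_2,\epsilon_2),\quad A_{\bar J}x_2>D_{\bar J}(\delta_2,\epsilon_2).$$
For any $t\in(0,1)\cap\Qbb$, set $x_t:=(1-t)x_1+tx_2$ and $(\delta_t,\epsilon_t):=(1-t)(\delta_1,\epsilon_1)+t(\delta_2,\epsilon_2)$. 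Since $D$ is affine, $D_i(\delta_t,\epsilon_t)=(1-t)D_i(\delta_1,\epsilon_1)+tD_i(\delta_2,\epsilon_2)$ for every $i\in I_0$.

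The next step is a case analysis on $i\in I_0$ depending on whether $i$ lies in $I\cap J$, $I\setminus J$, $J\setminus I$, or outside $I\cup J$. For $i\in I\cap J$ both constituent relations are equalities, so $A_ix_t=D_i(\delta_t,\epsilon_t)$. For $i\notin I\cup J$ both are strict inequalities, so any convex combination preserves strict inequality. For $i\in I\setminus J$ one has equality at $x_1$ and strict inequality at $x_2$, so $A_ix_t>D_i(\delta_t,\epsilon_t)$ as soon as $t>0$; symmetrically, for $i\in J\setminus I$ the inequality is strict as soon as $t<1$. This is exactly where I use $t\in(0,1)$ rather than $t\in[0,1]$, and this is the content of the expression ``strict convex hull'' in the statement.

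Collecting these three cases, $A_{I\cap J}x_t=D_{I\cap J}(\delta_t,\epsilon_t)$ and $A_{\overline{I\cap J}}x_t>D_{\overline{I\cap J}}(\delta_t,\epsilon_t)$, so $(\delta_t,\epsilon_t)\in\omega_{I\cap J}$ for every $t\in(0,1)\cap\Qbb$. In particular $\omega_{I\cap J}\neq\emptyset$, and it contains the open segment joining $(\delta_1,\epsilon_1)$ to $(\delta_2,\epsilon_2)$. There is no real obstacle here, only bookkeeping on the index set; the only point to be careful about is to take $t$ strictly in $(0,1)$ so that both halves of the argument work simultaneously, which matches the strictness qualifier in the lemma.
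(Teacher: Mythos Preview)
Your proof is correct and follows essentially the same approach as the paper's own proof: pick witnesses $x_1,x_2$ for the two points, form the convex combination, and check the equalities and strict inequalities index by index. Your case analysis on $I\cap J$, $I\setminus J$, $J\setminus I$, and $I_0\setminus(I\cup J)$ is in fact more explicit than the paper's version, which simply asserts the conclusion for $z_t$ without spelling out the four cases.
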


\begin{proof}
Let $x$ and $y$ in $\Qbb^n$ be such that $A_Ix=D_I(\delta_1,\epsilon_1)$, $A_{\bar{I}}x>D_{\bar{I}}(\delta_1,\epsilon_1)$, $A_Jx=D_J(\delta_2,\epsilon_2)$ and $A_{\bar{J}}x>D_{\bar{J}}(\delta_2,\epsilon_2)$.\\
For any $t\in]0,1[$, $z_t:=(1-t)x+ty$ and $(\delta_t,\epsilon_t):=(1-t)(\delta_1,\epsilon_1)+t(\delta_2,\epsilon_2)$ satisfy $A_{I\cap J}z_t=D_{I\cap J}(\delta_t,\epsilon_t)$, $A_{\overline{I\cap J}}z_t>D_{\overline{I\cap J}}(\delta_t,\epsilon_t)$. Hence, $\omega_{I\cap J}$ contains the strict convex hull of $(\delta_1,\epsilon_1)$ and $(\delta_2,\epsilon_2)$. In particular, $\omega_{I\cap J}$ is not empty.
\end{proof}

\begin{rem}
\begin{itemize}
If $I=\{i\}$ and $A_i\neq 0$, or more generally if $A_I$ is surjective, Lemma~\ref{lem:convopen} implies that $\omega_{I}$ is open (possibly empty). 

\noindent If $I=\{i\}$ and $A_i\neq 0$, for any $(\delta,\epsilon)\in\Omega_I\backslash\omega_I$, either the polytope $P^{\delta,\epsilon}$ can be defined without the line $i$, in other words
$$P^{\delta,\epsilon}=\{x\in\Qbb^n\,\mid\,A_{\overline{\{i\}}}x\geq D_{\overline{\{i\}}}(\delta,\epsilon)\},$$ or the polytope $P^{\delta,\epsilon}$ is not of dimension $n$. 
Indeed, if $(\delta,\epsilon)\in\Omega_{I}\backslash\omega_{I}$, and $P^{\delta,\epsilon}$ has maximal dimension, then $F^{\delta,\epsilon}_i$ is not a facet of $P^{\delta,\epsilon}$ or equals  another $F^{\delta,\epsilon}_j$ and in both cases the inequality $A_i x\geq D_i(\delta,\epsilon)$ is superfluous in the definition of $P^{\delta,\epsilon}$.

\noindent If $I$ is such that $A_I$ is invertible, then $F_I^{\delta,\epsilon}$ is either empty or a vertex of $P^{\delta,\epsilon}$. If $(\delta,\epsilon)\in\Omega_I$,
then $P^{\delta,\epsilon}$ is simple at the corresponding vertex if $(\delta,\epsilon)\in\omega_I$. But the converse is false: it can happen that $P^{\delta,\epsilon}$ is simple but $(\delta,\epsilon)\not\in\omega_I$ if an inequality $A_i x\geq D_i(\delta,\epsilon)$ with $i\not\in I$ is superfluous in the definition of $P^{\delta,\epsilon}$ and $F_I^{\delta,\epsilon}=F_{I\cup\{i\}}^{\delta,\epsilon}$.
\end{itemize}

\end{rem}

\subsection{Generality of the polarization}\label{ssec:gen}
In this section we prove some preparatory results.
Throughout the section by \textit{general} we mean \textit{in a Zariski open set}.
\begin{lem}\label{lem:generalB}
We can choose $B$ and $B'$ general such that for any $I\subseteq I_0$, we have the following.
\begin{itemize}
\item If the image of $A_I$ has codimension 1 in $\Qbb^I$, then $\omega_I$ is an open convex part of an affine line (possibly empty).
\item If the image of $A_I$ has codimension 2 in $\Qbb^I$, then $\omega_I$ is either empty or a point.
\item If the image of $A_I$ has codimension 3 in $\Qbb^I$, then $\omega_I$ is empty.
\end{itemize}
\end{lem}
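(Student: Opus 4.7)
The plan is to reduce the claim to a linear-algebra statement about the affine map
$$\bar D_I\colon \Qbb^2\longrightarrow \Qbb^{|I|}/\Im A_I,\qquad (\delta,\epsilon)\longmapsto \bar B_I+\delta\,\overline{B'_I-B_I}+\epsilon\,\bar C_I,$$
where overlines denote classes modulo $\Im A_I$. By Lemma~\ref{lem:convopen} we have $\omega_I\subseteq D_I^{-1}(\Im A_I)=\bar D_I^{-1}(0)$ and $\omega_I$ is open in this preimage, so it is enough to control $\dim \bar D_I^{-1}(0)$ according to $k := \codim_{\Qbb^{|I|}}\Im A_I$. Denote by $L_I(\delta,\epsilon):=\delta\,\overline{B'_I-B_I}+\epsilon\,\bar C_I$ the linear part of $\bar D_I$.

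I would then proceed case by case. If $k=1$, the quotient has dimension~$1$ and for generic $(B,B')$ the class $\overline{B'_I-B_I}$ is nonzero, so $L_I$ is surjective and $\bar D_I^{-1}(0)$ is an affine line, giving $\omega_I$ as an open convex part of a line (possibly empty). If $k=2$, the preimage $\bar D_I^{-1}(0)$ has positive dimension if and only if $L_I$ has rank at most one \emph{and} $\bar B_I\in\Im L_I$, equivalently if and only if the three classes $\bar B_I,\overline{B'_I-B_I},\bar C_I$ all lie in a common $1$-dimensional subspace of the $2$-dimensional quotient; this is a proper Zariski-closed condition, so generically $\bar D_I^{-1}(0)$ is either a single point or empty. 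If $k=3$, then $\dim\Im L_I\leq 2<k$, so the affine subspace $\bar B_I+\Im L_I$ generically misses the origin and $\bar D_I^{-1}(0)=\emptyset$.

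Finally, I would verify that each of the three conditions above is a nonempty Zariski-open subset of $(B,B')\in\Qbb^p\times\Qbb^p$: indeed, the assignment $(B,B')\mapsto(\bar B_I,\overline{B'_I-B_I})$ is a surjective linear map onto $(\Qbb^{|I|}/\Im A_I)^2$, so any proper Zariski-closed condition on the target pulls back to a proper Zariski-closed condition in the source, and nonempty Zariski-open $\Qbb$-subsets of affine space contain rational points. Intersecting the finitely many such conditions, one per $I\subseteq I_0$, still yields a nonempty Zariski-open set, any element of which satisfies all three properties simultaneously. The main delicate point is the $k=2$ case, where the disjunction ``$L_I$ invertible \emph{or} $\bar B_I\notin\Im L_I$'' must be packaged as a single Zariski-open condition; this is accomplished precisely by the collinearity criterion stated above.
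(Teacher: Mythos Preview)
Your argument is correct and follows essentially the same approach as the paper: both reduce the question to bounding $\dim D_I^{-1}(\Im A_I)$ via linear conditions on $B_I,B'_I$ modulo $\Im A_I$, then intersect finitely many nonempty Zariski-open conditions. Your packaging through the quotient map $\Qbb^{|I|}\to\Qbb^{|I|}/\Im A_I$ and the single collinearity criterion in the $k=2$ case is a slightly cleaner way to phrase what the paper does by listing explicit linear conditions case by case, but the substance is the same.
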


\begin{proof}

The set $\{D_I(\delta,\epsilon)\,\mid\,(\delta,\epsilon)\in\Qbb^2\}$ is the affine subspace passing through $B_I$ and directed by $B_I'-B_I$ and $C_I$. It is a plane for  $B$ and $B'$ general (that is, if $B'-B$ is not colinear to $C$). Now, to have the three conditions above, it is enough to choose $B$ and $B'$ such that:
\begin{itemize}
\item if the image of $A_I$  has codimension 1 in $\Qbb^I$, then $B_I$ and $B'_I$ are not in the image of $A_I$;
\item if the image of $A_I$  has codimension 2 in $\Qbb^I$, then $B'_I-B_I$ is not in the vector subspace $\Im(A_I)+\Qbb C_I$ (of codimension at least one in $\Qbb^I$), and if $C_I$ is in $\Im(A_I)$ then  $B_I$ is not in $\Im(A_I)$;
\item  if the image of $A_I$  has codimension 3 in $\Qbb^I$, $B_I'-B_I$ is not in $\Im(A_I)$ and $B_I$ is not in the vector subspace $\Im(A_I)+\Qbb C_I+\Qbb (B_I'-B_I)$ (of codimension at least one in $\Qbb^I$).
\end{itemize}
This, together with Lemma~\ref{lem:convopen}, means that it is enough to choose $B$ and $B'$ outside finitely many proper linear subspaces of $\Qbb^p$, thus $B$ and $B'$ in an open set of $\mathbb Q^p$.
\end{proof}

From now, we assume that $B$ and $B'$ general in the sense of Lemma \ref{lem:generalB}.

\begin{cor}\label{cor:generalB}
 There is a finite union  of convex parts of affine lines $$L=\partial\Omega_{\emptyset}\,\cup\bigcup_{I\subseteq I_0\,\dim \Omega_I\leq 1} \Omega_I\subseteq\Qbb^2$$ 
 such that if $(\delta,\epsilon)\not\in L$ then $P^{\delta,\epsilon}$ is an $n$-dimensional simple polytope or it is empty.
\end{cor}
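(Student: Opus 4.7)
The plan is to decompose the complement of $L$ into two cases and show that in each of them $P^{\delta,\epsilon}$ is either empty, or an $n$-dimensional simple polytope. The two main tools are Lemma~\ref{lem:generalB}, which constrains the dimension of each $\omega_I$ in terms of the codimension of the image of $A_I$ (thanks to the genericity of $B$ and $B'$), and Lemma~\ref{lem:closed}, which says $\Omega_I=\overline{\omega_I}$ is polyhedral whenever it is non-empty.

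First I would verify that $L$ is indeed a finite union of convex parts of affine lines. The second piece $\bigcup_{\dim\Omega_I\leq 1}\Omega_I$ is indexed by subsets of the finite set $I_0$, and by Lemma~\ref{lem:convopen} together with Lemma~\ref{lem:closed} each term is a point, a segment, a half-line or a line. For the first piece, if $\dim\Omega_\emptyset\leq 1$ then $\Omega_\emptyset$ already appears in the second union so $\partial\Omega_\emptyset$ brings nothing new; otherwise Lemma~\ref{lem:closed} gives that $\Omega_\emptyset$ is a $2$-dimensional polyhedron, and its boundary is a finite union of segments and half-lines.

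Next I would show that if $(\delta,\epsilon)\notin L$ and $P^{\delta,\epsilon}\neq\emptyset$, then $P^{\delta,\epsilon}$ is of dimension $n$. Since $P^{\delta,\epsilon}\neq\emptyset$ means $(\delta,\epsilon)\in\Omega_\emptyset$, and since $(\delta,\epsilon)\notin L$ rules out both $\partial\Omega_\emptyset$ and the possibility $\dim\Omega_\emptyset\leq 1$, we get that $(\delta,\epsilon)$ lies in the interior of the $2$-dimensional polyhedron $\Omega_\emptyset=\overline{\omega_\emptyset}$, hence in $\omega_\emptyset$; equivalently, $P^{\delta,\epsilon}$ has non-empty interior in $\Qbb^n$.

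The main step, which I expect to be the only delicate point, is simplicity. Let $v$ be a vertex of $P^{\delta,\epsilon}$ and let $I\subseteq I_0$ be the maximal subset with $F_I^{\delta,\epsilon}=\{v\}$; by maximality $(\delta,\epsilon)\in\omega_I$. Since $\{v\}$ is $0$-dimensional, $A_I$ has rank $n$. If $v$ were not simple, we would have $|I|>n$, hence $A_I$ would have non-trivial kernel, and its image would have codimension at least $1$ in $\Qbb^{|I|}$. Lemma~\ref{lem:generalB} then forces $\dim\omega_I\leq 1$ and hence $\dim\Omega_I\leq 1$ by Lemma~\ref{lem:closed}, so $(\delta,\epsilon)\in\Omega_I\subseteq L$, a contradiction. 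The only subtlety here is identifying precisely the codimension hypothesis of Lemma~\ref{lem:generalB} with the failure of simplicity at $v$; once one observes that rank $n$ is automatic at a vertex and that $|I|>n$ is exactly the non-simple case, the argument closes.
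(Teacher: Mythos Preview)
Your proposal is correct and follows essentially the same route as the paper: handle the low-dimensional case via $\partial\Omega_\emptyset$, and at a non-simple vertex take the maximal $I$ with $F_I^{\delta,\epsilon}=\{v\}$, observe $|I|>n$, and invoke Lemma~\ref{lem:generalB} to force $\dim\omega_I\leq 1$. One small slip: after noting that $A_I$ has rank $n$, you write that $|I|>n$ gives $A_I$ a ``non-trivial kernel''; this should read non-trivial \emph{cokernel} (equivalently, $A_I$ is not surjective), which is precisely the hypothesis you then feed into Lemma~\ref{lem:generalB}.
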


\begin{proof}
Assume that $P^{\delta,\epsilon}$ is not empty.
If $P^{\delta,\epsilon}$ has  of dimension~$n$ and is not simple, then there exists a vertex $v$ of $P^{\delta,\epsilon}$ 
that is contained in at least $n+1$ facets. Then there is $I\subseteq I_0$ of cardinality $p\geq n+1$ such that the vertex is $F_I^{\delta,\epsilon}=\{v\}$. 
In particular, if we choose $I$ maximal among the subsets $I$ of $I_0$ such that $F_I^{\delta,\epsilon}=\{v\}$, 
we have $(\delta,\epsilon)\in\omega_I$. But $A_I$ is a $p\times n$ matrix with $p\geq n+1$ so it cannot be surjective. By Lemma~\ref{lem:generalB}, $\omega_I$ is either a segment or a point. 

If $P^{\delta,\epsilon}$ is of dimension at most $n$, then $(\delta,\epsilon)\not\in\omega_{\emptyset}$, which is open and non-empty by hypothesis. 
Then $(\delta,\epsilon)$ is in the boundary of $\Omega_{\emptyset}$, which is a finite union of convex parts of affine lines.

We set then $$L=\partial\Omega_{\emptyset}\,\cup\bigcup_{I\subseteq I_0\,\dim \Omega_I\leq 1} \Omega_I.$$
\end{proof}

\begin{rem} 
Notice that the condition we impose on $\Omega_{\emptyset}\setminus L$ is stronger than the polytope $P^{\delta,\epsilon}$ being simple.
Indeed, the vertices of $P^{\delta,\epsilon}$ for $(\delta,\epsilon)\in\Omega_{\emptyset}\setminus L $
are contained in exactly $n$ affine hyperplanes $\mathcal{H}_i:=\{x\in\Qbb^n\,\mid\,A_ix=D_i(\delta,\epsilon)\}$,
while a vertex of an affine polytope is contained in exactly $n$ facets.
\end{rem}

Now we prove that if $B$ and $B'$ are general, two sets $\omega_I$ and $\omega_J$ 
of dimension 0 or 1 intersect or are aligned only in specific cases.

\begin{notc}\label{def:alpha}
Let $I\subseteq I_0$ be such that  
 the image of $A_I$ is contained in a hyperplane, and such that for any $i\in I$, $A_{I\backslash\{i\}}$ is surjective. 
Then there are rational numbers $\lambda_i^I$ with $i\in I$ such that $\sum_{i\in I}\lambda_i^I A_i=0$. 
We notice that  $\lambda_i^I\neq 0$ for every $i\in I$. Indeed, if there is $j$ such that $\lambda_j^I= 0$, then $\sum_{i\in I\setminus\{j\}}\lambda_i^I X_i$ is a nontrivial equation for the lines of $A_{I\backslash\{j\}}$.
We can assume that $\sum_{i\in I}\lambda_i^IC_i$ is zero or one. We fix such numbers.

\smallskip

Let $I\subseteq I_0$ such that $\omega_I$ is not empty, the image of $A_I$ has codimension at least two and such that for any $i\in I$, 
the image of $A_{I\backslash\{i\}}$ has codimension one. In particular, the image of $A_I$ has codimension exactly two.
Then there are rational numbers $\lambda_i^I$ and $\lambda_i^{\prime I}$ with $i\in I$ such that $\sum_{i\in I}\lambda_i^I A_i=0$ and $\sum_{i\in I}\lambda_i^{\prime I} A_i=0$ are two independent relations. 
For $B$ and $B'$ general  as in Lemma~\ref{lem:generalB}, the point in $\omega_{I}$ is the only solution of the linear system 
$$
\left\lbrace
\begin{array}{ccc}
(\sum_{i\in I}\lambda_i^I C_i) \epsilon+(\sum_{i\in I}\lambda_i^I(B'_i-B_i))\delta+\sum_{i\in I}\lambda_i^IB_i&=&0\\
(\sum_{i\in I}\lambda_i^{\prime I} C_i) \epsilon+(\sum_{i\in I}\lambda_i^{\prime I}(B'_i-B_i))\delta+\sum_{i\in I}\lambda_i^{\prime I}B_i&=&0
\end{array}
\right.
$$
therefore $\sum_{i\in I}\lambda_i^IC_i$ and $\sum_{i\in I}\lambda_i^{\prime I}C_i$ cannot be simultaneously zero. After perhaps replacing $\lambda_i^{\prime I}$ with $\lambda_i^{\prime I}-\lambda_i^{I}$, we can assume that $\sum_{i\in I}\lambda_i^IC_i=1$ and $\sum_{i\in I}\lambda_i^{\prime I}C_i=0$.

Moreover, for any $i\in I$, either $\lambda_i^I$ or $\lambda_i^{\prime I}$ is not zero. Indeed, if there is $j$ such that $\lambda_j^I= 0$ and $\lambda_j^{\prime I}=0$, 
then $\sum_{i\in I\setminus\{j\}}\lambda_i^I X_i$ and $\sum_{i\in I\setminus\{j\}}\lambda_i^{\prime I} X_i$ are nontrivial linearly independent equations for the lines of $A_{I\backslash\{j\}}$.

We fix such numbers.
\end{notc}

\begin{rem}\label{rem:surjminusj} 
Let $I\subseteq I_1$ be such that
 $\Im A_I$ and $\Im A_{I_1}$ have codimension 1 and are defined by the same equation $\sum_{i\in I} \lambda_i X_i=0$. Assume moreover that $I=\{i\vert\;\lambda_i\neq 0\}$.
Then for every $j\in I$ the morphism induced by $A_{I_1\setminus\{j\}}$ is surjective.
Indeed, consider the projection $\pi\colon \mathbb Q^{I_1}\to\mathbb Q^{I_1\setminus\{j\}}$. The image of $A_{I_1\setminus\{j\}}$ is the projection of the hyperplane defined by $\sum_{i\in I} \lambda_i X_i=0$.
 Therefore $A_{I_1\setminus\{j\}}$ is surjective.
\end{rem}

\begin{rem}\label{rem:neq}
 Let $I\neq J$ be such that the images of $A_I$ and $A_J$ have codimension $d$ and  for any $i\in I$ and $j\in J$, the images of $A_{I\backslash\{i\}}$ and $A_{J\backslash\{j\}}$
 have codimension $d-1$. Then $I\not\subseteq J$ and $J\not\subseteq I$.
 Indeed if we had $I\subsetneq J$, there would be $j\in J\setminus I$ and $\Im (A_I)$ would have codimension at most the codimension of  $\Im (A_{J\backslash\{j\}})$.
\end{rem}

\begin{lem}\label{lem:segmentsgeneraux}
Let $I\neq J$ be such that $\omega_I$ and $\omega_J$ are  non-empty open convex parts of the same affine line, 
and such that for any $i\in I$ and $j\in J$, $A_{I\backslash\{i\}}$ and $A_{J\backslash\{j\}}$ are surjective. 
Then $B$ and $B'$ satisfy a quadratic or linear condition.
\end{lem}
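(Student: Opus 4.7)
The strategy is to translate the geometric condition that $\omega_I$ and $\omega_J$ lie on the same affine line into a system of polynomial equations in the coordinates of $B$ and $B'$, and to show that at least one of these equations is non-trivial.

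I would first extract equations for the affine lines carrying $\omega_I$ and $\omega_J$. Since $\omega_I$ is a non-empty one-dimensional convex subset of $\Qbb^2$, Lemmas~\ref{lem:convopen} and~\ref{lem:generalB} force the image of $A_I$ to have codimension exactly one in $\Qbb^I$. Combined with the surjectivity of $A_{I\backslash\{i\}}$ for every $i\in I$, the hypotheses of Notation/Construction~\ref{def:alpha} are satisfied, producing rational numbers $\lambda_i^I\neq 0$ for $i\in I$ with $\sum_{i\in I}\lambda_i^I A_i = 0$ and $c_I:=\sum_{i\in I}\lambda_i^I C_i\in\{0,1\}$. Setting $b_I:=\sum_{i\in I}\lambda_i^I B_i$ and $b'_I:=\sum_{i\in I}\lambda_i^I B'_i$, the condition $D_I(\delta,\epsilon)\in \Im A_I$ becomes the single affine equation
$$c_I\epsilon+(b'_I-b_I)\delta+b_I=0,$$
so the affine line $L_I$ carrying $\omega_I$ is cut out by this equation. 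Define $c_J$, $b_J$, $b'_J$ and $L_J$ analogously.

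I would then translate $L_I=L_J$ into a matrix rank condition: the two lines agree if and only if
$$\begin{pmatrix} c_I & b'_I-b_I & b_I \\ c_J & b'_J-b_J & b_J \end{pmatrix}$$
has rank at most one, i.e.\ all three $2\times 2$ minors vanish. A short case analysis on $(c_I,c_J)\in\{0,1\}^2$ shows that either the lines cannot coincide (when $c_I\neq c_J$, since one is vertical in the $(\delta,\epsilon)$-plane while the other is not; the degenerate sub-case $b_J=b'_J=0$ is ruled out because it would force $\omega_J$ to be two-dimensional or empty), or the rank condition is equivalent to the two linear equations $b_I=b_J$ and $b'_I-b_I=b'_J-b_J$ (when $c_I=c_J=1$), or to the single quadratic equation $b'_I b_J = b_I b'_J$ (when $c_I=c_J=0$). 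Finally I would verify non-triviality using $I\neq J$: picking $i_0$ in the symmetric difference, say $i_0\in I\backslash J$, the coefficient of $B_{i_0}$ in $b_I-b_J$ is $\lambda_{i_0}^I\neq 0$, which makes the linear condition non-trivial. For the quadratic condition, writing
$$b'_I b_J-b_I b'_J=\sum_{i,j}(\lambda_i^I\lambda_j^J-\lambda_i^J\lambda_j^I)B'_iB_j,$$
its vanishing as a polynomial would force the vectors $(\lambda_i^I)_i$ and $(\lambda_i^J)_i$ to be proportional, which is excluded since $\lambda_{i_0}^I\neq 0=\lambda_{i_0}^J$ while $J\neq\emptyset$.

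The main obstacle I anticipate is the non-triviality step in the quadratic case: the only surviving constraint is the single bilinear equation $b'_I b_J - b_I b'_J = 0$, and one must argue carefully from $I\neq J$ that the antisymmetric matrix $(\lambda_i^I\lambda_j^J-\lambda_j^I\lambda_i^J)_{i,j}$ does not vanish identically, so that this equation is a genuine constraint on $(B,B')$ rather than a tautology.
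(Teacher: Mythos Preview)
Your proposal is correct and follows essentially the same approach as the paper: extract the two affine equations for the lines from the unique (up to scale) linear relation among the rows of $A_I$ and $A_J$, normalize the $\epsilon$-coefficients to lie in $\{0,1\}$, and then read off a linear condition (when both are $1$) or a quadratic condition (when both are $0$) from the proportionality of the two equations. The paper uses the variable $B''=B'-B$ and proves non-triviality of the quadratic by exhibiting a single nonzero monomial coefficient $\lambda_i^I\lambda_j^J$ with $i\in I\setminus J$ and $j\in J\setminus I$ (using Remark~\ref{rem:neq}), whereas you argue via proportionality of the coefficient vectors; both are fine. One small remark: your invocation of Lemma~\ref{lem:generalB} to get $\mathrm{codim}\,\Im A_I=1$ is unnecessary and slightly circular, since that lemma already assumes $B,B'$ general; the surjectivity of each $A_{I\setminus\{i\}}$ (which you do use) already forces $\mathrm{codim}\,\Im A_I\le 1$, and $\omega_I$ being one-dimensional gives $\ge 1$.
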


\begin{proof}
Set $B'':=B'-B$. By remark \ref{rem:neq} we have $I\not\subseteq J$ and $J\not\subseteq I$.
Since $\omega_I$ and $\omega_J$ are non-empty open convex parts of affine lines, the images of $A_I$ and $A_J$ have codimension at least one.
Since $A_{I\backslash\{i\}}$ and $A_{J\backslash\{j\}}$ are surjective, they have codimension exactly one.

The affine line containing $\omega_{I}$ (resp. $\omega_{J}$) has as equation $\sum_{i\in I}\lambda_i^ID_i(\delta,\epsilon)=0$ (resp. $\sum_{i\in I}\lambda_j^J D_j(\delta,\epsilon)=0$), \textit{i.e.} 
$$(\sum_{i\in I}\lambda^I_iC_i)\epsilon+(\sum_{i\in I}\lambda^I_iB''_i)\delta+\sum_{i\in I}\lambda^I_iB_i=0,\;\text{resp.}\;(\sum_{j\in J}\lambda^J_jC_j)\epsilon+(\sum_{j\in J}\lambda^J_jB''_j)\delta+\sum_{j\in J}\lambda^J_jB_j=0.$$ 
Those two equations by hypothesis define the same line.
It follows from Construction \ref{def:alpha} that the coefficients of $\epsilon$ are either both zero or both one.

\medskip

 If $\sum_{i\in I}\lambda^I_iC_i=\sum_{j\in J}\lambda^J_jC_j=0$, since the two equations define the same line, 
 we have $$(\sum_{i\in I}\lambda^I_iB''_i)(\sum_{j\in J}\lambda^J_jB_j)=(\sum_{j\in J}\lambda^J_jB''_j)(\sum_{i\in I}\lambda^I_iB_i).$$ 
 This condition is nontrivial: let $i\in I$ and $j\in J$ such that $i\not\in J$ and $j\not\in I$, then the coefficient in the quadratic condition in $B_i''B_j$ is $\lambda_i^I\lambda_j^J$ and this is non zero by Construction \ref{def:alpha}.

\medskip

If $\sum_{i\in I}\lambda^I_iC_i=\sum_{j\in J}\lambda^J_jC_j=1$, then $\sum_{i\in I}\lambda^I_iB''_i=\sum_{j\in J}\lambda^J_jB''_j$ and $\sum_{i\in I}\lambda^I_iB_i=\sum_{j\in J}\lambda^J_jB_j$.
\end{proof}

\begin{cor}\label{cor:segmentsgeneraux}
Assume that $B$ and $B'$ are general. Then for any $I$ and $J$ subsets of $I_0$ such that $\omega_I$ and $\omega_J$ are nonempty open convex parts of affine lines,
$\omega_I$ and $\omega_J$ are contained in the same  
affine line  if and only if $\omega_{I\cap J}$ is a non-empty open convex part of an affine line.
Moreover in this case, we have $\omega_{I\cap J}\supseteq \omega_I\cup\omega_J$.
\end{cor}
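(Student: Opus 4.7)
The plan is to prove the two implications separately; the backward direction is direct, while the forward direction relies on the reduction to minimal subsets provided by Lemma \ref{lem:inclusions}(3), combined with the quadratic-constraint conclusion of Lemma \ref{lem:segmentsgeneraux}.

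For the backward implication, assume that $\omega_{I\cap J}$ is a nonempty open convex part of an affine line. Since $I\cap J\subseteq I$ and both $\omega_I$ and $\omega_{I\cap J}$ are nonempty and $1$-dimensional, Lemma \ref{lem:inclusions}(1) gives $\omega_I\subseteq \omega_{I\cap J}$; symmetrically $\omega_J\subseteq \omega_{I\cap J}$. Both $\omega_I$ and $\omega_J$ therefore lie in the unique affine line supporting $\omega_{I\cap J}$.

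For the forward implication, the main step is to pass to minimal subsets. Applying Lemma \ref{lem:inclusions}(3) with codimension $k=1$, I produce $I'\subseteq I$ and $J'\subseteq J$ with $\omega_I\subseteq \omega_{I'}$, $\omega_J\subseteq \omega_{J'}$, with $\Im A_{I'}$ and $\Im A_{J'}$ of codimension $1$, and with $A_{I'\setminus\{i\}}$, $A_{J'\setminus\{j\}}$ surjective for every $i\in I'$, $j\in J'$. Since $\omega_{I'}$ and $\omega_{J'}$ each contain a $1$-dimensional set and are themselves $1$-dimensional, they must lie in the common affine line of $\omega_I$ and $\omega_J$. By Lemma \ref{lem:segmentsgeneraux}, two distinct such minimal subsets giving open convex parts of the same line impose a nontrivial quadratic or linear condition on $B,B'$. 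As there are only finitely many pairs of subsets of $I_0$, genericity of $B,B'$ allows us to avoid all such conditions simultaneously, forcing $I'=J'$, and hence $I'\subseteq I\cap J$.

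It remains to verify that $\omega_{I\cap J}$ is itself a nonempty open convex part of an affine line. Since the projection $\Qbb^{I\cap J}\to \Qbb^{I'}$ sends $\Im A_{I\cap J}$ onto $\Im A_{I'}$, which has codimension $1$, we obtain $\codim \Im A_{I\cap J}\geq 1$, so by Lemma \ref{lem:generalB}, $\omega_{I\cap J}$ cannot be $2$-dimensional. On the other hand, applying Lemma \ref{lem:intersectionIJ} to distinct points $(\delta_1,\epsilon_1)\in \omega_I$ and $(\delta_2,\epsilon_2)\in \omega_J$ (which exist because $\omega_I$ and $\omega_J$ are $1$-dimensional hence infinite) places an open segment inside $\omega_{I\cap J}$, so $\omega_{I\cap J}$ is not a single point. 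Hence $\omega_{I\cap J}$ is $1$-dimensional, and the final inclusion $\omega_{I\cap J}\supseteq \omega_I\cup \omega_J$ follows from a further application of Lemma \ref{lem:inclusions}(1). The only delicate point in this plan is the uniformity of the genericity assumption over all pairs of subsets, which is handled by the finiteness of $I_0$.
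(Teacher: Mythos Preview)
Your proof is correct and follows essentially the same approach as the paper: reduce to minimal subsets $I',J'$ via Lemma~\ref{lem:inclusions}(3), apply Lemma~\ref{lem:segmentsgeneraux} together with genericity to force $I'=J'\subseteq I\cap J$, and deduce the backward implication from Lemma~\ref{lem:inclusions}(1). The only cosmetic difference is that the paper bounds $\omega_{I\cap J}$ by sandwiching it as $\omega_I\subseteq\omega_{I\cap J}\subseteq\omega_{I'}$ (using Lemma~\ref{lem:inclusions}(1) and (2)), whereas you invoke Lemma~\ref{lem:intersectionIJ} for nonemptiness and dimension~$\geq 1$; both routes are valid and equally short.
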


\begin{proof}
Let $I$ and $J$ be subsets of $I_0$ such that $\omega_I$ and $\omega_J$ are non-empty open convex parts of the same affine line $L$. 
By Lemma~\ref{lem:inclusions}, there exist $I'\subseteq I$ and $J'\subseteq J$ such that $\omega_{I'}$ and $\omega_{J'}$ are non-empty open segments contained in $L$, 
and such that for every $i\in I'$ and $j\in J'$, $A_{I'\backslash\{i\}}$ and $A_{J'\backslash\{j\}}$ are surjective. 
By Lemma~\ref{lem:segmentsgeneraux}, for $B$ and $B'$ general, we must have $I'=J'$. 
In particular $I\cap J\supset I'$ and $\omega_I\subset\omega_{I\cap J}\subseteq \omega_{I'}$ so that $\omega_{I\cap J}$ is a non-empty open convex part of an affine line.

Conversely, if $\omega_I$, $\omega_J$ and $\omega_{I\cap J}$ are non-empty open convex part of an affine line,
then they are contained in the same affine line because $\omega_I\subset\omega_{I\cap J}\supset\omega_J$.
\end{proof}

\begin{lem}\label{lem:pointsgeneraux}
Let $I\neq J$ be such that $\omega_I=\omega_J$ is reduced to a point, and such that for any $i\in I$ and $j\in J$, 
the images of $A_{I\backslash\{i\}}$ and $A_{J\backslash\{j\}}$ have codimension one. 
Then $B$ and $B'$ satisfy  quadradic conditions.

\end{lem}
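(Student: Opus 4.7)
The strategy mirrors that of Lemma~\ref{lem:segmentsgeneraux}: we express the unique point $(\delta_0,\epsilon_0)\in\omega_I=\omega_J$ in terms of the entries of $B$ and $B'$ in two different ways, and then the equality of these expressions produces a non-trivial polynomial relation of degree at most two.

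First, set $B'':=B'-B$. By hypothesis, the image of $A_I$ (respectively $A_J$) has codimension $2$ and every $A_{I\setminus\{i\}}$ (respectively $A_{J\setminus\{j\}}$) has codimension $1$, so Construction~\ref{def:alpha} provides rational numbers $\lambda_i^I,\lambda_i^{\prime I}$ for $i\in I$ and $\lambda_j^J,\lambda_j^{\prime J}$ for $j\in J$ giving two independent relations among the lines of $A_I$ (respectively $A_J$), normalised so that
\[
\sum_{i\in I}\lambda_i^I C_i=1,\quad \sum_{i\in I}\lambda_i^{\prime I}C_i=0,\quad \sum_{j\in J}\lambda_j^J C_j=1,\quad \sum_{j\in J}\lambda_j^{\prime J}C_j=0.
\]
The unique point $(\delta_0,\epsilon_0)$ of $\omega_I$ is the solution of
\[
\epsilon_0+\Bigl(\sum_{i\in I}\lambda_i^I B''_i\Bigr)\delta_0+\sum_{i\in I}\lambda_i^I B_i=0,\qquad \Bigl(\sum_{i\in I}\lambda_i^{\prime I}B''_i\Bigr)\delta_0+\sum_{i\in I}\lambda_i^{\prime I}B_i=0,
\]
and analogously for $\omega_J$ with the primed coefficients of $J$.

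Second, I would eliminate $\epsilon_0$ and $\delta_0$. The second equation in each system determines $\delta_0$ as a ratio of linear forms in $(B,B')$; equating the two expressions (and doing the same for $\epsilon_0$) gives the two identities
\[
\Bigl(\sum_{i\in I}\lambda_i^{\prime I}B_i\Bigr)\Bigl(\sum_{j\in J}\lambda_j^{\prime J}B''_j\Bigr)=\Bigl(\sum_{j\in J}\lambda_j^{\prime J}B_j\Bigr)\Bigl(\sum_{i\in I}\lambda_i^{\prime I}B''_i\Bigr),
\]
together with a second analogous relation coming from the first equation of each system. These are polynomial conditions of degree at most $2$ in the entries of $(B,B')$; if some denominator vanishes, one obtains a linear condition instead, which is even more restrictive and is handled separately as in Lemma~\ref{lem:segmentsgeneraux}.

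Finally, I would verify that these relations are genuinely non-trivial. Since $I\neq J$, Remark~\ref{rem:neq} (applied with codimension $d=2$) gives $I\not\subseteq J$ and $J\not\subseteq I$, so we can pick $i_0\in I\setminus J$ and $j_0\in J\setminus I$. In the first displayed quadratic relation, the coefficient of the monomial $B_{i_0}B''_{j_0}$ is exactly $\lambda_{i_0}^{\prime I}\lambda_{j_0}^{\prime J}$, and by Construction~\ref{def:alpha} at least one of $\lambda_{i_0}^I,\lambda_{i_0}^{\prime I}$ is non-zero (similarly for $j_0$); arguing symmetrically with the second relation, at least one of the two quadratic identities has a non-zero coefficient on a pure cross-term $B_{i_0}B''_{j_0}$ (or $B''_{i_0}B_{j_0}$), so it cuts out a proper Zariski-closed subset of the parameter space of $(B,B')$. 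The main delicate point is precisely this non-triviality: one has to organise the case analysis on which of the $\lambda^I,\lambda^{\prime I},\lambda^J,\lambda^{\prime J}$ vanish so that some cross-term monomial survives in at least one of the two quadratic identities produced above.
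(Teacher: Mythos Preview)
Your approach is essentially the same as the paper's: set up the two linear systems $(\mathcal S_I)$ and $(\mathcal S_J)$ using Construction~\ref{def:alpha}, extract two quadratic conditions $(\mathcal C_1)$ and $(\mathcal C_2)$ in $(B,B'')$, and show they cannot both be trivial. Your condition on $\delta_0$ is exactly the paper's $(\mathcal C_1)$, and your ``second analogous relation coming from the first equation of each system'' is the paper's $(\mathcal C_2)$, obtained by subtracting the two first equations and comparing with the second equation of $(\mathcal S_I)$.

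The one place where your plan needs adjustment is the non-triviality argument, which you rightly flag as delicate. Fixing a single pair $i_0\in I\setminus J$, $j_0\in J\setminus I$ and looking only at the cross-terms $B_{i_0}B''_{j_0}$ (or $B''_{i_0}B_{j_0}$) does not suffice: if $\lambda_{i_0}^{\prime I}=0$ and $\lambda_{j_0}^{\prime J}=0$ simultaneously, then these monomials have zero coefficient in $(\mathcal C_1)$, in $(\mathcal C_2)$, and in its $J$-variant. The paper avoids this by \emph{not} restricting to $i\in I\setminus J$. Its argument is: if $(\mathcal C_1)$ is trivial as a polynomial, then for every $j\in J\setminus I$ and every $i\in I$ one has $\lambda_j^{\prime J}\lambda_i^{\prime I}=0$; since $(\lambda_i^{\prime I})_{i\in I}$ is not identically zero this forces $\lambda_j^{\prime J}=0$, hence $\lambda_j^{J}\neq 0$. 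Now in $(\mathcal C_2)$ the coefficient of $B_jB''_i$ is $\lambda_j^{J}\lambda_i^{\prime I}$ for any $i\in I$ (not only $i\in I\setminus J$), and choosing $i$ with $\lambda_i^{\prime I}\neq 0$ makes this nonzero. So the fix is simply to let $i$ range over all of $I$ when hunting for a surviving monomial in $(\mathcal C_2)$.
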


\begin{proof}
Set $B'':=B'-B$. By remark \ref{rem:neq} we have $I\not\subseteq J$ and $J\not\subseteq I$.
Since $\omega_I$ and $\omega_J$ are points, the images of $A_I$ and $A_J$ have codimension at least two.
Since $A_{I\backslash\{i\}}$ and $A_{J\backslash\{j\}}$ have codimension exactly one, the images of $A_I$ and $A_J$ have codimension exactly two.

Let $(\delta_0,\epsilon_0)$ be such that $\omega_I=\omega_J=\{(\delta_0,\epsilon_0)\}$. Then $(\delta_0,\epsilon_0)$ is the unique solution of the two following systems:

$$(\mathcal{S}_I)\quad\left\lbrace\begin{array}{ccc}
\sum_{i\in I}\lambda_i^ID_i(\delta,\epsilon)&=&0\\
\sum_{i\in I}\lambda_i^{\prime I}D_i(\delta,\epsilon)&=&0
\end{array}\right.\quad\mbox{and}\quad (\mathcal{S}_J)\quad\left\lbrace\begin{array}{ccc}
\sum_{j\in J}\lambda_j^JD_j(\delta,\epsilon)&=&0\\
\sum_{j\in J}\lambda_j^{\prime J}D_j(\delta,\epsilon)&=&0
\end{array}\right..$$

By Construction~\ref{def:alpha}, we can write these two systems as follows:
$$\left\lbrace\begin{array}{ccc}
\epsilon+(\sum_{i\in I}\lambda_i^IB''_i)\delta+\sum_{i\in I}\lambda_i^IB_i&=&0\\
(\sum_{i\in I}\lambda_i^{\prime I}B''_i)\delta+\sum_{i\in I}\lambda_i^{\prime I}B_i&=&0
\end{array}\right.\quad\mbox{and}\quad \left\lbrace\begin{array}{ccc}
\epsilon+(\sum_{j\in J}\lambda_j^JB''_j)\delta+\sum_{j\in J}\lambda_j^JB_j&=&0\\
(\sum_{j\in J}\lambda_j^{\prime J}B''_j)\delta+\sum_{j\in J}\lambda_j^{\prime J}B_j&=&0
\end{array}\right..$$

The equations in the second line are multiple one of the other, thus $$(\mathcal{C}_1):\,(\sum_{i\in I}\lambda^{'I}_iB''_i)(\sum_{j\in J}\lambda^{'J}_jB_j)=(\sum_{j\in J}\lambda^{'J}_jB''_j)(\sum_{i\in I}\lambda^{'I}_iB_i).$$

The equations in the first line imply $\left((\sum_{i\in I}\lambda_i^IB''_i)-(\sum_{j\in J}\lambda_j^JB''_j)\right)\delta+\sum_{i\in I}\lambda_i^IB_i-\sum_{j\in J}\lambda_j^JB_j=0$.
This equation is a multiple of $(\sum_{i\in I}\lambda_i^{\prime I}B''_i)\delta+\sum_{i\in I}\lambda_i^{\prime I}B_i=0$, therefore

$$(\mathcal{C}_2):\,\left((\sum_{i\in I}\lambda_i^IB''_i)-(\sum_{j\in J}\lambda_j^JB''_j)\right)\sum_{i\in I}\lambda_i^{\prime I}B_i-\left(\sum_{i\in I}\lambda_i^IB_i-\sum_{j\in J}\lambda_j^JB_j\right)\sum_{i\in I}\lambda_i^{\prime I}B''_i=0.$$

To conclude, it is enough to prove that the conditions $(\mathcal{C}_1)$ and $(\mathcal{C}_2)$ cannot be both trivial. 
Suppose that $(\mathcal{C}_1)$ is trivial. For any $j\in J$ such that $j\not\in I$ and for any $i\in I$, $\lambda_j^{\prime J}\lambda_i^{\prime I}=0$, 
but $(\lambda_i^{\prime I})_{i\in I}$ is not zero, so that $\lambda_j^{\prime J}=0$. 
 The condition $\lambda_j^{\prime J}=0$ implies that $\lambda_j^J\neq 0$, 
and then for any $i\in I$ the coefficient of $B_jB_i''$ in $(\mathcal{C}_2)$ is $\lambda_j^J\lambda_i^{\prime I}$, but $(\lambda_i^{\prime I})_{i\in I}$ is not zero, 
so that there exists $i\in I$ such that $\lambda_j^J\lambda_i^{\prime I}\neq 0$, and $(\mathcal{C}_2)$ is not trivial.
\end{proof}

\begin{cor}\label{cor:pointsgeneraux}
For $B$ and $B'$ general we have the following: for any $I,J\subseteq I_0$ such that $\omega_I$ and $\omega_J$ are singletons, we have
$\omega_I=\omega_J$ if and only if $\omega_{I\cap J}$ is a singleton, equal to $\omega_I=\omega_J$.
\end{cor}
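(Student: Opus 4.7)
The plan is to mimic the proof of Corollary~\ref{cor:segmentsgeneraux}, substituting one-dimensional $\omega$'s with zero-dimensional ones and invoking Lemma~\ref{lem:pointsgeneraux} in place of Lemma~\ref{lem:segmentsgeneraux}. First I would take $B$ and $B'$ general both in the sense of Lemma~\ref{lem:generalB} and outside the finitely many quadratic or linear conditions produced by Lemma~\ref{lem:pointsgeneraux}, as $(I',J')$ ranges over the (finitely many) pairs of subsets of $I_0$ satisfying that lemma's hypotheses; the intersection of these Zariski open conditions remains Zariski open.

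The easy direction is immediate: if $\omega_{I\cap J}$ is a singleton, then since $I\cap J\subseteq I$ and both $\omega_I$ and $\omega_{I\cap J}$ are zero-dimensional and non-empty, Lemma~\ref{lem:inclusions}(1) yields $\omega_I\subseteq \omega_{I\cap J}$, forcing equality; the same holds for $J$.

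For the converse, suppose $\omega_I=\omega_J=\{(\delta_0,\epsilon_0)\}$. By Lemma~\ref{lem:generalB} both $\Im A_I$ and $\Im A_J$ have codimension $2$. I would apply Lemma~\ref{lem:inclusions}(3) to extract $I'\subseteq I$ and $J'\subseteq J$, minimal in the sense that $\codim \Im A_{I'}=\codim \Im A_{J'}=2$ while removing any single row drops the codimension to $1$. Combining Lemma~\ref{lem:inclusions}(1) with Lemma~\ref{lem:generalB}, one obtains $\omega_{I'}=\omega_{J'}=\{(\delta_0,\epsilon_0)\}$; Lemma~\ref{lem:pointsgeneraux} then applies, and the generality of $B,B'$ rules out its quadratic conditions, forcing $I'=J'$. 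In particular $I'=J'\subseteq I\cap J$.

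The last step is to show $\omega_{I\cap J}=\{(\delta_0,\epsilon_0)\}$, and it rests on a short codimension bookkeeping: from $\operatorname{rank}A_{I'}=|I'|-2$ and $\operatorname{rank}A_I=|I|-2$ one sees that the rows of $A$ indexed by $I\setminus I'$ are linearly independent modulo the row space of $A_{I'}$, so every intermediate $K$ with $I'\subseteq K\subseteq I$ satisfies $\codim \Im A_K=2$. Applied to $K=I\cap J$, together with Lemma~\ref{lem:inclusions}(2) (invoked with $I\cap J\subseteq I$ and $\omega_I\neq\emptyset$), this gives $\omega_{I\cap J}\neq\emptyset$; Lemma~\ref{lem:generalB} then forces it to be a singleton, and Lemma~\ref{lem:inclusions}(1) identifies it with $\omega_I$. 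I expect this final codimension argument to be the main subtlety, since the conclusion of Lemma~\ref{lem:inclusions}(2) is needed not for the minimal $I'$ (where it is tautological) but for the a priori larger set $I\cap J$.
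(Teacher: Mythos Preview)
Your argument is correct. The only substantive difference from the paper's proof is in the final step of the forward direction, where you establish that $\omega_{I\cap J}$ is a non-empty singleton. You do this by a rank count showing $\codim\Im A_{I\cap J}=2$, then invoke Lemma~\ref{lem:inclusions}(2) for non-emptiness and Lemma~\ref{lem:generalB} to force it to be a point. The paper instead appeals directly to Lemma~\ref{lem:intersectionIJ}, which gives $\omega_{I\cap J}\neq\emptyset$ immediately from $\omega_I,\omega_J\neq\emptyset$; combined with the inclusion $\omega_{I\cap J}\subseteq\Omega_{I\cap J}\subseteq\Omega_{I'}=\omega_{I'}=\{(\delta_0,\epsilon_0)\}$ (using $I'\subseteq I\cap J$ and Lemma~\ref{lem:closed}), this finishes in one line. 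So the step you flagged as ``the main subtlety'' is precisely the one the paper bypasses: Lemma~\ref{lem:intersectionIJ} makes the codimension bookkeeping unnecessary. Your route is slightly longer but has the mild advantage of also confirming explicitly that $\codim\Im A_{I\cap J}=2$, which is consistent with the picture but not strictly needed for the statement.
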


\begin{proof}
Let $I$ and $J$ be subsets of $I_0$ such that $\omega_I=\omega_J=\{(\delta_0,\epsilon_0)\}$. 
By Lemma~\ref{lem:inclusions}, there exist $I'\subseteq I$ and $J'\subseteq J$ such that $\omega_{I'}=\omega_{J'}$ and for any $i\in I'$ and $j\in J'$, 
the images of $A_{I'\backslash\{i\}}$ and $A_{J'\backslash\{j\}}$ have codimension one. 
By Lemma~\ref{lem:pointsgeneraux}, for general $B$ and $B'$, we must have $I'=J'$. 
In particular $I\cap J\supset I'$ and $\omega_{I\cap J}\subseteq \omega_{I'}=\{(\delta_0,\epsilon_0)\}$. 
By Lemma \ref{lem:intersectionIJ} the set $\omega_{I\cap J}$ is not empty, then it must be $\omega_{I\cap J}=\{(\delta_0,\epsilon_0)\}$.

Conversely, if $\omega_I$, $\omega_J$ and $\omega_{I\cap J}$ are singletons then $\omega_I=\omega_{I\cap J}=\omega_J$
because $\omega_I\subset\omega_{I\cap J}\supset\omega_J$.
\end{proof}

\begin{lem}\label{lem:pointsegmentgeneraux}
Let $I$ and $J$ be such that $\omega_I=\{(\delta_0,\epsilon_0)\}$, $\omega_J$ is an open convex part of an affine line, and $\omega_I$ is contained in this affine line. 
Suppose that for every $i\in I$, the image of $A_{I\backslash\{i\}}$ has codimension one and for every $j\in J$,  $A_{J\backslash\{j\}}$ is surjective. 

Then either $J\subseteq I$ or $B$ and $B'$ satisfy a quadratic condition.
\end{lem}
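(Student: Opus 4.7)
The plan is to adapt the strategy of Lemmas~\ref{lem:segmentsgeneraux} and \ref{lem:pointsgeneraux}. I first set $B'' := B' - B$ and apply Construction~\ref{def:alpha} to obtain rational coefficients $(\lambda_i^I)_{i\in I}$ and $(\lambda_i^{\prime I})_{i\in I}$ for two independent linear relations among the rows of $A_I$, normalized so that $\sum_i \lambda_i^I C_i = 1$ and $\sum_i \lambda_i^{\prime I} C_i = 0$, together with $(\lambda_j^J)_{j\in J}$ for the (unique, up to scalar) relation among the rows of $A_J$, normalized so that $\sum_j \lambda_j^J C_j \in \{0,1\}$. By that same construction, $\lambda_j^J \neq 0$ for all $j \in J$, and for each $i \in I$ at least one of $\lambda_i^I$, $\lambda_i^{\prime I}$ is nonzero. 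The point $(\delta_0,\epsilon_0) = \omega_I$ is then the unique solution of the system $\mathcal{S}_I$
$$\epsilon + \Big(\sum_{i\in I} \lambda_i^I B''_i\Big)\delta + \sum_{i\in I} \lambda_i^I B_i = 0, \qquad \Big(\sum_{i\in I} \lambda_i^{\prime I} B''_i\Big)\delta + \sum_{i\in I} \lambda_i^{\prime I} B_i = 0,$$
while the affine line containing $\omega_J$ has equation $\Big(\sum_j \lambda_j^J C_j\Big)\epsilon + \Big(\sum_j \lambda_j^J B''_j\Big)\delta + \sum_j \lambda_j^J B_j = 0$.

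The hypothesis $\omega_I \subseteq L$ translates into the single scalar condition that $(\delta_0,\epsilon_0)$ satisfies this line equation. I would then substitute, clearing denominators, in each of the two cases.  In the case $\sum_j \lambda_j^J C_j = 0$ the line equation has no $\epsilon$ term and, being satisfied at $(\delta_0,\epsilon_0)$, must be proportional to the second equation of $\mathcal{S}_I$; this yields
$$\Big(\sum_{i\in I} \lambda_i^{\prime I} B''_i\Big)\Big(\sum_{j\in J} \lambda_j^J B_j\Big) - \Big(\sum_{j\in J} \lambda_j^J B''_j\Big)\Big(\sum_{i\in I} \lambda_i^{\prime I} B_i\Big) = 0.$$
In the case $\sum_j \lambda_j^J C_j = 1$, subtracting the first equation of $\mathcal{S}_I$ from the line equation cancels $\epsilon$ and a similar proportionality argument produces
$$\Big(\sum_i \lambda_i^I B''_i - \sum_j \lambda_j^J B''_j\Big)\Big(\sum_i \lambda_i^{\prime I} B_i\Big) = \Big(\sum_i \lambda_i^I B_i - \sum_j \lambda_j^J B_j\Big)\Big(\sum_i \lambda_i^{\prime I} B''_i\Big).$$
In either case, the result is a polynomial identity of degree two in the entries of $B$ and $B''$.

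What remains is to show that this quadratic condition is nontrivial whenever $J \not\subseteq I$, which is the main obstacle and the most delicate part of the argument. I would expand and extract the coefficient of a well-chosen monomial $B''_a B_b$, picking $a = j^* \in J \setminus I$ (which exists since $J \not\subseteq I$) so that $\lambda_{j^*}^J \neq 0$. Inspection shows that only a single contribution survives for such $a$, and that its vanishing for all $b \in I$ would force $\lambda_b^{\prime I} = 0$ on $I$, contradicting the nontriviality of the relation $\sum_i \lambda_i^{\prime I} A_i = 0$. The bookkeeping is essentially the same as in the proof of Lemma~\ref{lem:pointsgeneraux}, but the asymmetry between the codimensions of $\Im A_I$ and $\Im A_J$ requires that the two cases be treated separately, and this is where the argument is most easily miscounted.
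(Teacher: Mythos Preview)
Your proof is correct and follows exactly the paper's approach: write the equation of the line carrying $\omega_J$ and the system $\mathcal{S}_I$ determining $(\delta_0,\epsilon_0)$, split according to $\sum_j \lambda_j^J C_j \in \{0,1\}$, derive a quadratic relation in $B,B''$, and show it is nontrivial by isolating the coefficient of a monomial involving some $j^*\in J\setminus I$. In the case $\sum_j \lambda_j^J C_j = 1$ your displayed relation is indeed the one obtained by eliminating $\epsilon$ and comparing with the second equation of $\mathcal{S}_I$ (the paper's printed relation in that case is a slip, but the nontriviality check is the same), and your extraction of the coefficient of $B''_{j^*}B_b$ with $b\in I$ works uniformly in both cases.
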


\begin{proof}
Set $B''=B'-B$.

The affine line containing both $\omega_I$ and $\omega_J$ has equation $$(\sum_{j\in J}\lambda^J_jC_j)\epsilon+(\sum_{j\in J}\lambda^J_jB''_j)\delta+\sum_{j\in J}\lambda^J_jB_j=0,$$ where $\sum_{j\in J}\lambda^J_jC_j $ is zero or one and $(\delta_0,\epsilon_0)$ is the unique solution of the system $$(\mathcal{S}_I):\quad\left\lbrace\begin{array}{ccc}
\epsilon+(\sum_{i\in I}\lambda_i^IB''_i)\delta+\sum_{i\in I}\lambda_i^IB_i&=&0\\
(\sum_{i\in I}\lambda_i^{'I}B''_i)\delta+\sum_{i\in I}\lambda_i^{'I}B_i&=&0
\end{array}\right..$$
If $\sum_{j\in J}\lambda^J_jC_j =0$ we have $$(\sum_{i\in I}\lambda_i^{'I}B''_i)(\sum_{j\in J}\lambda^J_jB_j)=(\sum_{i\in I}\lambda_i^{'I}B_i)(\sum_{j\in J}\lambda^J_jB''_j) .$$ And if $\sum_{j\in J}\lambda^J_jC_j =1$, we have $$(\sum_{i\in I}\lambda_i^IB''_i)(\sum_{j\in J}\lambda^J_jB_j)=(\sum_{i\in I}\lambda_i^IB_i)(\sum_{j\in J}\lambda^J_jB''_j) .$$
In both cases, if there exists $j\in J$ that is not in $I$, then there exists $i\in I$ such that the coefficent of $B_jB''_i$ is non zero 
(it is $\lambda_i^{'I}\lambda_j^J$ in the first case and $\lambda_i^I\lambda_j^J$ is the second case) so that the condition is not trivial.
\end{proof}

\begin{cor}\label{cor:pointsegmentgeneraux}
For $B$ and $B'$ general we have the following: for any $I$ and $J$ subsets of $I_0$ 
such that $\omega_I$ is a singleton and $\omega_J$ is an open convex part of an affine line $\ell$, the affine line $\ell$ contains 
$\omega_I$ if and only if $\omega_{I\cap J}$ is a non-empty open convex part of $\ell$.
\end{cor}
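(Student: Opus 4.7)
The plan is to follow the pattern of Corollaries \ref{cor:segmentsgeneraux} and \ref{cor:pointsgeneraux}: use Lemma \ref{lem:inclusions}(3) to reduce to a setting in which Lemma \ref{lem:pointsegmentgeneraux} applies, take $B,B'$ sufficiently general to avoid all the quadratic conditions so produced, and then conclude by the monotonicity $\Omega_{K'}\subseteq\Omega_{K}$ for $K\subseteq K'$ combined with Lemma \ref{lem:closed}.

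The reverse direction is immediate: if $\omega_{I\cap J}$ is a non-empty open convex part of $\ell$, then since $I\cap J\subseteq I$ we have $\Omega_I\subseteq\Omega_{I\cap J}=\overline{\omega_{I\cap J}}\subseteq\ell$ by Lemma \ref{lem:closed}, and in particular $\omega_I\subseteq\ell$.

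For the forward direction, assume $\omega_I\subseteq\ell$. Since $B,B'$ are general in the sense of Lemma \ref{lem:generalB}, $\omega_I$ being a singleton forces $\codim\Im A_I=2$ and $\omega_J$ being one-dimensional forces $\codim\Im A_J=1$. Applying Lemma \ref{lem:inclusions}(3) to $I$, I obtain $I'\subseteq I$ with $\codim\Im A_{I'}=2$ and $\codim\Im A_{I'\setminus\{i\}}=1$ for every $i\in I'$; by Lemma \ref{lem:inclusions}(1) together with Lemma \ref{lem:generalB}, $\omega_{I'}$ is a singleton equal to $\omega_I$. Similarly, I obtain $J'\subseteq J$ with $\codim\Im A_{J'}=1$ and $A_{J'\setminus\{j\}}$ surjective for every $j\in J'$, and $\omega_{J'}$ is a non-empty open convex part of some affine line containing $\omega_J$. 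Since $\omega_J\subseteq\omega_{J'}\cap\ell$ and $\omega_J$ is one-dimensional, this affine line must be $\ell$. The hypotheses of Lemma \ref{lem:pointsegmentgeneraux} are now satisfied for $(I',J')$, so for generic $B,B'$ we have $J'\subseteq I'$. Then $J'\subseteq I\cap J$, hence by monotonicity of the $\Omega$-sets and Lemma \ref{lem:closed}, $\Omega_{I\cap J}\subseteq\Omega_{J'}=\overline{\omega_{J'}}\subseteq\ell$, so $\omega_{I\cap J}\subseteq\ell$. Finally, Lemma \ref{lem:intersectionIJ} shows that $\omega_{I\cap J}$ contains the strict convex hull of the point of $\omega_I$ with any point of $\omega_J$, and choosing such a point distinct from $\omega_I$ (possible since $\omega_J$ is one-dimensional) yields a non-degenerate segment of $\ell$ inside $\omega_{I\cap J}$; combined with $\omega_{I\cap J}\subseteq\ell$ and Lemma \ref{lem:convopen}, this identifies $\omega_{I\cap J}$ as a non-empty open convex part of $\ell$.

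The main technical obstacle is ensuring that the reduction preserves the hypothesis $\omega_I\subseteq\ell$: a priori $\omega_{J'}$ could live on a different affine line than $\omega_J$, but the containment of a one-dimensional $\omega_J$ inside an open part of an affine line pins the supporting line of $\omega_{J'}$ down to $\ell$. Once this observation is in place, the remainder of the argument is a formal combination of Lemma \ref{lem:pointsegmentgeneraux} with the monotonicity of the $\Omega$-sets and Lemmas \ref{lem:closed}, \ref{lem:intersectionIJ}, and \ref{lem:convopen}.
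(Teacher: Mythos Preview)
Your proof is correct and follows essentially the same approach as the paper's: reduce via Lemma~\ref{lem:inclusions}(3) to minimal $I'\subseteq I$ and $J'\subseteq J$, apply Lemma~\ref{lem:pointsegmentgeneraux} to force $J'\subseteq I'$ for generic $B,B'$, deduce $J'\subseteq I\cap J$, and conclude using Lemma~\ref{lem:intersectionIJ}. Your version is in fact slightly more careful than the paper's in two places: you explicitly justify why the supporting line of $\omega_{J'}$ coincides with $\ell$, and you argue the containment $\omega_{I\cap J}\subseteq\ell$ via $\Omega_{I\cap J}\subseteq\Omega_{J'}=\overline{\omega_{J'}}$ and Lemma~\ref{lem:closed} rather than invoking Lemma~\ref{lem:inclusions}(1), which strictly speaking would require knowing in advance that $\omega_{I\cap J}$ is non-empty and of the correct dimension.
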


\begin{proof}
By Lemma~\ref{lem:inclusions}, there exist $I'\subseteq I$ and $J'\subseteq J$ such that $\omega_{I'}=\omega_{I}$, $\omega_J\subset\omega_{J'}$ and for any $i\in I'$ and $j\in J'$, 
the image of $A_{I'\backslash\{i\}}$ has codimension one and $A_{J'\backslash\{j\}}$ is surjective. 
Then, by Lemma~\ref{lem:pointsegmentgeneraux}, , for general $B$ and $B'$, we must have $J'\subseteq I'$. 

In particular, $J'\subseteq I\cap J$ and by Lemma~\ref{lem:inclusions} we have $\omega_{I\cap J}\subset\omega_{J'}$, so that $\omega_{I\cap J}$ is either empty or an open convex part of an affine line (the same affine line as for $\omega_J$).

By with Lemma~\ref{lem:intersectionIJ}, the set $\omega_{I\cap J}$ is not empty.
\end{proof}




We end this subsection with the following lemma in order to avoid the case where three affine lines intersect into a point except in only one situation.
\begin{lem}\label{lem:3int}
Let $I$, $J$ and $K$ be distinct subsets of $I_0$ such that the images of $A_I$, $A_J$ and $A_K$ have codimension one, and such that for any $i\in I$, $j\in J$ and 
$k\in K$, $A_{I\backslash\{i\}}$, $A_{J\backslash\{j\}}$ and $A_{K\backslash\{k\}}$ are surjective. If the three affine lines 
generated by  $\omega_I$, $\omega_J$ and $\omega_K$ intersect, then either $B$ and $B'$ satisfy some linear or quadratic condition or $I\cup J= I\cup K=J\cup K=I\cup J\cup K$.
\end{lem}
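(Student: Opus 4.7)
The plan is to translate ``the three lines are concurrent'' into a single polynomial condition on $(B,B')$ and to show this polynomial is nontrivial unless $I\cup J = I\cup K = J\cup K = I\cup J\cup K$.

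First I would apply Notation/Construction~\ref{def:alpha} to write each affine line $L_X$ containing $\omega_X$ (for $X \in \{I,J,K\}$) in the form
\[
a_X \epsilon + b_X \delta + c_X = 0,
\]
where $a_X := \sum_{i\in X}\lambda_i^X C_i \in \{0,1\}$, $b_X := \sum_{i\in X}\lambda_i^X(B'_i - B_i)$ and $c_X := \sum_{i\in X}\lambda_i^X B_i$. The existence of a common point of the three lines forces the $3\times 3$ matrix $M$ with rows $(a_X, b_X, c_X)$ to be singular, so $\det M = 0$. Writing $B'' := B' - B$, this determinant is a polynomial in the coordinates of $(B, B')$ of total degree at most two.

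The key step is to show that if $I\cup J = I\cup K = J\cup K = I\cup J\cup K$ fails, then the concurrency condition is a nontrivial polynomial relation. Up to relabeling I may assume there exists $\ell \in I \setminus (J\cup K)$, so that $\lambda_\ell^I \neq 0$ while $\lambda_\ell^J = \lambda_\ell^K = 0$. Expanding $\det M$, the coefficient of the monomial $B_\ell B''_j$ reduces to $\lambda_\ell^I(a_J \lambda_j^K - a_K \lambda_j^J)$. By Remark~\ref{rem:neq} applied to the pair $(J,K)$, neither $J \subseteq K$ nor $K \subseteq J$, so one can pick witnesses $j_J \in J\setminus K$ and $j_K \in K\setminus J$. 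Substituting $j = j_J$ and $j = j_K$ shows that all these coefficients can vanish only if $a_J = a_K = 0$; otherwise $\det M = 0$ already provides the sought nontrivial quadratic condition on $(B,B')$.

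In the remaining subcase $a_J = a_K = 0$, both $L_J$ and $L_K$ are vertical, so concurrency forces them to coincide, which is the quadratic condition $b_J c_K - b_K c_J = 0$. The coefficient of $B''_{j_J} B_{j_K}$ in this polynomial is $\lambda_{j_J}^J \lambda_{j_K}^K \neq 0$ (the other contribution $\lambda_{j_J}^K \lambda_{j_K}^J$ vanishes as $j_J \notin K$ and $j_K \notin J$), so this again yields the required nontrivial condition. The main obstacle will be the degenerate subcase where $\det M$ vanishes identically and must be replaced by this auxiliary quadratic; apart from that the proof is a bookkeeping exercise in the bilinear expansion of $\det M$, relying essentially on the nonvanishing clauses of Notation/Construction~\ref{def:alpha} together with Remark~\ref{rem:neq} to produce the witnesses $j_J$ and $j_K$.
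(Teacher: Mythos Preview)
Your argument is correct. The paper follows essentially the same route --- writing each line as $a_L\epsilon + b_L\delta + c_L = 0$ with $a_L\in\{0,1\}$ and extracting a nonzero monomial coefficient from the concurrency relation via an index belonging to exactly one of $I,J,K$ --- but organises the casework differently. After first invoking Lemma~\ref{lem:segmentsgeneraux} to dispose of coinciding lines, the paper splits according to whether some $a_L$ vanishes (then necessarily exactly one, and they take that one to be $a_I$) or all equal~$1$, and in each case writes down an explicit $2\times 2$ relation (e.g.\ $b_I(c_J-c_K)=c_I(b_J-b_K)$ in the first case) and checks a specific monomial. Your use of the full $3\times 3$ determinant $\det M$ is more uniform: you fix the witness $\ell\in I\setminus(J\cup K)$ once and read off the coefficient of $B_\ell B''_j$ directly, falling back on the auxiliary relation $b_Jc_K=b_Kc_J$ only in the degenerate subcase $a_J=a_K=0$. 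This buys you a cleaner treatment without the preliminary reduction to distinct lines; the paper's split makes each nonvanishing check marginally more explicit. Both rely on the same inputs from Notation~\ref{def:alpha} and Remark~\ref{rem:neq}.
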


\begin{proof}

Set $B'':=B'-B$.

By Lemma~\ref{lem:segmentsgeneraux}, if the three lines are not distinct, we are  done. Suppose then that the three lines are not distinct and meet at the point $(\delta_0,\epsilon_0)$.

Let $L=I,J,K$. The affine line containing $\omega_{L}$ has equation
 $\sum_{\ell\in L}\lambda_\ell^L D_\ell(\delta,\epsilon)=0$.
If we set 
$$
\left\lbrace
\begin{array}{r}
a_L=\sum_{\ell\in L}\lambda^L_\ell C_\ell\\
b_L=\sum_{\ell\in L}\lambda^L_\ell B''_\ell\\
c_L=\sum_{\ell\in L}\lambda^L_\ell B_\ell
\end{array}
\right.
$$ 
then the equation becomes $a_L\epsilon+b_L\delta+c_L=0$.
By hypothesis the point $(a_L,b_L,c_L)$ belongs to the plane $a\epsilon_0+b\delta_0+c=0$ for $L=I,J,K$.

By Construction \ref{def:alpha} we have $a_L\in\{0,1\}$.

\noindent \underline{Assume that there is $L$ such that $a_L=0$.} Without loss of generality we can assume that $L=I$.
Notice that then only $I$ is such that $a_I=0$, because two affine lines distinct and parallel do not meet.  

 Then $b_I(c_J-c_K)=c_I(b_J-b_K)$, that is,
 
 $$(\sum_{i\in I}\lambda^I_iB''_i)(\sum_{j\in J}\lambda^J_jB_j-\sum_{k\in K}\lambda^K_kB_k)=(\sum_{i\in I}\lambda^I_iB_i)(\sum_{j\in J}\lambda^J_jB''_j-\sum_{k\in K}\lambda^K_kB''_k).$$ 

Assuming that we do not have $I\cup J= I\cup K=J\cup K=I\cup J\cup K$  we prove that this condition is not trivial.
There are three cases, either $J\cup K\neq I\cup J\cup K$, or $I\cup J\neq I\cup J\cup K$, or $I\cup K\neq I\cup J\cup K$. The last two cases are proved in the same way.

If $J\cup K\neq I\cup J\cup K$
 let $i\in I$ be such that  $i\not\in J\cup K$.
 As $J\neq K$ we can assume without loss of generality that there is $j\in J, j\not\in K$. Then the coefficient of $B_i''B_j$ in the quadratic condition is $\lambda_i^I\lambda_j^J$ which is non zero by Construction~\ref{def:alpha}.

If $I\cup J\neq I\cup J\cup K$
let $k\in K$ such that  $k\not\in I\cup J$.
 Then for every $i\in I$ the coefficient of $B_i''B_k$ in the quadratic condition is $\lambda_i^I\lambda_k^k$ which is non zero by Construction~\ref{def:alpha}.

\medskip

\noindent \underline{Assume that $a_L=1$ for every $L$.}
Without loss of generality we can assume that $I\cup J\neq I\cup J\cup K$.
We prove that the condition
$$(b_I-b_J)(c_I-c_K)=(b_I-b_K)(c_I-c_J)$$
verified by the coefficients of the two linear equations is not trivial.
By replacing $b_L$ and $c_L$ we get

\begin{align*}
(\sum_{i\in I}\lambda^I_iB''_i&-\sum_{j\in J}\lambda^J_jB''_j)(\sum_{i\in I}\lambda^I_iB_i-\sum_{k\in K}\lambda^K_kB_k)\\
&=(\sum_{i\in I}\lambda^I_iB''_i-\sum_{k\in K}\lambda^K_kB''_k)(\sum_{i\in I}\lambda^I_iB_i-\sum_{j\in J}\lambda^J_jB_j).
\end{align*} 
 

Let $k\in K$ be such that $k\not\in I\cup J$. 
 As $I\neq J$ we can assume without loss of generality that there is $i\in I, i\not\in J$.
Then the coefficient in the quadratic condition of $B_i''B_k$ is $\lambda_i^I\lambda_k^K$ which is non zero by Construction~\ref{def:alpha}.


\end{proof}

\begin{rem}\label{rem:2alpha}
Let $L$ be such that $\dim D_L^{-1}(Im A_L)=0$, $\omega_L=\{(\delta_0,\epsilon_0)\}$ and such that for every $\ell\in L$ the image of $A_{L\setminus\{\ell\}}$ has codimension 1 in $\Qbb^{|L|}$.

Then there are two linearly independent relations $\sum_{\ell\in L} \lambda_{\ell} X_{\ell}=0$ and $\sum_{\ell\in L} \mu_{\ell} X_{\ell}=0$ on the lines of $A_L$.
We set $I=\{\ell\in L\vert\;\lambda_{\ell}\neq 0 \}$ and $J=\{\ell\in L\vert\;\mu_{\ell}\neq 0 \}$. We can suppose that $I$ and $J$ are proper in $L$. 
Then, as the image of $A_{L\setminus\{\ell\}}$ has codimension 1 in $\Qbb^{|L|}$ for every $\ell\in L$, 
 the images of $A_I$ and $A_J$ have codimension 1
and we have $I\cup J=L$.

Viceversa, given proper subsets $I$ and $J$ in $L$ such that $I\cup J=L$ and the images of $A_I$ and $A_J$ have codimension 1, we have two such independent relations on the lines of $A_L$.


\end{rem}

\subsection{Polyhedral decomposition and the geography of models}\label{linkwithHMMP}

Let $G$ be a reductive group and $H\subseteq G$ be a horospherical subgroup.
We choose a basis of $M_{\mathbb Q}$ so that $M_{\mathbb Q}\cong\mathbb Q^n$.
We fix a horospherical embedding $Z$ of $G/H$ and set 
$p=r+|S\backslash R|$. Recall that, by the notation given in Section~\ref{sec:horo}, $r$ is the number of $G$-stable prime divisor of $Z$ and $|S\backslash R|$ is the number of $B$-stable prime divisor of $G/H$. In particular, $p$ is the number of $B$-stable prime divisor of $Z$.
Let $A$ be the $p\times n$ matrix associated to the linear map $\varphi(m)=(\langle m,x_i \rangle_{i=1\ldots r},\;  \langle m,\alpha_M^{\vee} \rangle_{\alpha\in S\backslash R})$.
Denote by $J_0\subseteq I_0$ the set of indices $S\backslash R$.

Let $B=(-d_1,\ldots, -d_r, (-d_{\alpha})_{\alpha\in S\backslash R})$ and $B'=(-d'_1,\ldots, -d'_r, (-d'_{\alpha})_{\alpha\in S\backslash R})$
be such that $D=\sum_{i=1}^d d_i Z_i+\sum_{\alpha\in S\backslash R} d_{\alpha} D_{\alpha}$ and $D'=\sum_{i=1}^d d'_i Z_i+\sum_{\alpha\in S\backslash R} d'_{\alpha} D_{\alpha}$ are ample divisors on two varieties $X$ and $Y$ respectively, in the sense that $\tilde{Q}_D$ and $\tilde{Q}_{D'}$ are pseudo-moment polytopes for $X$ and $Y$ respectively. Notice that it may occur that some $D_i$'s are not divisors of $X$ or $Y$ as we observe in Lemma~\ref{lem:HMMPendsXT}.
Let $C=(c_1,\ldots, c_r, (c_{\alpha})_{\alpha\in S\backslash R})$ be such that $c_i>0$ for every $i$ and 
$c_{\alpha}>0$ for every $\alpha$.

\noindent Since $X$ is projective, $\tilde{Q}_D=\{x\in M_\Qbb\,\mid\,Ax\geq B\}$ is a polytope and then Condition~\ref{condBDD} is verified by A.

From now on, following remark~\ref{rem:notpseudo}, if a polytope $Q$ is defined by $\{x\in M_\Qbb\,\mid\,Ax\geq D\}$, with $D=(-d_1,\ldots, -d_r, (-d_{\alpha})_{\alpha\in S\backslash R})$, we will say that it is the pseudo-moment polytope associated to the moment polytope $v^0+Q$ with $v^0=\sum_{\alpha\in S\backslash R}d_\alpha\varpi_\alpha$, or similarly the pseudo-moment polytope of the associated $G/H$-embedding as soon as $v^0+Q$ is a $G/H$-polytope (Definition \ref{def:G/H-eq}).


\noindent We suppose that $B$ and $B'$ are general in the sense of the previous subsection, that is, that they belong to the Zariski open subset of the ample cone such that the conditions of Corollaries \ref{cor:generalB}, \ref{cor:segmentsgeneraux}, \ref{cor:pointsgeneraux}, \ref{cor:pointsegmentgeneraux} and Lemma \ref{lem:3int} are satisfied.

\smallskip

Then we define the following locally closed subsets of $\Omega_{\emptyset}$.
We denote by ${\rm Aff}(S)$ the affine space generated by a set $S$.
$$
\begin{array}{ll}
U_2=&\{(\delta,\epsilon)\in \Omega_{\emptyset}\vert\; (\delta,\epsilon)\in\omega_I\Rightarrow\dim \omega_I=2\}\\
U_1=&\{(\delta,\epsilon)\in \Omega_{\emptyset}\vert\;(\delta,\epsilon)\not\in U_2,\;(\delta,\epsilon)\in\omega_I\cap\omega_J\Rightarrow\dim {\rm Aff}\,\omega_I\cap{\rm Aff}\,\omega_J=1\}\\
U_0=&\{(\delta,\epsilon)\in \Omega_{\emptyset}\vert\;\exists I, \{(\delta,\epsilon)\}= \omega_I\}\\
U'_0=&\Omega_{\emptyset}\setminus (U_2\cup U_1 \cup U_0).
\end{array}
$$
Note that $U_0,U'_0$ are finite sets by the generality assumptions on $B$ and $B'$.

\begin{notat}
Let $(\delta,\epsilon)$ be such that 
 $P^{\delta,\epsilon}$ is not empty. 
\begin{enumerate}
\item If $(\delta,\epsilon)\in U_2$, by Definition \ref{def:G/H-eq} and Proposition \ref{prop:classprojembpoly}, 
the polytope $P^{\delta,\epsilon}$ is the pseudo-moment polytope of a $G/H$-embedding which we denote by $X^{\delta,\epsilon}$. 
\item If $(\delta,\epsilon)\in U_1$, by Lemma \ref{lem:Gmorph} there is $H'\supseteq H$ such that $P^{\delta,\epsilon}$ is the pseudo-moment polytope of a $G/H'$-embedding which we denote by $Y^{\delta,\epsilon}$.
\item If $(\delta,\epsilon)\in U_0$, by Lemma \ref{lem:Gmorph} there is $H'\supseteq H$ such that $P^{\delta,\epsilon}$ is the pseudo-moment polytope of a $G/H'$-embedding which we denote by $Z^{\delta,\epsilon}$. 
\end{enumerate}
\end{notat}

\begin{rem}\label{rem:ontheline}
If $(\delta,\epsilon)\in\omega_\emptyset$,  by Definition \ref{def:G/H-eq} and Proposition \ref{prop:classprojembpoly} the polytope $P^{\delta,\epsilon}$ is  the pseudo-moment polytope associated to a $G/H$-polytope. 
On the other hand, if $(\delta,\epsilon)\in\Omega_\emptyset\backslash\omega_\emptyset$, then there is $H\subsetneq H'$ such that   $P^{\delta,\epsilon}$ is  the pseudo-moment polytope associated to a $G/H'$-polytope. 
In this case, $H'$ is defined as follows: let $M'_\Qbb$ be the linear subspace generated by $P^{\delta,\epsilon}$ and $M'$ the sublattice $M'_\Qbb\cap M$ of $M$, let $R'$ be the union of $R$ with the set of simple roots $\alpha\in S\backslash R$ such that $P^{\delta,\epsilon}\subset\mathcal{H}^{\delta,\epsilon}_\alpha$, and define $P'$ to be the parabolic subgroup containing $B$ whose simple roots are $R'$; then $H'$ is the kernel of characters of $M'\subset\mathfrak{X}(P')$.

\end{rem}

\begin{rem}\label{rem:U0Qf}
By Corollary~\ref{cor:generalB}, the generality assumption on $B$ and $B'$ and
Lemma \ref{lem:Qf} for every $(\delta,\epsilon)\in U_2$
the variety $X^{\delta,\epsilon}$ is $\Qbb$-factorial. 
In particular, by Corollary~\ref{cor:pic}, its Picard number 
is 
\begin{equation}\label{eq:pic}
\rho(X^{\delta,\epsilon})=|I_1^{\delta,\epsilon}\sqcup J_0|-n
\end{equation}

 where $I_1^{\delta,\epsilon}:=\{i\in I_0\backslash J_0\,\mid\,(\delta,\epsilon)\in\omega_i\}$. Since $(\delta,\epsilon)\in U_2$, we can replace $I_1^{\delta,\epsilon}$ by $I_2^{\delta,\epsilon}=\{i\in I_0\backslash J_0\,\mid\,(\delta,\epsilon)\in\Omega_i\}$ in the formula. 
\end{rem}

\medskip

Let $(\delta_0,\epsilon_0)\in U_2$.
Let $\ell\colon \Qbb\to \Qbb^2$ be the parametrisation of a rational affine line 
such that $\ell(0)=(\delta_0,\epsilon_0)$.
Let $\bar t>0$ be the minimum such that $\ell(\bar t)\in\Omega_{\emptyset}\setminus U_2$.
There is a $G$-equivariant morphism from $X^{\delta_0,\epsilon_0}$ to
the variety corresponding to $P^{\ell(\bar t)}$.
Indeed, for every $t\in[0,\bar t)$, 
 the polytope $P^{\ell(t)}$ is the moment polytope of a polarized variety $(X^{\ell(t)},D^{\ell(t)})$, and all the $P^{\ell(t)}$ for $t\in[0,\bar t)$ are equivalent.
 Thus the corresponding varieties are isomorphic to $X^{\delta_0,\epsilon_0}$.
The divisor $D(\ell(\bar t))$ is nef, but not ample, on $X^{\delta_0,\epsilon_0}$.
Hence, by
Lemma \ref{lem:Gmorph} 
there is a $G$ equivariant morphism from $X^{\delta_0,\epsilon_0}$ to
the variety corresponding to $P^{\ell(\bar t)}$.

\subsubsection{If $\ell(\bar t)\in U_1$}
We assume in this paragraph that $\ell(\bar t)=(\delta_1,\epsilon_1)\in U_1$.
The main result of this paragraph is Proposition \ref{prop:morphXY}, which describes
 the different sorts of $G$-equivariant morphisms we can get from $X^{\delta_0,\epsilon_0}$ to $Y^{\delta_1,\epsilon_1}$.
 We start with some preparatory lemmas and remarks.

 \begin{lem}\label{lem:conditionMori}
 Let  $(\delta,\epsilon)\in U_1$.
Let $I$ be such that  $(\delta,\epsilon)\in\omega_I$ and $\dim \omega_I=1$.
Assume moreover that $I$ is minimal with the property.
 Denote by $I_+:=\{i\in I\,\mid\, \lambda_i^I>0\}$ and $I_-:=\{i\in I\,\mid\, \lambda_i^I<0\}$. Note that $I=I_+\sqcup I_-$.
 
 Then $(\delta,\epsilon)\in\omega_\emptyset$ if and only if both $I_+$ and $I_-$ are not empty.
 \end{lem}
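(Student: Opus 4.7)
The plan is to exploit the unique (up to scalar) linear relation among the rows $(A_i)_{i\in I}$. Because $I$ is minimal with $(\delta,\epsilon)\in\omega_I$ and $\dim\omega_I=1$, Lemma~\ref{lem:inclusions}(3) forces $A_{I\setminus\{i\}}$ to be surjective for every $i\in I$, so Construction~\ref{def:alpha} provides the relation $\sum_{i\in I}\lambda_i^I A_i=0$ with $\lambda_i^I\neq 0$ for every $i\in I$, and this relation generates the whole (one-dimensional) space of linear relations among $(A_i)_{i\in I}$. In particular $I=I_+\sqcup I_-$.

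For the ``only if'' direction I would pick $y\in\Qbb^n$ witnessing $(\delta,\epsilon)\in\omega_\emptyset$, so that $A_iy>D_i(\delta,\epsilon)$ for every $i\in I_0$, and $x\in\Qbb^n$ witnessing $(\delta,\epsilon)\in\omega_I$, so that $A_ix=D_i(\delta,\epsilon)$ for every $i\in I$. Then
\[
0=\sum_{i\in I}\lambda_i^I A_i(y-x)=\sum_{i\in I}\lambda_i^I\bigl(A_iy-D_i(\delta,\epsilon)\bigr),
\]
and every factor $A_iy-D_i(\delta,\epsilon)$ is strictly positive, so the vanishing of the sum forces the signs of the $\lambda_i^I$ to be mixed; both $I_+$ and $I_-$ must be nonempty.

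For the converse I would first invoke Gordan's alternative over $\Qbb$ (valid because the matrices are rational) to produce $v\in\Qbb^n$ with $A_iv>0$ for every $i\in I$: the only obstruction would be a nonzero $\mu\in\Qbb_{\geq 0}^I$ with $\sum_{i\in I}\mu_iA_i=0$, necessarily a scalar multiple of $\lambda^I$, which has mixed signs by hypothesis --- contradiction. Picking $x$ with $A_Ix=D_I(\delta,\epsilon)$ and $A_{\bar I}x>D_{\bar I}(\delta,\epsilon)$ and setting $y:=x+tv$ for a small rational $t>0$ then yields $A_iy>D_i(\delta,\epsilon)$ for every $i\in I_0$ (strict positivity being preserved on $\bar I$ for $t$ small, and produced on $I$ by the choice of $v$), hence $(\delta,\epsilon)\in\omega_\emptyset$.

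The only delicate point I foresee is justifying $\lambda_i^I\neq 0$ for all $i\in I$, on which both directions of the argument rest: this is exactly what the minimality of $I$ buys, through Lemma~\ref{lem:inclusions}(3) and Construction~\ref{def:alpha}. Once that is in place, the rest is a routine application of Gordan's alternative combined with the openness of the strict inequalities defining $\omega_\emptyset$.
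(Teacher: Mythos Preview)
Your proof is correct and follows the same idea as the paper's: both directions rest on the one-dimensional relation $\sum_{i\in I}\lambda_i^IA_i=0$ with all $\lambda_i^I\neq 0$ guaranteed by minimality. The only differences are cosmetic: for the ``if'' direction the paper constructs $v$ with $A_Iv>0$ by hand (pick $i\in I_+$, use surjectivity of $A_{I\setminus\{i\}}$ to make $A_jv>0$ for $j\neq i$ with $A_jv$ large on $I_-$, so the relation forces $A_iv>0$) rather than invoking Gordan's alternative, and for the ``only if'' direction it runs your computation contrapositively.
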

 
 \begin{proof}
 Let $\sum_{i\in I}\lambda_i^I X_i=0$ be a relation satified by the lines of $A_I$ as in Construction \ref{def:alpha}.
Notice that $\lambda_i^I\neq 0$ for any $i\in I$ by minimality of $I$.

 Fix $x\in\Qbb^n$ such that $A_Ix=D_I(\delta,\epsilon)$ and $A_{\bar{I}}x\geq D_{\bar{I}}(\delta,\epsilon)$.
 
Assume first that  $I_+$ and $I_-$ are not empty. 
Let $i\in I_+$, then 
 $$A_i=-\sum_{j\in I_+, j\neq i}\frac{\lambda_j^I}{\lambda_i^I} A_j+\sum_{j\in I_-}\frac{-\lambda_j^I}{\lambda_i^I} A_j.$$
By the minimality of $I$, the matrix $A_{I\backslash\{i\}}$ is surjective.
Therefore, there exists $y\in\Qbb^n$ such that $A_jy>0$ for any $j\in I\backslash\{i\}$ and $A_jy$ big enough for $j\in I_-$ so that $A_iy$ is also positive. 
 Then for any $t>0$ small enough, we have $A(x+ty)>D(\delta,\epsilon)$. 
 This means that $(\delta,\epsilon)\in \omega_\emptyset$.
 
 Conversely, suppose that $I^+=I$ and $I_-=\emptyset$. 
First, we prove that  $\sum_{i\in I}\lambda_i^I D_i(\delta,\epsilon)=0$.
Since $(\delta,\epsilon)\in\omega_I$, 
there is $x\in\Qbb^n$ such that $Ax\geq D(\delta,\epsilon)$ and $A_Ix= D_I(\delta,\epsilon)$. 
Thus $0=\sum_{i\in I}\lambda_i^I A_ix=\sum_{i\in I}\lambda_i^I D_i(\delta,\epsilon)$. 

Now, let $y\in\Qbb^n$ such that $Ay\geq  D(\delta,\epsilon)$. Then $0=\sum_{i\in I}\lambda_i^I A_iy\geq \sum_{i\in I}\lambda_i^I D_i(\delta,\epsilon)=0$, so that $A_Iy=D_I(\delta,\epsilon)$. In particular, $(\delta,\epsilon)\not\in \omega_\emptyset$.
 
 The same occurs if $I^-=I$.
 \end{proof}
 
 \begin{rem}\label{rem:IplusIminus}
 The proof implies in particular that, if both $I_+$ and $I_-$ are not empty, then there is $y\in\Qbb^n$ such that $A_Iy>0$.
 \end{rem}

\begin{lem}\label{lem:conditiondivisorielle}
 Let  $(\delta,\epsilon)\in U_1\cap\omega_\emptyset$.
Let $I$ be such that  $(\delta,\epsilon)\in\omega_I$ and $\dim \omega_I=1$.
Assume moreover that $I$ is minimal with the property.
 Let $i\in I_0\backslash J_0$. Then $(\delta,\epsilon)\in\Omega_i\backslash \omega_i$ if and only if $ I_+$ or $I_-$ equals $\{i\}$.

In particular, there is at most one $i\in I_0\backslash J_0$, such that $(\delta,\epsilon)$ is in $\Omega_i\backslash \omega_i$. 
\end{lem}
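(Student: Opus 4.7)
The plan is to rephrase $(\delta,\epsilon)\in\Omega_i\setminus\omega_i$ as the face-geometric statement that $F_i^{\delta,\epsilon}$ is non-empty and contained in some $\mathcal{H}_j^{\delta,\epsilon}$ with $j\ne i$, and then to analyse both directions using the unique linear relation $\sum_{j\in I}\lambda_j^IA_j=0$ on the rows of $A_I$ given by Construction~\ref{def:alpha}.

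For the forward direction, suppose without loss of generality that $I_+=\{i\}$. Lemma~\ref{lem:conditionMori} combined with $(\delta,\epsilon)\in\omega_\emptyset$ forces $I_-\ne\emptyset$, so the relation rewrites as $A_i=\sum_{j\in I_-}\mu_jA_j$ with $\mu_j=-\lambda_j^I/\lambda_i^I>0$; evaluating at any element of $\omega_I$ yields $D_i(\delta,\epsilon)=\sum_{j\in I_-}\mu_jD_j(\delta,\epsilon)$. For any $x$ satisfying $Ax\ge D(\delta,\epsilon)$ and $A_ix=D_i(\delta,\epsilon)$, writing $\sum_{j\in I_-}\mu_j(A_jx-D_j(\delta,\epsilon))=0$ with all $\mu_j>0$ and all summands non-negative forces $A_jx=D_j(\delta,\epsilon)$ for every $j\in I_-$. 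Hence $F_i^{\delta,\epsilon}\subseteq\mathcal{H}_j^{\delta,\epsilon}$ for every $j\in I_-$, and $(\delta,\epsilon)\in\Omega_i\setminus\omega_i$ follows since $F_i^{\delta,\epsilon}$ is non-empty (as $\omega_I\subseteq\Omega_i$).

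For the converse I would argue the contrapositive: assume $I_+\ne\{i\}$, $I_-\ne\{i\}$, and $(\delta,\epsilon)\in\Omega_i$, and prove $(\delta,\epsilon)\in\omega_i$. Let $I'\ni i$ be maximal with $(\delta,\epsilon)\in\omega_{I'}$; it is enough to prove $I'=\{i\}$. The case $\dim\omega_{I'}=0$ is excluded by $(\delta,\epsilon)\in U_1$ (it would place $(\delta,\epsilon)$ in $U_0$), and $\dim\omega_{I'}=2$ combined with Lemma~\ref{lem:inclusions}(1) gives $\omega_{I'}\subseteq\omega_i$ directly. So assume $\dim\omega_{I'}=1$. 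By Corollary~\ref{cor:segmentsgeneraux} and the minimality of $I$, one obtains $I\subseteq I'$; since $\Im A_{I'}$ and $\Im A_I$ both have codimension one (by Lemma~\ref{lem:generalB}), the unique linear relation on the rows of $A_{I'}$ must coincide up to a scalar with the relation on $A_I$ extended by zeros on $I'\setminus I$, so in particular $\lambda_i^{I'}=0$. This leaves enough freedom to construct $y\in\Qbb^n$ with $A_iy=0$ and $A_jy>0$ for every $j\in I'\setminus\{i\}$: the coordinates on $I'\setminus(I\cup\{i\})$ are unconstrained, while on $I$ the requirement $\sum_{j\in I}\lambda_j^I(A_jy)=0$ together with $A_iy=0$ (when $i\in I$) admits strictly positive solutions precisely because $I_+\ne\{i\}$ and $I_-\ne\{i\}$ ensure that both signs remain among $(\lambda_j^I)_{j\in I\setminus\{i\}}$. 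Then $x+ty$, for $x\in\omega_{I'}$ and $t>0$ small, witnesses $(\delta,\epsilon)\in\omega_i$, contradicting $I'\supsetneq\{i\}$.

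The \emph{at most one} assertion follows since two distinct indices $i,j\in I_0\setminus J_0$ both satisfying the condition would force $I=\{i,j\}$ with $\{I_+,I_-\}=\{\{i\},\{j\}\}$, hence $A_i=x_i$ and $A_j=x_j$ positively proportional; this contradicts the fact that distinct $G$-stable prime divisors correspond to distinct rays of the colored fan with non-proportional primitive generators. The main obstacle is the converse direction, specifically the combinatorial control over the maximal $I'$ absorbing $F_i^{\delta,\epsilon}$ and the verification that the unique relation on $A_{I'}$ has zero coefficient on $i$; once these are established the construction of the perturbation $y$ is routine linear algebra.
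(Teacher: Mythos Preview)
Your argument follows essentially the same route as the paper's: both directions rest on the unique relation $\sum_{j\in I}\lambda_j^I A_j=0$, the reduction via Corollary~\ref{cor:segmentsgeneraux} (together with the minimality of $I$) to a one-dimensional $\omega_{I'}$ with $I\subseteq I'$, and a perturbation $y$ chosen so that $A_iy=0$ while $A_jy>0$ on the remaining indices. The paper organises the converse by taking a \emph{minimal} $J\ni i$ with $(\delta,\epsilon)\in\omega_J$ and first proving $i\in I$, whereas you take a maximal $I'$; the linear-algebra content is the same.

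Two expository slips should be fixed. First, the assertion ``in particular $\lambda_i^{I'}=0$'' is only valid when $i\in I'\setminus I$; if $i\in I$ then $\lambda_i^{I'}=\lambda_i^I\neq 0$. Your construction of $y$ nonetheless covers both cases correctly, since you impose $A_iy=0$ separately ``when $i\in I$'' and use that both signs survive in $(\lambda_j^I)_{j\in I\setminus\{i\}}$. Second, the closing phrase ``contradicting $I'\supsetneq\{i\}$'' is a non-sequitur: showing $(\delta,\epsilon)\in\omega_i$ does not contradict $(\delta,\epsilon)\in\omega_{I'}$ for a larger $I'$; it is simply the desired conclusion, so the proof-by-contradiction wrapper should be dropped. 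With these corrections the proof is complete and matches the paper's.
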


\begin{proof}
 Let $\sum_{i\in I}\lambda_i^I X_i=0$ be a relation satified by the lines of $A_I$ as in Construction \ref{def:alpha}.
Notice that $\lambda_i^I\neq 0$ for any $i\in I$ by minimality of $I$.
 
Suppose that there exists $i\in I_0\backslash J_0$ such that $(\delta,\epsilon)\in\Omega_i\backslash \omega_i$. Then there exists $J\subseteq I_0$ containing $i$ such that $(\delta,\epsilon)\in\omega_J$. Pick such $J$ minimal. By Lemma~\ref{lem:inclusions}(1), since $(\delta,\epsilon)\not\in\omega_i$, the image of $A_J$ has codimension either one or two. 
 By assumption on $(\delta,\epsilon)$, $\omega_J$ is not a point, so that $\omega_J$ is one-dimensional and then by Corollary \ref{cor:segmentsgeneraux} and the generality assumption, $I\subset J$. 
 By minimality of $J$, we have $J=I\cup\{i\}$.
 
Assume by contradiction that $i\not\in I$.
With Notation~\ref{def:alpha}, we have $\lambda_i^J=0$, that is, $J$ and $I$ give the same relation.

Since $(\delta,\epsilon)\in\omega_\emptyset$, by Lemma~\ref{lem:conditionMori} and Remark \ref{rem:IplusIminus} and \ref{rem:surjminusj}, there exists $y\in\Qbb^n$ such that $A_I y>0$ and such that $A_i y=0$. Let $x\in\Qbb^n$ such that $A_Jx=D_J(\delta,\epsilon)$ and $A_{\bar{J}}x>D_{\bar{J}}(\delta,\epsilon)$. Then for any $t>0$ small enough $A_i(x+ty)=D_i(\delta,\epsilon)$ and  $A_{\bar{i}}(x+ty)>D_{\bar{i}}(\delta,\epsilon)$.  This is a contradiction with $(\delta,\epsilon)\not\in\omega_i$. Then $i\in I$ and $I=J$.

By assumption the matrix $A_{I\backslash\{i\}}$ is surjective.
If $I\backslash\{i\}$ is not contained in either $I_+$ or $I_-$, we can find $y\in\Qbb^n$ such that $A_jy>0$ for any $j\in I\backslash\{i\}$ and $\sum_{j\in I\backslash\{i\}}\lambda_j^IA_jy=0$, which implies that $A_iy=0$. 
Let $x\in\Qbb^n$ be such that $A_Ix=D_I(\delta,\epsilon)$ and $A_{\bar{I}}x>D_{\bar{I}}(\delta,\epsilon)$. Then, for any $t>0$ small enough we have $A_i(x+ty)=D_i(\delta,\epsilon)$ and  $A_{\bar{i}}(x+ty)>D_{\bar{i}}(\delta,\epsilon)$.  This is a contradiction with $(\delta,\epsilon)\not\in\omega_i$. 
We conclude with Lemma~\ref{lem:conditionMori} and the hypothesis $(\delta,\epsilon)\in\omega_\emptyset$, that $\{i\}=I_+$ or $I_-$.\\

Conversely, suppose that $I_+=\{i\}$.  Note that $(\delta,\epsilon)\in \Omega_i$ is obvious. Since there exists $x\in\Qbb^n$ such that $A_Ix=D_I(\delta,\epsilon)$, we have $D_i(\delta,\epsilon)=\sum_{j\in I\backslash\{i\}}\frac{-\lambda_j^I}{\lambda_i^I} D_j(\delta,\epsilon)$. 

For any $y\in\Qbb^n$, such that $A_{\bar{i}}y\geq D_{\bar{i}}(\delta,\epsilon)$ we have $$A_iy=\sum_{j\in I\backslash\{i\}}\frac{-\lambda_j^I}{\lambda_i^I}A_jy \geq\sum_{j\in I\backslash\{i\}}\frac{-\lambda_j^I}{\lambda_i^I}D_j(\delta,\epsilon)=D_i(\delta,\epsilon).$$ And we have equality if and only if  $A_{I\backslash\{i\}}y=D_{I\backslash\{i\}}(\delta,\epsilon)$. Then $(\delta,\epsilon)\not\in \omega_i$.\\

For the last statment, note that if $|I|=1$ then $A_I$ is a zero line and $i\in J_0$; and if $|I|=2$ with $|I_+|=1$ then the two lines of $A_I$ generate the same ray of $N_\Qbb$ and at most one could be the primitive element of the ray, so that the other line has index in $J_0$.
\end{proof}

\begin{lem}\label{lem:conditiondivisorielleMori}
 Let  $(\delta,\epsilon)\in U_1\cap \Omega_\emptyset\backslash\omega_\emptyset$.
Let $I$ be such that  $(\delta,\epsilon)\in\omega_I$ and $\dim \omega_I=1$.
Assume moreover that $I$ is minimal with the property.
 Let $i\in I_0\backslash I$. Then either $(\delta,\epsilon)\not\in\Omega_{I\cup \{i\}}$ or $(\delta,\epsilon)\in \omega_{I\cup \{i\}}$.
\end{lem}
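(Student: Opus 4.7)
The plan is to argue by contradiction: assume $(\delta,\epsilon)\in\Omega_{I\cup\{i\}}\setminus\omega_{I\cup\{i\}}$, and let $J\supseteq I\cup\{i\}$ be the largest subset of $I_0$ with $(\delta,\epsilon)\in\omega_J$; by the assumption $J\supsetneq I\cup\{i\}$. My first task is to control $\omega_J$. Taking $I'=J'=J$ in the definition of $U_1$ forces $\dim\mathrm{aff}(\omega_J)=1$, so $\omega_J$ cannot be a singleton; on the other hand, since $I\subseteq J$ we have $\omega_J\subseteq\Omega_I=\overline{\omega_I}$ by Lemma \ref{lem:closed}, and $\omega_I$ is one-dimensional, so $\omega_J$ cannot be two-dimensional either. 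Thus $\omega_J$ is a one-dimensional open convex part of an affine line, and the $U_1$ condition applied to $I$ and $J$ yields $\mathrm{aff}(\omega_J)=\mathrm{aff}(\omega_I)$.

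Next, I apply Lemma \ref{lem:inclusions}(3) to $J$ to obtain $J'\subseteq J$ with $\omega_J\subseteq\omega_{J'}$, the image of $A_{J'}$ still of codimension one, and $A_{J'\setminus\{j\}}$ surjective for every $j\in J'$; in particular $\omega_{J'}$ is one-dimensional on the same affine line as $\omega_I$. Applying Corollary \ref{cor:segmentsgeneraux} to $I$ and $J'$ shows that $\omega_{I\cap J'}$ is a non-empty one-dimensional open convex part of the same line containing $(\delta,\epsilon)$. Combined with the minimality of $I$, this forces $I\cap J'=I$, i.e.\ $I\subseteq J'$. Moreover, the minimality of $I$ also implies (via Lemma \ref{lem:inclusions}(1)--(2)) that $A_{I\setminus\{i'\}}$ is surjective for every $i'\in I$, so Remark \ref{rem:neq} applied to $I$ and $J'$ rules out the case $I\subsetneq J'$. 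Hence $I=J'$.

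The equality $J'=I$ means that the one-dimensional relation space of $A_J$ is generated by the relation on $A_I$ extended by zero on $J\setminus I$. In particular $\{A_k\}_{k\in J\setminus I}$ is linearly independent modulo $\{A_{i'}\}_{i'\in I}$. Since $i\in J\setminus I$, the row $A_i$ is independent of $\{A_{i'}\}_{i'\in I}$, so $\ker A_{I\cup\{i\}}$ has dimension $n-|I|$, and $\{A_k\}_{k\in J\setminus(I\cup\{i\})}$ remains linearly independent modulo $A_{I\cup\{i\}}$. A rank--nullity count then shows that the restriction $A_{J\setminus(I\cup\{i\})}|_{\ker A_{I\cup\{i\}}}$ is surjective onto $\Qbb^{J\setminus(I\cup\{i\})}$, so I can pick $v\in\ker A_{I\cup\{i\}}$ with $A_k v>0$ for every $k\in J\setminus(I\cup\{i\})$.

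To conclude, let $x$ be a witness for $(\delta,\epsilon)\in\omega_J$. For $t>0$ sufficiently small, the vector $x':=x+tv$ satisfies $A_{I\cup\{i\}}x'=D_{I\cup\{i\}}(\delta,\epsilon)$; the strict inequality $A_k x'>D_k(\delta,\epsilon)$ holds for $k\in J\setminus(I\cup\{i\})$ by the choice of $v$, and for $k\in\bar J$ by continuity from the strict inequalities satisfied by $x$. Hence $x'$ witnesses $(\delta,\epsilon)\in\omega_{I\cup\{i\}}$, contradicting the assumption. The main obstacle is the middle step---reducing $J'$ to $I$---where the generality of $B$ and $B'$ enters through Corollary \ref{cor:segmentsgeneraux} and Remark \ref{rem:neq}; once this reduction is secured, the explicit perturbation finishing the proof is routine.
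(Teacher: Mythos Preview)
Your argument is correct, but it is considerably more elaborate than necessary. The paper's proof is three lines: pick any $J\supseteq I\cup\{i\}$ with $(\delta,\epsilon)\in\omega_J$; the hypothesis $(\delta,\epsilon)\in U_1$ forces $\omega_J$ to be one-dimensional; then Lemma~\ref{lem:inclusions} gives $\omega_J\subseteq\omega_{I\cup\{i\}}$ directly, so $(\delta,\epsilon)\in\omega_{I\cup\{i\}}$. The implicit step here is that $\omega_{I\cup\{i\}}$ is non-empty and one-dimensional: since $I\subseteq I\cup\{i\}\subseteq J$ and both $A_I$, $A_J$ have images of codimension one, so does $A_{I\cup\{i\}}$, whence Lemma~\ref{lem:inclusions}(2) and Lemma~\ref{lem:generalB} yield the required non-emptiness and dimension.

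Your route is genuinely different: you bypass Lemma~\ref{lem:inclusions}(1) entirely, instead invoking Corollary~\ref{cor:segmentsgeneraux} and Remark~\ref{rem:neq} to pin down $J'=I$, and then run an explicit perturbation to land in $\omega_{I\cup\{i\}}$. This is more concrete---you exhibit the witness $x'$ by hand---but the machinery of Corollary~\ref{cor:segmentsgeneraux} and the linear-algebra bookkeeping are heavier than what the paper needs. The perturbation in your last paragraph is essentially a re-derivation of the proof of Lemma~\ref{lem:inclusions}(1)/(2) in this special case. Two minor points: the word ``largest'' in your choice of $J$ is not well-defined (several maximal such $J$ may exist) and is never used; any $J\supseteq I\cup\{i\}$ with $(\delta,\epsilon)\in\omega_J$ suffices. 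Also, the inequality $|J|\le n+1$ needed for your surjectivity claim holds because $\operatorname{rank}A_J=|J|-1\le n$, which you might make explicit.
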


\begin{proof}
Let $i\in I_0\backslash I$ such that $(\delta,\epsilon)\in\Omega_{I\cup \{i\}}$. There exists $J\subseteq I_0$ containing $i$ and $I$ such that $(\delta,\epsilon)\in\omega_J$. By hypothesis on $(\delta,\epsilon)$ the set $\omega_J$ is one-dimensional and Lemma~\ref{lem:inclusions} implies that $\omega_J\subseteq \omega_{I\cup \{i\}}$. 
In particular, $(\delta,\epsilon)\in \omega_{I\cup \{i\}}$.
\end{proof}

\begin{cor}\label{cor:Bdiv}
\begin{enumerate}
Let $(\delta,\epsilon)\in U_1$.
\item\label{Bdiv1} Assume that $(\delta,\epsilon)\in \omega_{\emptyset}$. If there is $i\in I_0\backslash J_0$ such that $I_+$ or $I_-$ equals $\{i\}$, then the $G$-stable prime divisors of  $Y^{\delta,\epsilon}$ are in bijection with  $I_1^{\delta,\epsilon}=I_2^{\delta,\epsilon}\backslash\{i\}$. Otherwise 
the $G$-stable prime divisors of  $Y^{\delta,\epsilon}$ are in bijection with $I_1^{\delta,\epsilon}=I_2^{\delta,\epsilon}$.
\item\label{Bdiv2} Assume that $(\delta,\epsilon)\in \Omega_\emptyset\backslash\omega_{\emptyset}$.
Then the $G$-stable prime divisors of $Y^{\delta,\epsilon}$ are in bijection with  $$\{i\in I_0\backslash I\,\mid\,(\delta,\epsilon)\in\omega_{I\cup\{i\}}\}=\{i\in I_0\backslash I\,\mid\,(\delta,\epsilon)\in\Omega_{I\cup\{i\}}\}=I_2^{\delta,\epsilon}\backslash (I_2^{\delta,\epsilon}\cap I).$$ 
\end{enumerate}
\end{cor}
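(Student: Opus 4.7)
My plan is to deduce both statements from the following general principle, implicit throughout Section~\ref{sec:horo}: the $G$-stable prime divisors of a projective horospherical $G/H'$-embedding correspond bijectively to the facets of its pseudo-moment polytope that are not contained in any wall of the dominant chamber of the parabolic $P' = N_G(H')$. This follows from Proposition~\ref{cor:divcrit}(\ref{divcrit3})--(\ref{divcrit4}) together with the description of $B$-stable prime divisors preceding Theorem~\ref{th:divcrit}.

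For part~(\ref{Bdiv1}), the hypothesis $(\delta,\epsilon) \in \omega_\emptyset$ keeps $P^{\delta,\epsilon}$ full-dimensional in $M_\Qbb$, so by Remark~\ref{rem:ontheline} we have $H' = H$ and the walls of the dominant chamber are still indexed by $J_0$. Consequently the $G$-stable prime divisors of $Y^{\delta,\epsilon}$ correspond to the indices $i \in I_0 \setminus J_0$ such that $\mathcal{H}_i^{\delta,\epsilon}$ cuts out a facet of $P^{\delta,\epsilon}$, that is, to the set $I_1^{\delta,\epsilon}$. The comparison with $I_2^{\delta,\epsilon}$ reduces to detecting the $i \in I_0 \setminus J_0$ with $(\delta,\epsilon) \in \Omega_i \setminus \omega_i$, and by Lemma~\ref{lem:conditiondivisorielle} such an $i$ exists precisely when $I_+ = \{i\}$ or $I_- = \{i\}$, in which case it is unique. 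This yields the dichotomy of part~(\ref{Bdiv1}).

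For part~(\ref{Bdiv2}), the hypothesis $(\delta,\epsilon) \in \Omega_\emptyset \setminus \omega_\emptyset$ combined with the argument in the proof of Lemma~\ref{lem:conditionMori} (the case where $I_+$ or $I_-$ is empty) forces $A_I x = D_I(\delta,\epsilon)$ for every $x \in P^{\delta,\epsilon}$, so $P^{\delta,\epsilon}$ lies in the proper affine subspace $\bigcap_{i \in I}\mathcal{H}_i^{\delta,\epsilon}$. By Remark~\ref{rem:ontheline}, $Y^{\delta,\epsilon}$ is then a $G/H'$-embedding with $R' = R \cup (J_0 \cap I)$, and the facets of $P^{\delta,\epsilon}$ inside this ambient affine subspace are indexed by the $i \in I_0 \setminus I$ with $(\delta,\epsilon) \in \omega_{I \cup \{i\}}$. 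Lemma~\ref{lem:conditiondivisorielleMori} will then substitute $\omega_{I \cup \{i\}}$ by $\Omega_{I \cup \{i\}}$, and the inclusion $P^{\delta,\epsilon} \subset \bigcap_{i \in I}\mathcal{H}_i^{\delta,\epsilon}$ will further equate $\Omega_{I \cup \{i\}}$ with $\Omega_i$. Restricting finally to those facets that are not contained in walls of the dominant chamber of $P'$, namely excluding the indices $i \in J_0 \setminus I$, should produce exactly $I_2^{\delta,\epsilon} \setminus (I_2^{\delta,\epsilon} \cap I)$.

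The most delicate step will be the bookkeeping in part~(\ref{Bdiv2}), where a color $\alpha \in J_0$ now plays two very different roles for $Y^{\delta,\epsilon}$: those in $I \cap J_0$ have been absorbed into the new parabolic $P'$ and no longer index walls of its dominant chamber, while those in $J_0 \setminus I$ persist as walls of the dominant chamber of $P'$, so the corresponding facets of $P^{\delta,\epsilon}$ (when present) contribute only $B$-stable, not $G$-stable, divisors of $Y^{\delta,\epsilon}$. Keeping this distinction straight is exactly what introduces the restriction $i \in I_0 \setminus J_0$ that is built into the final formula $I_2^{\delta,\epsilon} \setminus (I_2^{\delta,\epsilon} \cap I)$.
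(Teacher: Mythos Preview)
Your approach is essentially the paper's: both parts rest on the correspondence between $G$-stable divisors and facets of the pseudo-moment polytope not lying in a wall of the dominant chamber, and both invoke Lemma~\ref{lem:conditiondivisorielle} for part~(\ref{Bdiv1}) and Lemma~\ref{lem:conditiondivisorielleMori} for part~(\ref{Bdiv2}).

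The one point the paper handles more carefully, and which you elide, is the possibility that two distinct hyperplanes $\mathcal{H}_i^{\delta,\epsilon}$ and $\mathcal{H}_j^{\delta,\epsilon}$ cut out the \emph{same} facet of $P^{\delta,\epsilon}$. Your sentence ``the indices $i \in I_0 \setminus J_0$ such that $\mathcal{H}_i^{\delta,\epsilon}$ cuts out a facet of $P^{\delta,\epsilon}$, that is, to the set $I_1^{\delta,\epsilon}$'' is not literally correct: if $F_i^{\delta,\epsilon}=F_j^{\delta,\epsilon}$ for some $j\in J_0$, then $\mathcal{H}_i^{\delta,\epsilon}$ does cut a facet, yet $(\delta,\epsilon)\notin\omega_i$ and this facet is a color, not a $G$-stable divisor. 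The paper resolves this in part~(\ref{Bdiv1}) by observing that such a coincidence forces $A_i$ and $A_j$ collinear, hence $I=\{i,j\}$ with one index in $J_0$; and in part~(\ref{Bdiv2}) by noting that a coincidence $F_i^{\delta,\epsilon}=F_j^{\delta,\epsilon}$ with $i\neq j$ outside $I$ would produce a linear relation among the rows of $A$ independent of the one defining $I$, contradicting that $\Im A_I$ has codimension one. Once you insert this check (and the companion remark that two indices in $I_0\setminus J_0$ never give proportional rows, since the $x_i$ are primitive generators of distinct rays), your argument coincides with the paper's.
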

\begin{proof}
The set of $B$-stable prime divisors of $Y^{\delta,\epsilon}$ is the union of $G$-stable divisors and colors of the horospherical homogeneous space. The  $G$-stable prime divisors correspond bijectively to the facets $F_i^{\delta,\epsilon}$ with $i\in I_0\backslash J_0$ that are not equal to some $F_j^{\delta,\epsilon}$ with $j\in J_0$.

Assume that  $(\delta,\epsilon)\in \omega_{\emptyset}$.
Thus $Y^{\delta,\epsilon}$ is a $G/H$-embedding, colors are indexed by $S\backslash I$ or equivalently by $J_0$. If there are $i$ and $j\in I_0$ with $i\neq j$ and such that $F_i^{\delta_1,\epsilon_1}=F_j^{\delta_1,\epsilon_1}$ is a facet, then $A_i$ and $A_j$ are colinear, one of $i$ or $j$ is in $J_0$, and $I={i,j}$.
Lemma~\ref{lem:conditiondivisorielle} implies the claim.

Assume that  $(\delta,\epsilon)\in\Omega_\emptyset\backslash\omega_\emptyset$.
Then $Y^{\delta,\epsilon}$ is a $G/H'$-embedding with $H'\subsetneq H$ as explained in Remark~\ref{rem:ontheline}. In the proof of Lemma~\ref{lem:conditionMori}, we prove that $P^{\delta,\epsilon}$ generates the affine subspace $\{x\in M_\Qbb\,\mid\,A_Ix=D_I(\delta,\epsilon)\}$, so that $M'=\ker(A_I)\cap M$. Moreover, the colors of $G/H'$ are indexed by $J_0\backslash(J_0\cap I)$. If for some $i,j\in I_0$, $F_i^{\delta,\epsilon}=F_j^{\delta,\epsilon}$ is a facet, then $i,j\not\in I$, $A_i$ and $A_j$ are colinear modulo the vector space $N''_\Qbb$ generated by the lines of $A_I$. If $i\neq j$ it would give another relation to the one given by $I$, then $i=j$. Note also that for any $i\in I_0\backslash I$, $F_i^{\delta,\epsilon}= F_{I\cup\{i\}}^{\delta,\epsilon}$, and if it is not empty then $A_i\not\in N''_\Qbb$. Indeed, if $A_i\in N''_\Qbb$, we have another relation to the one given by $I$. 

Hence Lemma~\ref{lem:conditiondivisorielleMori} implies the claim.
\end{proof}

\begin{prop}\label{prop:morphXY}
Let $(\delta_0,\epsilon_0)\in U_2$.
Let $\ell\colon \Qbb\to \Qbb^2$ be the parametrisation of a rational affine line 
such that $\ell(0)=(\delta_0,\epsilon_0)$.
Let $\bar t>0$ be the minimum such that $\ell(\bar t)\in\Omega_{\emptyset}\setminus U_2$.
Assume  that $\ell(\bar t)=(\delta_1,\epsilon_1)\in U_1$.

Set $I$ minimal such that $(\delta_1,\epsilon_1)\in\omega_I$.

The morphism from $X^{\delta_0,\epsilon_0}$ to $Y^{\delta_1,\epsilon_1}$ is an extremal contraction and one of the following occurs.
\begin{enumerate}
\item If $I=I_+$ or $I_-$ then $\dim Y^{\delta_1,\epsilon_1}<\dim X^{\delta_0,\epsilon_0}$, $Y^{\delta_1,\epsilon_1}$ is $\Qbb$-factorial and $\rho(X)=\rho(Y)+1$. 
\item If $|I|\geq 2$ and $I_+$ or $I_-$ is $\{i\}$ with $i\in I_0\backslash J_0$ then  $Y^{\delta_1,\epsilon_1}$ is $\Qbb$-factorial and $X^{\delta_0,\epsilon_0}\to Y^{\delta_1,\epsilon_1}$ is an extremal divisorial contraction or an isomorphism.
\item In the other cases  $X^{\delta_0,\epsilon_0}\to Y^{\delta_1,\epsilon_1}$ is a small extremal contraction.
\end{enumerate}
\end{prop}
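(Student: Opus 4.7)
The plan is to split the argument according to whether $(\delta_1,\epsilon_1)\in\omega_\emptyset$, invoking Lemma~\ref{lem:conditionMori}, and then to read off the geometric nature of $X^{\delta_0,\epsilon_0}\to Y^{\delta_1,\epsilon_1}$ from the combinatorics of the minimal index set $I$ via Corollary~\ref{cor:Bdiv}. The morphism itself is constructed as the nef map $\phi_{D(\delta_1,\epsilon_1)}$ given by Lemma~\ref{lem:Gmorph} (its fibers are connected because the image is a single orbit closure in a projective space), so what remains is to identify its MMP-type, verify $\Qbb$-factoriality of $Y$ when claimed, and check the relative Picard number is one.

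By Lemma~\ref{lem:conditionMori}, $(\delta_1,\epsilon_1)\in\omega_\emptyset$ iff both $I_+$ and $I_-$ are non-empty. In Case~(1), $I=I_+$ or $I=I_-$, so $(\delta_1,\epsilon_1)\in\Omega_\emptyset\setminus\omega_\emptyset$; the minimality of $I$ forces $\Im A_I$ to have codimension one, so $P^{\delta_1,\epsilon_1}$ is contained in the affine hyperplane $\{x\in M_\Qbb\mid \sum_{i\in I}\lambda_i^I A_i x=\sum_{i\in I}\lambda_i^I D_i(\delta_1,\epsilon_1)\}$ and has dimension $n-1$. By Remark~\ref{rem:ontheline}, $Y^{\delta_1,\epsilon_1}$ is a $G/H'$-embedding of rank $n-1$, so $X\to Y$ is a fibration. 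The $\Qbb$-factoriality of $Y$ follows from Lemma~\ref{lem:Qf} applied in the quotient lattice $M/(M\cap\ker A_I^{\perp})$, and the equality $\rho(X^{\delta_0,\epsilon_0})=\rho(Y^{\delta_1,\epsilon_1})+1$ results from combining Corollary~\ref{cor:Bdiv}(2), Corollary~\ref{cor:pic} and the formula of Remark~\ref{rem:U0Qf}.

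In Case~(2), $(\delta_1,\epsilon_1)\in\omega_\emptyset$, so $P^{\delta_1,\epsilon_1}$ is of dimension $n$ and $X\to Y$ is birational. By Corollary~\ref{cor:Bdiv}(1), the $G$-stable prime divisors of $Y$ are indexed by $I_2^{\delta_1,\epsilon_1}\setminus\{i\}$, so either the divisor $X_i$ is contracted (extremal divisorial contraction) or it was not a divisor of $X^{\delta_0,\epsilon_0}$ in the first place (in which case $X\to Y$ is an isomorphism). $\Qbb$-factoriality of $Y$ is given by Lemma~\ref{lem:Qf}, and the Picard-number count via Remark~\ref{rem:U0Qf} gives $\rho(X)=\rho(Y)+1$. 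In Case~(3), either $|I_+|,|I_-|\geq 2$, or some singleton $I_\pm$ lies in $J_0$; in both situations Corollary~\ref{cor:Bdiv}(1) yields that no $G$-stable divisor of $X$ is lost on $Y$, and the same circle of ideas shows no color is contracted either, so the exceptional locus of $X\to Y$ has codimension at least $2$ and the contraction is small.

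The main difficulty will be the extremality assertion in Case~(3), since $Y^{\delta_1,\epsilon_1}$ is typically not $\Qbb$-factorial and so Remark~\ref{rem:U0Qf} does not apply to $Y$ directly. My plan is to exhibit a unique extremal class in the relative Mori cone of $X\to Y$ by constructing a curve in the fibers of $X\to Y$ whose class corresponds to the single linear relation $\sum_{i\in I}\lambda_i^I A_i=0$, using the minimality of $I$ together with the codimension-one property of $\Im A_I$ to rule out any second independent relation. A secondary technical point is the precise transfer of Lemma~\ref{lem:Qf} to the quotient lattice in Case~(1), where one must combine it with the generality of $B$ and $B'$ from Subsection~\ref{ssec:gen} to guarantee that the vertices of the lower-dimensional polytope remain simple.
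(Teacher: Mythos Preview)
Your overall three-case split via Lemma~\ref{lem:conditionMori} and Corollary~\ref{cor:Bdiv} matches the paper, but two points need correction.

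\medskip

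\textbf{Case~(1): the hyperplane and dimension are wrong.} The set $\{x\mid\sum_{i\in I}\lambda_i^I A_i x=\sum_{i\in I}\lambda_i^I D_i(\delta_1,\epsilon_1)\}$ is not a hyperplane: since $\sum_{i\in I}\lambda_i^I A_i=0$ and (by the computation inside the proof of Lemma~\ref{lem:conditionMori}) $\sum_{i\in I}\lambda_i^I D_i(\delta_1,\epsilon_1)=0$, this is all of $M_\Qbb$. The correct containment is $P^{\delta_1,\epsilon_1}\subseteq\{x\mid A_Ix=D_I(\delta_1,\epsilon_1)\}$, an affine subspace of dimension $n-\operatorname{rk}A_I=n-|I|+1$, not $n-1$ (and when $|I|=1$ with $A_i=0$ the polytope stays $n$-dimensional but a color is lost). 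Your reference to ``the quotient lattice $M/(M\cap\ker A_I^{\perp})$'' should be replaced by the sublattice $M'=\ker(A_I)\cap M$: the paper checks $\Qbb$-factoriality of $Y$ by applying Lemma~\ref{lem:Qf} to the matrix whose rows are the $A_j$, $j\in I_0\setminus I$, restricted to $\ker(A_I)$, and the surjectivity needed there comes from the fact that every $J$ with $(\delta_1,\epsilon_1)\in\omega_J$ contains $I$ with $\omega_J$ one-dimensional.

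\medskip

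\textbf{Case~(3): your curve construction is unnecessary.} The paper does not produce an explicit curve class. Instead it uses Lemma~\ref{lem:Qcart}: at every vertex $v$ of $P^{\delta_1,\epsilon_1}$ with $A_{I_v}$ non-surjective, the generality assumptions (Corollary~\ref{cor:segmentsgeneraux}) force $I\subseteq I_v$ and the unique relation among the rows of $A_{I_v}$ is $\sum_{i\in I}\lambda_i^I X_i=0$. Hence a $B$-stable divisor $D'$ on $Y^{\delta_1,\epsilon_1}$ is $\Qbb$-Cartier iff its coefficient vector $B'$ satisfies the single linear equation $\sum_{i\in I}\lambda_i^I B'_i=0$, so $\rho(Y)=\rho(X)-1$ and the contraction is extremal. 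This is both shorter and avoids the delicate business of identifying generators of the relative Mori cone.
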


\begin{proof}
\underline{Assume that $I=I_+$ or $I_-$.}
By Corollary~\ref{cor:Bdiv} we have $(\delta_1,\epsilon_1)\in\Omega_{\emptyset}\setminus\omega_\emptyset$ and by Remark \ref{rem:ontheline} we have $\dim Y^{\delta_1,\epsilon_1}<\dim X^{\delta_0,\epsilon_0}$.

By Lemma~\ref{lem:conditiondivisorielleMori}, the $B$-stable prime divisors of $Y^{\delta_1,\epsilon_1}$ are in bijection with $(I_2^{\delta_1,\epsilon_1}\sqcup J_0)\backslash I$. 
Let $F_J^{\delta_1,\epsilon_1}$ be a face of $P^{\delta_1,\epsilon_1}$, choose $J$ such that $(\delta_1,\epsilon_1)\in\omega_J$. 
Then $I\subseteq J$ and $\omega_J$ is one-dimensional, so that the restriction of $A_{J\backslash I}$ to $\ker(A_I)$ is surjective. 
This implies,   with Lemma~\ref{lem:Qf} applied to the matrix of lines of  $A_{I_0\backslash I}$ restricted to $\ker(A_I)$, that $Y^{\delta_1,\epsilon_1}$ is $\Qbb$-factorial. 
Then its Picard number is $|(I_2^{\delta_1,\epsilon_1}\sqcup J_0)\backslash I|-(n-|I|+1)=|I_2^{\delta_1,\epsilon_1}\sqcup J_0|-n-1$. 
But $I_2^{\delta_1,\epsilon_1}=I_2^{\delta_0,\epsilon_0}$ so that the relative Picard number of $X^{\delta_0,\epsilon_0}\longrightarrow Y^{\delta_1,\epsilon_1}$ is~1.

\medskip

\underline{Assume that $I_+$ or $I_-$ is $\{i\}$ with $i\in I_0\backslash J_0$ and $|I|\geq 2$.}
By Lemma \ref{lem:conditionMori} we have  $(\delta_1,\epsilon_1)\in\omega_\emptyset$.
By Remark \ref{rem:ontheline} we have $\dim Y^{\delta_1,\epsilon_1}=\dim X^{\delta_0,\epsilon_0}$.

By Corollary~\ref{cor:Bdiv}, the $B$-stable prime divisors of $Y^{\delta_1,\epsilon_1}$ are in bijection with 
$(I_2^{\delta_1,\epsilon_1}\backslash\{i\})\sqcup J_0=(I_2^{\delta_0,\epsilon_0}\backslash\{i\})\sqcup J_0$.
For the sake of shortness we denote by $K\setminus\{i\}$ the set $K\cap (I_0\setminus\{i\})$.

Let $F_J^{\delta_1,\epsilon_1}$ be a face of $P^{\delta_1,\epsilon_1}$, choose $J$ such that $(\delta_1,\epsilon_1)\in\omega_J$. 

If $A_J$ is not surjective then  $I\subseteq J$ and $A_{J\backslash\{i\}}$ is surjective. This implies,  with Lemma~\ref{lem:Qf}, 
that $Y^{\delta_1,\epsilon_1}$ is $\Qbb$-factorial. Then its Picard number is $|(I_2^{\delta_0,\epsilon_0}\backslash\{i\})\sqcup J_0|-n$. 
This is the Picard number of $X^{\delta_0,\epsilon_0}$ minus one if $i\in I_2^{\delta_0,\epsilon_0}$ and the Picard number of $X^{\delta_0,\epsilon_0}$ if not. 
In that latter case, since $P^{\delta_1,\epsilon_1}$ can be defined as well without the line $i$, it is an equivalent $G/H$-polytope to $P^{\delta_0,\epsilon_0}$, and 
then $X^{\delta_0,\epsilon_0}=Y^{\delta_1,\epsilon_1}$.

\medskip

\underline{In the other cases}, again y Lemma \ref{lem:conditionMori} and
Remark \ref{rem:ontheline} we have $\dim Y^{\delta_1,\epsilon_1}=\dim X^{\delta_0,\epsilon_0}$.

Corollary~\ref{cor:Bdiv} implies that the $B$-stable prime divisors of $Y^{\delta_1,\epsilon_1}$ are the same as the ones of $X^{\delta_0,\epsilon_0}$,
they are therefore in bijection with $I_2^{\delta_0,\epsilon_0}\sqcup J_0$. 

Let $F_J^{\delta_1,\epsilon_1}$ be a face of $P^{\delta_1,\epsilon_1}$, choose $J$ such that $(\delta_1,\epsilon_1)\in\omega_J$. If $A_J$ is not surjective, then $\omega_J$ is one-dimensional and $I\subseteq J$. Then a $B$-stable $\Qbb$-divisor of $Y^{\delta_1,\epsilon_1}$ is $\Qbb$-Cartier if and only if its coefficients satisfy the equation $\sum_{i\in I}\lambda_i^I X_i$. Also note that for $I=J$, $A_J$ is not surjective.

Then the relative Picard number of $X^{\delta_0,\epsilon_0}\longrightarrow Y^{\delta_1,\epsilon_1}$ is~1.
\end{proof}

 \begin{rem}
 
 If  $I^+=I$ or $I^-=I$, for every $i\in I$ the polytope $P^{\delta,\epsilon}$ is contained in $\mathcal{H}_i^{\delta,\epsilon}=\{x\in\Qbb^n\,\mid\,A_ix=D_i^{\delta,\epsilon}\}$. 
  There are two possible cases for $I$.
  Either $I=\{i\}$ and $A_i=0$, so that $P^{\delta,\epsilon}$ has dimension~$n$, the associated horospherical variety $X$ has the same rank as $G/H$, 
  the same lattice $M$ but its open homogeneous space has one color less ($\alpha_i$); or $A_i\neq 0$ for any $i\in I$ (and $|I|\geq 2$), so that  $P^{\delta,\epsilon}$ is of maximal dimension an 
  affine subspace directed by $\ker(A_I)$, then it has dimension $n-|I|+1$, and the associated horospherical variety $X$ has rank $n-|I|+1$, its lattice is $M\cap \ker A_I$ and the colors of its 
  open homogeneous space are the colors of $G/H$ whose index of line in not in $I$. \\
  
  If $I^+=\{i\}$ or $I^-=\{i\}$, then  the condition $A_ix\geq D_i(\delta,\epsilon)$ is superfluous in the definition of $P^{\delta,\epsilon}$.\\

 \end{rem}

 \subsubsection{If $\ell(\bar t)\in U_0$}
We assume in this paragraph that $\ell(\bar t)=(\delta_2,\epsilon_2)\in U_0$.
We want to study the morphisms from $X^{\delta_0,\epsilon_0}$ to $Z^{\delta_2,\epsilon_2}$. 

Let $L$ be such that $\{(\delta_2,\epsilon_2)\}=\omega_L$.
By Lemma~\ref{lem:inclusionclosure}, there is $I$ such that $\omega_I$ has dimension  one  and  $(\delta_2,\epsilon_2)\in \Omega_I$. 
Then all but a finite set of points of $\omega_I$ are in $\omega_I\cap U_1$.
Choose  $(\delta_1,\epsilon_1)\in \omega_I\cap U_1$ close to $(\delta_2,\epsilon_2)$.

By Lemma \ref{lem:Gmorph} there are $G$-equivariant morphisms from $X^{\delta_0,\epsilon_0}$ to $Y^{\delta_1,\epsilon_1}$ and  $Z^{\delta_2,\epsilon_2}$, and from  $Y^{\delta_1,\epsilon_1}$ to  $Z^{\delta_2,\epsilon_2}$. Moreover the morphism from  $X^{\delta_0,\epsilon_0}$ to $Z^{\delta_2,\epsilon_2}$ factorizes through $Y^{\delta_1,\epsilon_1}$. 
 
We suppose from now on that $\{(\delta_2,\epsilon_2)\}=\omega_L\subset\Omega_\emptyset\backslash\omega_\emptyset$. 

\begin{notc}\label{def:rel2}
Let $L$ be such that $\omega_L=\{(\bar\delta,\bar\epsilon)\}\subset\Omega_\emptyset\backslash\omega_\emptyset$. Then, for general $B$ and $B'$, the image of $A_L$ has codimension two. By Lemma~\ref{lem:inclusions},  we can suppose up to taking a subset of $L$ that for every $\ell\in L$ the image of $A_{L\setminus\{\ell\}}$ has codimension 1 in $\Qbb^{|L|}$.

We have two possible cases.
\begin{enumerate}
\item If $\omega_L$ is a vertex of $\Omega_\emptyset\backslash\omega_\emptyset$, by remark \ref{rem:2alpha} there exist $I$ and $J$ subsets of $L$ such that, for any $i\in I$ and any $j\in J$, $A_{I\backslash \{i\}}$ and $A_{J\backslash \{j\}}$ are surjective, and $\omega_I$, $\omega_J$ are segments of the Mori's polygonal chain. By Lemma~\ref{lem:conditionMori} we can fix two linearly independent equations
for the lines of $A_L$
$$
\begin{array}{lll}
(\mathcal R_I)&\sum_{i\in I}\lambda_i^I X_i=0, & \lambda_i^I>0\; \forall i\in I  \\
(\mathcal R_J)&\sum_{j\in J}\lambda_j^J X_j=0, & \lambda_j^J>0\; \forall j\in J
\end{array}
$$

\noindent We suppose that $\omega_I\subseteq \{\delta<\bar\delta\}$ and that 
$\omega_J\subseteq \{\delta>\bar\delta\}$.\\
We can moreover assume that $\sum_{i\in I}\lambda_i^IC_i=1$ and $\sum_{j\in J}\lambda_j^JC_j=1$.

\item If $\omega_L$ is not a vertex of $\Omega_\emptyset\backslash\omega_\emptyset$, then there is $I\subseteq L$ such that, for any $i\in I$ the matrix  $A_{I\backslash \{i\}}$ is surjective, and $\omega_I$ is a segment of $\Omega_\emptyset\backslash\omega_\emptyset$ containing $\omega_L$.
By Lemma~\ref{lem:conditionMori}, we can fix two linearly independent equations
for the lines of $A_L$
$$
\begin{array}{lll}
(\mathcal R_I)&\sum_{i\in I}\lambda_i^I X_i=0, & \lambda_i^I>0\; \forall i\in I  \\
(\mathcal R_L)&\sum_{j\in L}\lambda_j^L X_j=0, & \lambda_j^L\neq 0\; \forall j\in L\setminus I
\end{array}
$$

We verify that $\lambda_j^L\neq 0\; \forall j\in L\setminus I$. Indeed if the second equation has one zero coefficient $\lambda_j^L$ for some $j\in L\backslash I$, then $A_{L\backslash\{j\}}$
would also have an image of codimension~2, contradicting the minimality of $L$. 
Moreover, the coefficients with indexes in $L\backslash I$ in the relation $(\mathcal R_L)$ do not have all the same sign. 

Indeed, if not, we can suppose that that they are positive, and by adding to $(\mathcal R_L)$ a positive multiple of $(\mathcal R_I)$, 
we would obtain a second equation, linearly independent with $(\mathcal R_I)$, with positive coefficients, associated to some $J\subsetneq I$. 
By Lemma \ref{lem:conditionMori} the set $\omega_J$ would be contained in $\Omega_{\emptyset}\setminus\omega_{\emptyset}$,
contradicting the hypothesis on $\omega_L$, as it would be the vertex between the segments $\omega_I$ and $\omega_J$.

With the same argument we can also suppose that $\lambda_j^L\geq 0$ for any $j\in I$ but some are zero.

We set $(L\setminus I)_+=\{j\in L\setminus I\,\vert\;\lambda_j^L>0 \}$ and $(L\setminus I)_-=\{j\in L\setminus I\,\vert\;\lambda_j^L<0 \}$.

We can moreover assume that $\sum_{i\in I}\lambda_i^IC_i=1$ and either $\sum_{j\in L}\lambda_j^LC_j=1$, or $\sum_{j\in L}\lambda_j^LC_j=0$ and $\sum_{j\in L}\lambda_j^L (B_j'-B_j)>0$. 
Indeed if $\sum_{j\in L}\lambda_j^LC_j<0$ we take the opposite and add a positive multiple of the first equation and we obtain the wanted equation with $\sum_{j\in L}\lambda_j^LC_j>0$. And if $\sum_{j\in L}\lambda_j^LC_j=0$ and $\sum_{j\in L}\lambda_j^L (B_j'-b_j)>0$, we take the opposite and add a positive multiple of the first equation and we obtain $\sum_{i\in I}\lambda_i^IC_i>0$.

\end{enumerate}

\end{notc}

\begin{lem}\label{lem:diviseursZ1}
Let $(\delta,\epsilon)\in U_0\cap \Omega_\emptyset\backslash\omega_\emptyset$ and assume that $(\delta,\epsilon)$ is a vertex.
Let $L\subseteq I_0$ be a minimal subset such that $\omega_L=\{(\delta,\epsilon)\}$.
Let $i\in I_0\backslash L$. Then either $(\delta,\epsilon)\not\in\Omega_{L\cup \{i\}}$ or $(\delta,\epsilon)\in \omega_{L\cup \{i\}}$.
\end{lem}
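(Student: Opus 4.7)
The plan is as follows. Set $J := L \cup \{i\}$, assume that $(\delta,\epsilon) \in \Omega_{J}$, and show that $(\delta,\epsilon) \in \omega_{J}$. First, by the definition of $\Omega_{J}$ as the union of the $\omega_{J'}$ for $J' \supseteq J$, pick $J' \supseteq J$ with $(\delta,\epsilon) \in \omega_{J'}$. Since $L \subseteq J'$, any $y$ witnessing a point of $\Omega_{J'}$ automatically witnesses the same point in $\Omega_{L}$, so $\Omega_{J'} \subseteq \Omega_{L}$. The hypothesis $\omega_{L} = \{(\delta,\epsilon)\}$ combined with the generality of $B, B'$ and Lemma~\ref{lem:generalB} forces $\Im A_{L}$ to have codimension exactly $2$ in $\mathbb{Q}^{|L|}$ (codimension $0$ or $1$ would make $\omega_{L}$ of positive dimension, codimension $\geq 3$ would make it empty). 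Lemma~\ref{lem:closed} then gives $\Omega_{L} = \overline{\omega_{L}} = \{(\delta,\epsilon)\}$, so $\omega_{J'} \subseteq \{(\delta,\epsilon)\}$, and since $(\delta,\epsilon) \in \omega_{J'}$ by choice, we conclude $\omega_{J'} = \{(\delta,\epsilon)\}$.

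Next, since $\omega_{J'}$ is a singleton, Lemma~\ref{lem:generalB} forces the codimension of $\Im A_{J'}$ to be exactly $2$ as well. Along the inclusion chain $L \subseteq J \subseteq J'$, the codimension of the image is non-decreasing---adding a row either leaves the rank unchanged, bumping the codimension by $1$, or increases the rank by $1$, leaving the codimension unchanged---and both endpoints have codimension $2$, so the codimension of $\Im A_{J}$ is also $2$. Concretely this means $A_{i}$ is not in the row span of $A_{L}$, because the alternative would jump the codimension from $2$ to $3$ and contradict the sandwich.

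Finally, apply Lemma~\ref{lem:inclusions}(2) to $J \subseteq J'$, which have the same image-codimension and satisfy $\omega_{J'} \neq \emptyset$, to conclude that $\omega_{J}$ is non-empty. Lemma~\ref{lem:generalB} then identifies $\omega_{J}$ as a single point, and Lemma~\ref{lem:closed} gives $\Omega_{J} = \overline{\omega_{J}} = \omega_{J}$. Together with the original assumption $(\delta,\epsilon) \in \Omega_{J}$ this yields $(\delta,\epsilon) \in \omega_{J}$, as required. The main conceptual step is the first one: observing that since $L \subseteq J'$ and $\Omega_{L}$ already collapses to the single point $\{(\delta,\epsilon)\}$, any $J' \supseteq J$ witnessing $(\delta,\epsilon) \in \Omega_{J}$ is automatically forced to have $\omega_{J'} = \{(\delta,\epsilon)\}$, which pins down the codimensions and lets the inclusion lemmas do the rest.
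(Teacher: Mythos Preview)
Your proof is correct and follows essentially the same approach as the paper's, just with more detail. The paper's proof compresses your steps 3--12 into a single invocation of Lemma~\ref{lem:inclusions}, writing only ``By Lemma~\ref{lem:inclusions}, we have $\omega_{L\cup\{i\}}=\omega_L=\omega_J$''; your version makes explicit the ingredients hidden in that sentence, namely the collapse $\Omega_L=\{(\delta,\epsilon)\}$ via Lemma~\ref{lem:closed}, the codimension sandwich forced by Lemma~\ref{lem:generalB}, and the appeal to Lemma~\ref{lem:inclusions}(2) to get $\omega_{L\cup\{i\}}\neq\emptyset$.
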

 
 \begin{proof}
If $(\delta,\epsilon)\in \Omega_{L\cup \{i\}}\backslash \omega_{L\cup \{i\}}$, there exists $J\subseteq I_0$ containing $i$ and $L$ such that $(\delta,\epsilon)\in \omega_J$. By Lemma~\ref{lem:inclusions}, we have $\omega_{L\cup\{i\}}=\omega_L=\omega_J$. In particular $(\delta,\epsilon)\in \omega_{L\cup \{i\}}$.

 \end{proof}
 
\begin{lem}\label{lem:diviseursZ2}
Let $(\delta,\epsilon)\in U_0\cap \Omega_\emptyset\backslash\omega_\emptyset$ and assume that $(\delta,\epsilon)$ is not a vertex of $\Omega_\emptyset\backslash\omega_\emptyset$.
Let $L\subseteq I_0$ be a minimal subset such that $\omega_L=\{(\delta,\epsilon)\}$.
 Let $I$ be a minimal subset such that $\omega_L\subseteq\omega_I$ and $\dim \omega_I=1$.
Let $i\in I_0\backslash I$. If $(\delta,\epsilon)\in\Omega_{I\cup \{i\}}\backslash \omega_{I\cup \{i\}}$ then $i\in L\backslash I$ and either
$L\setminus I_+=\{i \}$ or $L\setminus I_-=\{i \}$.
In particular, there are at most $2$ indices $i$ such that $(\delta,\epsilon)\in\Omega_{I\cup \{i\}}\backslash \omega_{I\cup \{i\}}$. If there are $2$ such indices, then $|L|=|I|+2$ and the restriction of $A_{L\backslash I}$ to $\ker(A_I)$ consists of twice the same line if $L\backslash I\subseteq I_0\backslash J_0$. 
\end{lem}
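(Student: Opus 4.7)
The strategy is a perturbation argument: the hypothesis $(\delta,\epsilon)\in\Omega_{I\cup\{i\}}\setminus\omega_{I\cup\{i\}}$ provides an $I^+\supsetneq I\cup\{i\}$ with $(\delta,\epsilon)\in\omega_{I^+}$, and one shows that perturbing a witness of $\omega_{I^+}$ in $\ker A_{I\cup\{i\}}$ always produces a witness of $\omega_{I\cup\{i\}}$ \emph{unless} the sign pattern of $\mathcal{R}_L$ on $L\setminus(I\cup\{i\})$ is uniform. This single sign condition will simultaneously pin down $i\in L\setminus I$ and the alternative $(L\setminus I)_+=\{i\}$ or $(L\setminus I)_-=\{i\}$. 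I would first dispose of the subcase $\dim\omega_{I^+}=1$: then $A_{I^+}$ has codimension one, forcing $A_i\notin\operatorname{rowspan}A_I$ and $\codim A_{I\cup\{i\}}=1$; a rank count shows that $A_{I^+\setminus(I\cup\{i\})}|_{\ker A_{I\cup\{i\}}}$ is surjective, so any $v\in\ker A_{I\cup\{i\}}$ with $A_jv>0$ on $I^+\setminus(I\cup\{i\})$ produces, via $y=x^++tv$ for small $t>0$, a witness of $\omega_{I\cup\{i\}}$---a contradiction.

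Hence $\dim\omega_{I^+}=0$ and $\omega_{I^+}=\{(\delta,\epsilon)\}$; Corollary~\ref{cor:pointsgeneraux} together with the minimality of $L$ gives $L\subseteq I^+$. A codimension count rules out $A_i\in\operatorname{rowspan}A_L$: otherwise $A_{L\cup\{i\}}$ would carry a third independent relation $X_i-\sum c_jX_j=0$ beyond $\mathcal{R}_I$ and $\mathcal{R}_L$, so $\codim\geq 3$, and Lemma~\ref{lem:generalB} would force $\omega_J=\emptyset$ for every $J\supseteq L\cup\{i\}$, making $\Omega_{L\cup\{i\}}=\emptyset$ and contradicting $L\cup\{i\}\subseteq I^+$. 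Thus $\codim A_{I\cup\{i\}}=1$ again. With $x^+$ a witness of $\omega_{I^+}$ and $K:=I^+\setminus(I\cup\{i\})$, the identity $\operatorname{rank}A_{I^+}-\operatorname{rank}A_{I\cup\{i\}}=|K|-1$ shows that the image of $A_K|_{\ker A_{I\cup\{i\}}}$ is the codimension-one hyperplane
\[
\sum_{j\in L\setminus(I\cup\{i\})}\lambda^L_j\, w_j=0,
\]
obtained by reducing $\mathcal{R}_L$ modulo $\operatorname{rowspan}A_{I\cup\{i\}}$. A vector $v\in\ker A_{I\cup\{i\}}$ with $A_jv>0$ on $K$ therefore exists precisely when these coefficients have mixed signs; if so, $y=x^++tv$ with $t>0$ small witnesses $(\delta,\epsilon)\in\omega_{I\cup\{i\}}$, a contradiction.

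So the hypothesis forces the coefficients $\{\lambda^L_j\}_{j\in L\setminus(I\cup\{i\})}$ to be of uniform sign. If $i\notin L$, then $L\setminus(I\cup\{i\})=L\setminus I$, whose coefficients are mixed by Notation~\ref{def:rel2} case~2---contradiction---so $i\in L\setminus I$. Writing $i\in(L\setminus I)_\varepsilon$ for $\varepsilon\in\{+,-\}$, the uniformity condition on $(L\setminus I)\setminus\{i\}$ combined with $(L\setminus I)_{-\varepsilon}\neq\emptyset$ (also from Notation~\ref{def:rel2}) forces $(L\setminus I)_\varepsilon=\{i\}$, proving the main claim. The counting is then automatic: $(L\setminus I)_+$ and $(L\setminus I)_-$ each equal $\{i\}$ for at most one index, giving at most two admissible $i$; if there are exactly two indices $i_+,i_-$, then $L\setminus I=\{i_+,i_-\}$ and $|L|=|I|+2$. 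Restricting $\mathcal{R}_L$ to $\ker A_I$ kills the $I$-terms and yields $\lambda^L_{i_+}A_{i_+}|_{\ker A_I}+\lambda^L_{i_-}A_{i_-}|_{\ker A_I}=0$ with $\lambda^L_{i_+}>0>\lambda^L_{i_-}$, so the two restrictions are positive multiples of each other; when $i_+,i_-\in I_0\setminus J_0$, both are primitive lattice vectors and must coincide. The main obstacle is the rank bookkeeping that identifies the sole induced relation on $\ker A_{I\cup\{i\}}$ as coming precisely from $\mathcal{R}_L$, and here the generality assumptions (notably Lemma~\ref{lem:generalB}) are essential to exclude any additional relations.
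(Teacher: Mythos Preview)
Your approach is essentially the paper's: produce a superset $I^+$ (the paper's $J$) with $(\delta,\epsilon)\in\omega_{I^+}$, eliminate the one-dimensional case, and then perturb a witness inside $\ker A_{I\cup\{i\}}$ to force the sign constraint on the coefficients $\lambda^L_j$. The paper first proves $i\in L$ and then the uniform-sign condition; you reverse this, deducing $i\in L$ \emph{from} the sign condition, which is a legitimate and arguably tidier reorganisation.

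There is, however, one genuine slip. You write that a codimension count ``rules out $A_i\in\operatorname{rowspan}A_L$'' and conclude $\codim A_{I\cup\{i\}}=1$. Your argument (that $A_i\in\operatorname{rowspan}A_L$ would give a third independent relation on $A_{L\cup\{i\}}$, forcing $\codim\geq 3$ and hence $\Omega_{L\cup\{i\}}=\emptyset$) is valid only when $i\notin L$: if $i\in L$---which is precisely what you go on to prove---then $A_i$ is a row of $A_L$ and trivially lies in its rowspan, while $L\cup\{i\}=L$ carries no extra relation. So as stated, the claim $A_i\notin\operatorname{rowspan}A_L$ is false in the case that actually matters, and your derivation of $\codim A_{I\cup\{i\}}=1$ is incomplete.

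The fix is easy and the conclusion survives. Split into cases. If $i\notin L$, your argument stands and yields $\codim A_{I\cup\{i\}}=1$. If $i\in L$, then $I\cup\{i\}\subsetneq L$ (equality would give $(\delta,\epsilon)\in\omega_L=\omega_{I\cup\{i\}}$, contrary to hypothesis); minimality of $L$ forces $\codim A_{L\setminus\{\ell\}}\leq 1$ for every $\ell\in L$, and since $I\cup\{i\}$ is contained in some $L\setminus\{\ell\}$ while already $\codim A_I=1$, one gets $\codim A_{I\cup\{i\}}=1$ here too. With this case split inserted, your identification of the induced relation on $\ker A_{I\cup\{i\}}$ as $\sum_{j\in L\setminus(I\cup\{i\})}\lambda^L_j w_j=0$ is correct in both cases, and the remainder of your proof goes through.
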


\begin{proof}

The proof is  similar to the proof of Lemma~\ref{lem:conditiondivisorielle}.

  If $(\delta,\epsilon)\in \Omega_{I\cup \{i\}}\backslash \omega_{I\cup \{i\}}$, then there exists $J\subseteq I_0$ containing $i$ and $I$ such that $(\delta,\epsilon)\in \omega_J$
  and $\omega_J$ is one or zero-dimensional. The set $\omega_J$ cannot have dimension one, otherwise Lemma~\ref{lem:inclusions} would imply 
  that $(\delta,\epsilon)\in \omega_J\subseteq\omega_{I\cup \{i\}}$. Hence, $\omega_J=\omega_L$ and $J=L\cup\{i\}$.

 We prove that  $i\in L$. Assume that it is not the case.
 Since the coefficients $\lambda^L_j$
 do not have the same sign for $j\in L\backslash I$, there exist positive $\mu_j^L$ such that $\sum_{j\in  L\backslash I}\lambda_j^L\mu_j^L=0$. 
 By Remark \ref{rem:surjminusj} there exists $y\in\ker(A_I)$ such that $A_jy=\mu_j^L>0$ for any $j\in  L\backslash I$ and  so that $A_iy=0$. 
 Since $(\delta,\epsilon)\in \omega_{L\cup\{i\}}$, there is
 $x\in M_\Qbb$ is such that $A_{L\cup\{i\}}x=D_{L\cup\{i\}}(\delta,\epsilon)$ 
and  $A_{\overline{L\cup\{i\}}}x>D_{\overline{L\cup\{i\}}}(\delta,\epsilon)$.
Thus for $t$ small, we have $A_{I\cup\{i\}}(x+ty)=D_{I\cup\{i\}}(\delta,\epsilon)$
and $A_{L\setminus I}(x+ty)>D_{L\setminus I}(\delta,\epsilon)$
proving that $(\delta,\epsilon)\in\omega_{I\cup \{i\}}$, which is a contradiction.
Thus $i\in L$.
 
We prove now that all the $\lambda_j^L$ for $j\in L\backslash(I\cup \{i\})$ have the same sign. Again, by contradiction, if the $\lambda_j^L$ for $j\in L\backslash(I\cup \{i\})$ do not have the same sign, by Remark \ref{rem:surjminusj} we can find $y\in\ker(A_I)$ such that $A_jy>0$ for any $j\in L\backslash(I\cup \{i\})$ and $A_iy=0$. Let $x\in\Qbb^n$ such that $A_Lx=D_L(\delta,\epsilon)$ and $A_{\bar{L}}x>D_{\bar{L}}(\delta,\epsilon)$. Then for any $t>0$ small enough $A_{I\cup\{i\}}(x+ty)=D_{I\cup\{i\}}(\delta,\epsilon)$ and  $A_{\overline{I\cup\{i\}}}(x+tx')>D_{\overline{I\cup\{i\}}}(\delta,\epsilon)$.  This is a contradiction, as $(\delta,\epsilon)\not\in\omega_{I\cup\{i\}}$. 

Finally, as the coefficients of $(\mathcal R_L)$ do not have all the same sign, $\lambda_i^L$ has opposite sign than $\lambda_j^L$ for $j\in L\backslash(I\cup \{i\})$.


The last statment is not difficult.
 
 \end{proof}

\begin{cor}\label{cor:Bdiv2}
Let $(\delta,\epsilon)\in U_0\cap\partial\Omega_{\emptyset}$.
Let $L$ be minimal such that $\omega_L=\{(\delta,\epsilon)\}$.
\begin{enumerate}
\item If $\omega_L$ is a vertex, the set of  $G$-stable prime divisors of  $Z^{\delta,\epsilon}$
is in bijection  with $I_2^{\delta,\epsilon}\backslash (I_2^{\delta,\epsilon}\cap L)$;
\item if $\omega_L$ is not a vertex and there is $i\in I_0\setminus J_0$ such that either $L\backslash I_+$ or $L\backslash I_-$ is $\{i\}$, the set of  $G$-stable prime divisors of  $Z^{\delta,\epsilon}$
is in bijection  with $I_2^{\delta,\epsilon}\setminus\{i\}$;
\item otherwise, the set of  $G$-stable prime divisors of  $Z^{\delta,\epsilon}$
is in bijection  with $I_2^{\delta,\epsilon}$.
\end{enumerate}
\end{cor}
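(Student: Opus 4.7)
\medskip

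\noindent\textbf{Proof plan.} The plan is to mirror the structure of the proof of Corollary \ref{cor:Bdiv}, replacing the inputs from Lemmas \ref{lem:conditiondivisorielle} and \ref{lem:conditiondivisorielleMori} with the $U_0$-analogues Lemmas \ref{lem:diviseursZ1} and \ref{lem:diviseursZ2}, and using Notation/Construction \ref{def:rel2} to organize the vertex and non-vertex sub-cases. Throughout we will exploit that $B$, $B'$ are general in the sense of Lemmas~\ref{lem:generalB}, \ref{lem:segmentsgeneraux}, \ref{lem:pointsgeneraux}, \ref{lem:pointsegmentgeneraux} and \ref{lem:3int}.

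\medskip

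First I would set up the general picture. Since $(\delta,\epsilon)\in U_0\cap\partial\Omega_\emptyset\subseteq\Omega_\emptyset\setminus\omega_\emptyset$, Remark~\ref{rem:ontheline} identifies $Z^{\delta,\epsilon}$ as a $G/H'$-embedding for some $H\subsetneq H'$, and a variant of the argument proving Lemma~\ref{lem:conditionMori} shows that the affine span of $P^{\delta,\epsilon}$ is exactly $\{x\in M_\Qbb\mid A_Lx=D_L(\delta,\epsilon)\}$. The $B$-stable prime divisors of $Z^{\delta,\epsilon}$ are in bijection with the facets of $P^{\delta,\epsilon}$ in this affine span, and the $G$-stable ones are indexed by those $i\in I_0\setminus J_0$ for which $F_i^{\delta,\epsilon}$ is a proper facet which does not coincide with some $F_j^{\delta,\epsilon}$ for $j\in J_0$. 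In particular $i\in L$ is excluded because in that case $F_i^{\delta,\epsilon}=P^{\delta,\epsilon}$.

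\medskip

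For case~(1), with $\omega_L$ a vertex, I would apply Lemma~\ref{lem:diviseursZ1} directly: for every $i\in I_0\setminus L$ either $F_i^{\delta,\epsilon}=\emptyset$, or $(\delta,\epsilon)\in\omega_{L\cup\{i\}}$ and $F_i^{\delta,\epsilon}$ is a facet. Using the two independent relations $\mathcal R_I$ and $\mathcal R_J$ provided by Notation/Construction \ref{def:rel2}(1), one checks that $(\delta,\epsilon)\in\omega_{L\cup\{i\}}$ is equivalent to $i\in I_2^{\delta,\epsilon}$: one direction is immediate, and for the other, any witness of $i\in \Omega_i$ can be adjusted within $\ker A_L$ to satisfy all the $A_L$-equalities while keeping strict the remaining ones. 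Finally, two such facets $F_i^{\delta,\epsilon}$ and $F_j^{\delta,\epsilon}$ with $i\neq j$ and $i,j\notin L$ would force $A_i$ and $A_j$ to be colinear modulo the span of $A_L$, producing a third linear relation on $A_{L\cup\{i,j\}}$; but then the image of $A_{L\cup\{i,j\}}$ would have codimension~$3$, contradicting Lemma~\ref{lem:generalB}. This gives the bijection with $I_2^{\delta,\epsilon}\setminus(I_2^{\delta,\epsilon}\cap L)$.

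\medskip

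For cases~(2) and~(3), $(\delta,\epsilon)$ lies in the interior of a one-dimensional segment $\omega_I\subseteq\partial\Omega_\emptyset$ with $I\subseteq L$ as in Notation/Construction \ref{def:rel2}(2). The arguments of Corollary~\ref{cor:Bdiv}\eqref{Bdiv2} applied to the segment $\omega_I$ already show that the candidate indices for $G$-stable divisors are $i\in I_2^{\delta,\epsilon}\setminus I$. The new input is Lemma~\ref{lem:diviseursZ2}, which identifies exactly the extra indices $i\in L\setminus I$ for which $(\delta,\epsilon)\in\Omega_{I\cup\{i\}}\setminus\omega_{I\cup\{i\}}$: these are the $i$ singled out by $L\setminus I_+=\{i\}$ or $L\setminus I_-=\{i\}$. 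Such an $i$ contributes a redundant inequality (not a facet of $P^{\delta,\epsilon}$), and in case~(2) must be removed to produce the bijection with $I_2^{\delta,\epsilon}\setminus\{i\}$. In case~(3), either no such index lies in $I_0\setminus J_0$, or the second alternative of Lemma~\ref{lem:diviseursZ2} occurs: two candidate indices exist, and by its last assertion the restriction of $A_{L\setminus I}$ to $\ker A_I$ consists of two copies of the same line, so the two candidates identify the same would-be facet and no removal from $I_2^{\delta,\epsilon}$ is needed. I expect this last point -- the collapse of the two candidate divisors in case~(3) -- to be the subtlest step, as it is precisely where the generality of $B,B'$ and the minimality assertions of Notation/Construction \ref{def:rel2}(2) must be combined delicately.
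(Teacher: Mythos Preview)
Your approach is essentially the same as the paper's: both proofs identify $Z^{\delta,\epsilon}$ as a $G/H'$-embedding via Remark~\ref{rem:ontheline}, read off $G$-stable divisors as facets indexed by $I_0\setminus J_0$ not coinciding with color facets, and invoke Lemmas~\ref{lem:diviseursZ1} and~\ref{lem:diviseursZ2} together with Construction~\ref{def:rel2} to control which facets exist and which coincide.

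Two points deserve care. First, your opening claim that the affine span of $P^{\delta,\epsilon}$ equals $\{A_Lx=D_L(\delta,\epsilon)\}$ is only correct in the vertex case; when $\omega_L$ is not a vertex the span is the larger space $\{A_Ix=D_I(\delta,\epsilon)\}$ (this is what the paper establishes, and it is exactly the distinction governing which indices lie ``inside'' the polytope versus cut genuine facets). You implicitly correct this in your treatment of cases~(2)--(3), but the setup as written is misleading.

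Second, your case~(3) analysis misallocates a scenario. If two indices $i_1,i_2\in I_0\setminus J_0$ satisfy $(L\setminus I)_+=\{i_1\}$ and $(L\setminus I)_-=\{i_2\}$, then the hypothesis of case~(2) is met (take $i=i_1$), so this situation belongs to case~(2), not case~(3). In case~(2) the two facets $F_{i_1}$ and $F_{i_2}$ coincide---their restrictions to $\ker A_I$ are positive multiples of one another by the relation $(\mathcal R_L)$---so removing either one from $I_2^{\delta,\epsilon}$ yields the correct bijection; this is where the last assertion of Lemma~\ref{lem:diviseursZ2} is actually used. In case~(3), by contrast, Lemma~\ref{lem:diviseursZ2} guarantees there is no index in $I_0\setminus J_0$ with $(\delta,\epsilon)\in\Omega_{I\cup\{i\}}\setminus\omega_{I\cup\{i\}}$, so every $i\in I_2^{\delta,\epsilon}$ gives a distinct facet and no removal is needed. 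The argument there is shorter than you suggest, not subtler.
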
 
 
\begin{proof}
By  Remark~\ref{rem:ontheline}, the variety $Z^{\delta,\epsilon}$ is a $G/H'$-embedding with $H'\subsetneq H$. We let $M'$ be the lattice of characters vanishing on $H$ and $N'$ its dual. 
Note that $N'_\Qbb$ is the quotient $N_\Qbb/N''_\Qbb$ where $N''_\Qbb$ is the subspace generated by the lines of $A_L$.

  The  $G$-stable prime divisors of $Z^{\delta,\epsilon}$ correspond bijectively to the facets $F_i^{\delta,\epsilon}$ with $i\in I_0\backslash J_0$ that are not equal to some $F_j^{\delta,\epsilon}$ with $j\in J_0$.
  
  Assume that $\omega_L$ is a vertex.
   The polytope $P^{\delta,\epsilon}$ generates the affine subspace $\{x\in M_\Qbb\,\mid\,A_Lx=D_L(\delta,\epsilon)\}$, so that $M'=\ker(A_L)\cap M$. Moreover, the colors of $G/H'$ are indexed by $J_0\backslash(J_0\cap L)$. If for some $i,j\in I_0$, $F_i^{\delta,\epsilon}=F_j^{\delta,\epsilon}$ is a facet, then, $i,j\not\in L$, $A_i$ and $A_j$ are colinear modulo $N''_\Qbb$. If $i\neq j$ we would have another relation other than the two given by $L$. Thus we have $i=j$.
 
  Note also that for any $i\in I_0\backslash L$, we have $F_i^{\delta,\epsilon}= F_{L\cup\{i\}}^{\delta,\epsilon}$. If the latter is non empty and if $A_i\in N''_\Qbb$ we would have another relation other than the two given by $L$ so that $A_i\not\in N''_\Qbb$.
   Hence by Lemma~\ref{lem:diviseursZ1} the $G$-stable prime divisors of the variety $Z^{\delta,\epsilon}$ are in bijection with  $$\{i\in I_0\backslash L\,\mid\,(\delta,\epsilon)\in\omega_{L\cup\{i\}}\}=\{i\in I_0\backslash L\,\mid\,(\delta,\epsilon)\in\Omega_{L\cup\{i\}}\}=I_2^{\delta,\epsilon}\backslash (I_2^{\delta,\epsilon}\cap L).$$ 
  
  Assume that $\omega_L$ is not a vertex.
   The polytope $P^{\delta,\epsilon}$ generates the affine subspace $\{x\in M_\Qbb\,\mid\,A_Ix=D_I(\delta,\epsilon)\}$, so that $M'=\ker(A_I)\cap M$. Moreover, the colors of $G/H'$ are indexed by $J_0\backslash(J_0\cap I)$. If for some $i\neq j\in I_0$, $F_i^{\delta,\epsilon}=F_j^{\delta,\epsilon}$ is a facet, then $A_i$ and $A_j$ are colinear modulo the vector space $N'_\Qbb$ generated by the lines of $A_I$, and then $L=I\cup \{i,j\}$. Hence, by Lemma~\ref{lem:diviseursZ2}, the $G$-stable prime divisors of the variety $Z^{\delta,\epsilon}$ are in bijection with  $$\{i\in I_0\backslash I\,\mid\,(\delta,\epsilon)\in\omega_{I\cup\{i\}}\}=I_2^{\delta,\epsilon}\backslash\{i\}$$ if $L\backslash I_+$ or $L\backslash I_-$ is $\{i\}$ with $i\in I_0\backslash J_0$, and else with $$\{i\in I_0\backslash I\,\mid\,(\delta,\epsilon)\in\omega_{I\cup\{i\}}\}=I_2^{\delta,\epsilon}.$$
\end{proof}

 \begin{prop}\label{prop:morphXZ}
 Let $(\delta_0,\epsilon_0)\in U_2$.
Let $\ell\colon \Qbb\to \Qbb^2$ be the parametrisation of a rational affine line 
such that $\ell(0)=(\delta_0,\epsilon_0)$.
Let $\bar t>0$ be the minimum such that $\ell(\bar t)\in\Omega_{\emptyset}\setminus U_2$.
Assume  that $\ell(\bar t)=(\delta_2,\epsilon_2)\in U_0\cap\Omega_\emptyset\backslash\omega_\emptyset$.

Let $L$ be a minimal set such that $\{(\delta_2,\epsilon_2)\}=\omega_L$.
The morphism from $X^{\delta_0,\epsilon_0}$ to $Z^{\delta_2,\epsilon_2}$ has relative Picard number at most 2.
\end{prop}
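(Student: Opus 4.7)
The plan is to reduce Proposition \ref{prop:morphXZ} to Proposition \ref{prop:morphXY} by factoring the morphism $\phi \colon X^{\delta_0, \epsilon_0} \to Z^{\delta_2, \epsilon_2}$ through an intermediate horospherical variety $Y^{\delta_1, \epsilon_1}$. As already noted in the paragraph preceding the statement, Lemma \ref{lem:inclusionclosure} produces a subset $I \subseteq I_0$ with $\omega_I$ one-dimensional and $(\delta_2, \epsilon_2)$ in its closure, and all but finitely many points of $\omega_I$ lie in $U_1$. I would choose $(\delta_1, \epsilon_1) \in \omega_I \cap U_1$ close enough to $(\delta_2, \epsilon_2)$ so that Lemma \ref{lem:Gmorph} yields $G$-equivariant morphisms $\phi_1 \colon X^{\delta_0, \epsilon_0} \to Y^{\delta_1, \epsilon_1}$ and $\phi_2 \colon Y^{\delta_1, \epsilon_1} \to Z^{\delta_2, \epsilon_2}$ with $\phi = \phi_2 \circ \phi_1$.

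By additivity of the relative Picard number along a composition of proper contractions with connected fibres, we have $\rho(X^{\delta_0,\epsilon_0}/Z^{\delta_2,\epsilon_2}) = \rho(X^{\delta_0,\epsilon_0}/Y^{\delta_1,\epsilon_1}) + \rho(Y^{\delta_1,\epsilon_1}/Z^{\delta_2,\epsilon_2})$. Proposition \ref{prop:morphXY} applied to the pair $((\delta_0,\epsilon_0), (\delta_1,\epsilon_1))$ directly gives $\rho(X^{\delta_0,\epsilon_0}/Y^{\delta_1,\epsilon_1}) = 1$. It therefore suffices to prove $\rho(Y^{\delta_1,\epsilon_1}/Z^{\delta_2,\epsilon_2}) \leq 1$.

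For this second estimate, I would mirror the combinatorial analysis of Proposition \ref{prop:morphXY} applied now to $\phi_2$. The transition from $P^{\delta_1,\epsilon_1}$ (where only the relation $(\mathcal{R}_I)$ is active) to $P^{\delta_2,\epsilon_2}$ activates \emph{exactly one} additional linearly independent relation on the rows of $A_L$: namely $(\mathcal{R}_J)$ in Case~1 of Notation \ref{def:rel2}, or the second relation $(\mathcal{R}_L)$ in Case~2. Depending on the sign structure of this new relation modulo $(\mathcal{R}_I)$, the morphism $\phi_2$ falls into one of the three cases of Proposition \ref{prop:morphXY}: either a fibration lowering the rank of the character lattice, an extremal divisorial contraction, or a small extremal contraction. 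Using Corollaries \ref{cor:Bdiv}--\ref{cor:Bdiv2} to count the $B$-stable prime divisors of $Y^{\delta_1,\epsilon_1}$ and $Z^{\delta_2,\epsilon_2}$, and Lemma \ref{lem:Qcart} to describe $\Qbb$-Cartier divisors, a direct count parallel to that in the proof of Proposition \ref{prop:morphXY} yields $\rho(Y^{\delta_1,\epsilon_1}/Z^{\delta_2,\epsilon_2}) = 1$ in every sub-case.

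The hard part will be handling the Cartier condition on $Y^{\delta_1,\epsilon_1}$ when it fails to be $\Qbb$-factorial, which happens in the small contraction case of Proposition \ref{prop:morphXY}. In that situation the Picard rank of $Y^{\delta_1,\epsilon_1}$ is already given by subtracting from the Weil class group the dimension of the Cartier constraint coming from $(\mathcal{R}_I)$; passing from $Y^{\delta_1,\epsilon_1}$ to $Z^{\delta_2,\epsilon_2}$ imposes a second, independent Cartier constraint (associated with the newly activated relation) together with a further collapse of one facet or one lattice direction, and the bookkeeping must show that these compensate to give a consistent drop $\rho(Y^{\delta_1,\epsilon_1}) - \rho(Z^{\delta_2,\epsilon_2}) = 1$. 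Once this is done, the bound $\rho(X^{\delta_0,\epsilon_0}/Z^{\delta_2,\epsilon_2}) \leq 2$ follows.
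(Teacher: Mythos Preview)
Your approach is workable but takes a more circuitous route than the paper. The paper proves the bound \emph{directly}: it computes $\rho(Z^{\delta_2,\epsilon_2})$ in each of the two cases of Notation~\ref{def:rel2} (vertex or non-vertex) using Corollary~\ref{cor:Bdiv2} to count $B$-stable prime divisors of $Z^{\delta_2,\epsilon_2}$, Lemma~\ref{lem:Qf} to check $\Qbb$-factoriality (or Lemma~\ref{lem:Qcart} to describe the single Cartier condition when $Z$ is not $\Qbb$-factorial), and the formula $\rho=\#\{B\text{-stable primes}\}-\text{rank}$ from Corollary~\ref{cor:pic}. This yields $\rho(Z^{\delta_2,\epsilon_2})=|I_2^{\delta_2,\epsilon_2}\sqcup J_0|-n-2$, and comparing with $\rho(X^{\delta_0,\epsilon_0})=|I_2^{\delta_0,\epsilon_0}\sqcup J_0|-n$ together with $I_2^{\delta_2,\epsilon_2}\supseteq I_2^{\delta_0,\epsilon_0}$ gives the bound immediately.

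Your factoring strategy is not wrong, but it does not save work: the ``hard part'' you identify, namely the analysis of $\phi_2\colon Y^{\delta_1,\epsilon_1}\to Z^{\delta_2,\epsilon_2}$, requires exactly the same divisor count and Cartier analysis for $Z^{\delta_2,\epsilon_2}$ that the paper does directly, plus the additional bookkeeping of matching it against $Y^{\delta_1,\epsilon_1}$ (which may itself fail to be $\Qbb$-factorial). Two further cautions: first, to invoke Proposition~\ref{prop:morphXY} for $\phi_1$ you must check that $(\delta_1,\epsilon_1)$ actually lies on the boundary of the $U_2$-chamber containing $(\delta_0,\epsilon_0)$, which requires choosing it on the correct side of $\omega_I$ near $(\delta_2,\epsilon_2)$; second, Proposition~\ref{prop:morphXY} allows $\phi_1$ to be an isomorphism in case~(2), so strictly speaking you only get $\rho(X^{\delta_0,\epsilon_0}/Y^{\delta_1,\epsilon_1})\leq 1$, not equality. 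Neither point is fatal, but together they make the direct computation the cleaner option.
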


\begin{proof}

Suppose first that $\omega_L$ is a vertex. 

By Corollary~\ref{cor:Bdiv2}, the $B$-stable prime divisors of $Z^{\delta_2,\epsilon_2}$ are in bijection with $(I_2^{\delta_2,\epsilon_2}\sqcup J_0)\backslash L$. 
Let $F_J^{\delta_2,\epsilon_2}$ be a face of $P^{\delta_2,\epsilon_2}$, choose $J$ such that $(\delta_2,\epsilon_2)\in\omega_J$. 
Then $L \subseteq J$ and $\omega_J=\omega_L$, so that the restriction of $A_{J\backslash L}$ to $\ker(A_L)$ is surjective. 
This implies, by Lemma~\ref{lem:Qf} applied to the matrix of lines of  $A_{I_0\backslash L}$ restricted to $\ker(A_L)$, that $Z^{\delta_2,\epsilon_2}$ is $\Qbb$-factorial.
Then its Picard number is $$|(I_2^{\delta_2,\epsilon_2}\sqcup J_0)\backslash L|-(n-|L|+2)=|I_2^{\delta_2,\epsilon_2}\sqcup J_0|-n-2.$$ 

\noindent Since $I_2^{\delta_2,\epsilon_2}\supseteq I_2^{\delta_0,\epsilon_0}$,  the relative Picard number of $X^{\delta_0,\epsilon_0}\longrightarrow Z^{\delta_2,\epsilon_2}$ is at most~2.\\

Suppose now that $\omega_L$ is not a vertex. 
By Corollary~\ref{cor:Bdiv2},  the $B$-stable prime divisors of $Z^{\delta_2,\epsilon_2}$ are in bijection 
with a subset of $(I_2^{\delta_2,\epsilon_2}\sqcup J_0)\backslash I$ of cocardinality at most~1.
 Let $F_J^{\delta_2,\epsilon_2}$ be a face of $P^{\delta_2,\epsilon_2}$, choose $J$ such that $(\delta_2,\epsilon_2)\in\omega_J$. 
 Then $I \subseteq J$. Assume that $\omega_J$ is one-dimensional. Then the restriction of $A_{J\backslash I}$ to $\ker(A_L)$ is surjective. 
 
 Assume that $\omega_J$ has dimension zero. Then $\omega_J=\omega_L$, so that the restriction of $A_{J\backslash I}$ to $\ker(A_I)$ implies the same equation for every such $J$. 

 If  the cocardinality of a subset of $(I_2^{\delta_2,\epsilon_2}\sqcup J_0)\backslash I$ is one, then this equation does not occur for $B$-stable divisors and then $Z^{\delta_2,\epsilon_2}$ is $\Qbb$-factorial.  In any case,
  the  Picard number of $Z^{\delta_2,\epsilon_2}$ is $|(I_2^{\delta_2,\epsilon_2}\sqcup J_0)\backslash I|-(n-|I|+1)-1=|I_2^{\delta_2,\epsilon_2}\sqcup J_0|-n-2$. But $I_2^{\delta_2,\epsilon_2}$ is either $I_2^{\delta_0,\epsilon_0}$ or the union of $I_2^{\delta_0,\epsilon_0}$ with  $i$ as in Lemma~\ref{lem:diviseursZ2}, so that the relative Picard number of $X^{\delta_0,\epsilon_0}\longrightarrow Z^{\delta_2,\epsilon_2}$ is~1 or ~2.\\

\end{proof}

\begin{rem}\label{rem:morphXZ}
 If $\omega_L=\{(\delta_2,\epsilon_2)\}$ is a vertex and the relative Picard number of $X^{\delta_0,\epsilon_0}\longrightarrow Z^{\delta_2,\epsilon_2}$ is not 2, 
 then there exists $i\in I_2^{\delta_2,\epsilon_2}\backslash I_2^{\delta_0,\epsilon_0}$. In particular, $\omega_L$ is a vertex of $\Omega_i$ that does not contain $(\delta_0,\epsilon_0)$. Hence, there exists $K\subset I_0$, such that $\omega_K\subset\omega_\emptyset$ is one dimensional with one extremity equals to $\omega_L$.\\ 
 If $\omega_L$ is not a vertex, such $\omega_K$ exists by definition.
 \end{rem}

\subsection{Mori polygonal chain}

We start with the following definition.

\begin{defi}
The Mori polygonal chain of the family $P^{\delta,\epsilon}$ is

$$MPC:=(\Omega_\emptyset\backslash\omega_\emptyset)\cap\{(\delta,\epsilon)\in\Qbb^2,\mid\,0\leq \delta\leq 1\}.$$
\end{defi}

Throughout this section we suppose that $(0,0)$ and $(1,0)$ are in $\omega_\emptyset$,
that for every $i \in I_0\backslash J_0$ there exist negative $\epsilon_0$ and $\epsilon_1$ such that $(0,\epsilon_0)$ and $(1,\epsilon_1)$ are in $\omega_i$
and that $C\geq 0$. Then $\{(\delta,\epsilon)\in\Qbb^2\mid\,\epsilon\leq 0\mbox{ and }0\leq \delta\leq 1\}$ is a contained in $\omega_\emptyset$. 

Note that, with the above hypothesis, the Mori polygonal chain is contained in the half plane defined by $\epsilon>0$. It is polygonal because $\Omega_\emptyset\backslash\omega_\emptyset$ is a union of one-dimensional $\Omega_I$.

We say that $\omega_I$ is a {\it segment} of the Mori polygonal chain if 
 $\dim\omega_I=1$ and the intersection of $\omega_I$ with $MPC$ is not empty. 
 If $(\delta,\epsilon)\in MPC$, then either $P^{\delta,\epsilon}$ is not of maximal dimension or there exists $i$ such that $A_i=0$ and $D_i(\delta,\epsilon)=0$. 
 
By Lemma~\ref{lem:conditionMori}, if $\omega_I$ is a segment of MPC such that $A_{I\setminus\{j\}}$ is surjective for every $j\in I$, then there is an equation $\sum_{i\in I}\lambda_i^I X_i=0$ for the lines of $A_I$ such that 
$\lambda_i^I>0$ for every $i\in I$.

\begin{defi}
Let $K\subseteq I_0$ be such that $\dim \omega_K=1$ and the codimension of the image $A_K$ is 1. Let $\sum_{i\in K} \lambda_i^K X_i=0$ be an equation for the lines of $A_K$.
The slope of $\omega_K$ is $$sl_K=\left\{
\begin{array}{ll}
\frac{\sum_{i\in K}\lambda_i^K(B'_i-B_i)}{\sum_{i\in K}\lambda_i^K C_i}&\text{if }\sum_{i\in K}\lambda_i^K C_i\neq 0,\\
\infty&\text{if } \sum_{i\in K}\lambda_i^K C_i=0.
\end{array}
\right.
$$
\end{defi}

\begin{rem} The segment $\omega_K$ is included in the line defined by $$\epsilon\left(\sum_{i\in K}\lambda_i^K C_i\right)+\delta\left(\sum_{i\in K}\lambda_i^K(B'_i-B_i)\right)+\sum_{i\in K}\lambda_i^KB_i=0$$ and $sl_K$ is the slope of this line.
\end{rem}

\begin{prop}\label{pro:rays}
Let $(\delta_2,\epsilon_2)\in U_0\cap MPC$ and let $L$ be minimal such that 
$\omega_L=\{(\delta_2,\epsilon_2)\}$. We refer to the notation of Construction~\ref{def:rel2}.
Then there is a partition $L=K^0\sqcup\ldots\sqcup K^{r+1}$, with $r\geq 0$, such that
\begin{enumerate}
\item\label{rays1} for every $s\in\{0,\ldots,r+1\}$, for every $h,k\in K^s$
we have $\displaystyle{-\frac{\lambda_h^I}{\lambda_h^J}=-\frac{\lambda_k^I}{\lambda_k^J}}=:\nu_s\in[-\infty,0]$;
for every $s<s'\in\{0,\ldots,r+1\}$, we have $\nu_s>\nu_{s'}$.
\item\label{rays2} The set $K_s=L\setminus K^s$ is such that the codimension of the image of $A_{K_s}$ is 1 and for every $k\in K_s$ the map $A_{K_s\setminus \{k\}}$ is surjective.
If $K\subseteq L$ is such that the codimension of the image of $A_{K}$ is 1 and for every $k\in K$ the map $A_{K\setminus \{k\}}$ is surjective, then $K=K_s$ for some $s\in \{0,\ldots,r+1\}$.
 Moreover, if $(\delta_2,\epsilon_2)$ is a vertex then $K_0=I$ and $K_{r+1}=J$ and if it is not then $K_0=I$ and $K^0=L\backslash I$.
 
Note that, by Lemma~\ref{lem:inclusionclosure}, $\omega_{K_s}$ is not empty and with an extremity equals to $\omega_L$.
\item\label{rays3} The slope of $\omega_{K_s}$ is 
$$sl_{K_s}=\frac{\sum_{i\in I}\lambda_i^I(B'_i-B_i)+\nu_s\sum_{j\in J}\lambda_j^J(B'_j-B_j)}{1+d\nu_s}$$
with $d=1$ if $(\delta_2,\epsilon_2)$ is a vertex and $d=\sum_{j\in L}\lambda_j^LC_j$ otherwise.
\item\label{rays4}  Up to a rotation, the slopes decrease when $s$ increases (see picture below).
\end{enumerate}

\end{prop}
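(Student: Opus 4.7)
The plan is to exploit the fact that the $\Qbb$-linear relations among the lines of $A_L$ form a two-dimensional vector space, spanned by $(\mathcal{R}_I)$ and $(\mathcal{R}_J)$ in the vertex case (resp.\ $(\mathcal{R}_I)$ and $(\mathcal{R}_L)$ otherwise) by Construction~\ref{def:rel2}. Every relation is thus of the form $(\mathcal{R}_I)+\nu(\mathcal{R}_J)$ up to a scalar, and its coefficient on $X_h$ is $\lambda_h^I+\nu\lambda_h^J$, which vanishes exactly when $\nu = -\lambda_h^I/\lambda_h^J$. I would therefore define $\nu_h := -\lambda_h^I/\lambda_h^J \in [-\infty,0]$ (with the conventions $\nu_h = -\infty$ if $\lambda_h^J=0$ and $\nu_h=0$ if $\lambda_h^I=0$), and partition $L = K^0 \sqcup \cdots \sqcup K^{r+1}$ by the common value $\nu_s$ of $\nu_h$ on $K^s$, ordered strictly decreasingly in $s$. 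The sign conditions from Construction~\ref{def:rel2} place the ratios in $[-\infty,0]$, and the boundary identifications are read off directly: in the vertex case, $\nu_s=0$ iff $h\in J\setminus I$ and $\nu_s=-\infty$ iff $h\in I\setminus J$, giving $K_0=I$ and $K_{r+1}=J$; in the non-vertex case, the same analysis gives $K^0 = L\setminus I$ and $K_0=I$. This proves~(\ref{rays1}) and the boundary statements of~(\ref{rays2}).

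For the rest of~(\ref{rays2}), I would argue as follows. The relation $(\mathcal{R}_I)+\nu_s(\mathcal{R}_J)$ has support exactly $K_s$ by construction, so $\Im A_{K_s}$ has codimension at least one. For the surjectivity of $A_{K_s\setminus\{k\}}$ with $k\in K_s$, any relation on those lines lifts to a relation on $A_L$ vanishing on $K^s\cup\{k\}$; vanishing on $K^s$ already cuts the $2$-dimensional relation space down to the line $\Qbb\cdot((\mathcal{R}_I)+\nu_s(\mathcal{R}_J))$, but this relation has a nonzero coefficient at $k$ since $\nu_k\neq\nu_s$, so only the trivial relation survives. Hence $A_{K_s\setminus\{k\}}$ is surjective and $\Im A_{K_s}$ has codimension exactly one. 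Conversely, any $K\subseteq L$ satisfying the hypothesis of~(\ref{rays2}) carries a unique up-to-scalar relation that lifts to a relation on $A_L$ with support exactly $K$; this relation must equal $(\mathcal{R}_I)+\nu(\mathcal{R}_J)$ for a unique $\nu$, forcing $L\setminus K=K^s$ with $\nu_s=\nu$, so $K=K_s$.

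For~(\ref{rays3}), the line carrying $\omega_{K_s}$ has equation $\sum_k (\lambda_k^I+\nu_s\lambda_k^J)D_k(\delta,\epsilon)=0$, and a direct computation using the normalizations $\sum_I\lambda_i^IC_i=1$ and $\sum_J\lambda_j^JC_j=d$ from Construction~\ref{def:rel2} gives the coefficient of $\epsilon$ as $1+d\nu_s$ and the coefficient of $\delta$ as $\sum_I\lambda_i^I(B'_i-B_i)+\nu_s\sum_J\lambda_j^J(B'_j-B_j)$, which yields the claimed slope. For~(\ref{rays4}), the map $\nu\mapsto (A+\nu B)/(1+\nu d)$ is a M\"obius transformation whose derivative $(B-Ad)/(1+\nu d)^2$ has constant sign; so $sl_{K_s}$ is monotone in $\nu_s$ and therefore monotone in $s$, and a rotation of the ambient plane, if needed, turns this monotonicity into a decrease.

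The main obstacle is the clean handling of the boundary ratios $\nu_s=0$ and $\nu_s=-\infty$, where one is effectively working with $(\mathcal{R}_J)$ or $(\mathcal{R}_I)$ alone and the slope formula must be read projectively; correspondingly one must verify that the extremal rays $K_0$ and $K_{r+1}$ coincide with the $\omega_I$, $\omega_J$ segments identified in Construction~\ref{def:rel2}. A secondary subtlety in the non-vertex case is that $d$ may vanish, simplifying both the denominator of the slope formula and the M\"obius derivative; here the extra sign condition $\sum_L\lambda_j^L(B'_j-B_j)>0$ from Construction~\ref{def:rel2} is precisely what guarantees the existence of a uniform rotation making~(\ref{rays4}) a genuine decrease of slopes.
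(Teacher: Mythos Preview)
Your proposal is correct and follows essentially the same approach as the paper: both arguments hinge on the fact that the relation space of $A_L$ is two-dimensional, so every minimal codimension-one relation is a scalar multiple of $(\mathcal{R}_I)+\nu(\mathcal{R}_J)$ for a unique $\nu$, and the partition of $L$ by the value $-\lambda_h^I/\lambda_h^J$ recovers exactly the complements $K^s$. Your treatment of~(\ref{rays2}) via lifting relations and cutting down the two-dimensional space is the same mechanism the paper uses, just phrased more compactly.

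The one place where the paper is more explicit than you is~(\ref{rays4}). You observe that $\nu\mapsto sl_{K_s}(\nu)$ is a M\"obius map and hence cyclically monotone, and then invoke ``a rotation if needed''. The paper instead applies the concrete rotation sending $sl_I$ to $+\infty$, computes the resulting relative slope $g(\nu)=\dfrac{a^2+1+\nu(ab+d)}{\nu(ad-b)}$, and then \emph{verifies the sign} $ad-b<0$ using the geometry of the Mori polygonal chain: in the vertex case $\omega_I\subset\{\delta<\delta_2\}$ and $\omega_J\subset\{\delta>\delta_2\}$ together with convexity of $\Omega_\emptyset$ force $sl_I<sl_J$, i.e.\ $a<b=b/d$; the non-vertex case with $d=0$ uses the normalization $b=\sum\lambda_j^L(B'_j-B_j)>0$ you already flagged. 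This sign is what pins down that the rotated slopes genuinely \emph{decrease} rather than increase; cyclic monotonicity alone does not distinguish the two orientations (rotations act orientation-preservingly on the projective line of slopes). Your outline is right, but to complete~(\ref{rays4}) you should insert this sign check.
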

\begin{proof}
Let $K\subseteq L$ be such that the codimension of the image of $A_{K}$ is 1 and for every $k\in K$ the map $A_{K\setminus \{k\}}$ is surjective.
Then there is an equation $$(\mathcal R_K)\;\;\sum \lambda_k^K X_k=0$$ satisfied by the lines of $A_K$.

Assume that $(\delta_2,\epsilon_2)$ is a vertex and let $I,J$ be as in Construction \ref{def:rel2}.
The relation $(\mathcal R_K)$ is a linear combination of $(\mathcal R_I)$ and $(\mathcal R_J)$.
Thus there are $\mu_I,\mu_J$ such that $(\mathcal R_K)=\mu_I(\mathcal R_I)+\mu_J(\mathcal R_J)$.
We notice that if $\mu_I=0$, then by the minimality of $K$ we have $K=J$ and if $\mu_J=0$ then by the minimality of $K$ we have $K=I$.

Assume that $(\delta_2,\epsilon_2)$ is not a vertex and let $I$ be as in Construction \ref{def:rel2}.
The relation $(\mathcal R_K)$ is a linear combination of $(\mathcal R_I)$ and $(\mathcal R_L)$.
Thus there are $\mu_I,\mu_L$ such that $(\mathcal R_K)=\mu_I(\mathcal R_I)+\mu_L(\mathcal R_L)$.
We notice that if $\mu_L=0$ then by minimality of $K$ we have $K=I$ and if $\mu_L\neq 0$ then $L\backslash I\subseteq K$.

\noindent If $(\delta_2,\epsilon_2)$ is not a vertex, we set $J=\{j\in L\,\mid\,\lambda_j^L\neq 0\}$. Notice that $J$ gives the relation $(\mathcal R_L)$.

If  $\mu_I\lambda_k^I+\mu_J\lambda_k^J\neq 0$ for every $k$, then
the image of $A_{L\setminus\{\ell\}}$ has the same codimension as the image of $A_{L}$, and this is a contradiction.
 Therefore the set $\{k\vert\;\mu_I\lambda_k^I+\mu_J\lambda_k^J=0 \}$
is non-empty and $K$ is the complement of this set in $L$. 

\noindent We set $N=\{-\lambda_k^I/\lambda_k^J\}=\{\nu_0>\ldots>\nu_{r+1}\}$ with the convention that $\nu_{r+1}=-\infty$.

\noindent We set $K_s=\{k\vert\;-\lambda_k^I/\lambda_k^J=\nu_s\}$ and $K_s=L\setminus K^s$.

This proves \ref{rays1} and \ref{rays2}.

\medskip

As for \ref{rays3}, the slope $sl_{K_s}$ is
$$\frac{\sum_{k\in K_s}\lambda_k^{K_s}(B'_k-B_k)}{\sum_{k\in K_s}\lambda_k^{K_s} C_k}.$$
Since up to a multiple $\lambda_k^{K_s}=\lambda_k^I+\nu_s\lambda_k^{\tilde I}$, we get the third part of the statement, with the assumption  that, if $(\delta_2,\epsilon_2)$ is a vertex, we have $\sum_{i\in I}\lambda_i^IC_i=1$ and $\sum_{j\in J}\lambda_j^JC_j=1$, and otherwise $\sum_{i\in I}\lambda_i^IC_i=1$ and  $\sum_{j\in L}\lambda_j^LC_j=1$ or 0.

\medskip

Set $$
\begin{array}{l}
a=\sum_{i\in I}\lambda_i^I(B'_i-B_i)=sl_I\\
\\
b=
\sum_{j\in J}\lambda_j^J(B'_j-B_j)=sl_J \,\mbox{ and }\\
\\
d=\left\{
\begin{array}{l}
1\mbox{ if }(\delta_2,\epsilon_2)\mbox{ is a vertex  and}\\
 \sum_{j\in L}\lambda_j^LC_j\mbox{ (which is 1 or 0) otherwise}.
\end{array}
\right.
\end{array}
$$

To prove \ref{rays4}, set $\displaystyle f(x)=\dfrac{a+bx}{1+dx}$.
Note that $sl_{K_s}=f(\nu_s)$. And apply a rotation $\frac{1}{1+a^2}\left(\begin{array}{cc}a & -1\\1 & a \end{array}\right)$ in order to replace $sl_I=a$ by $+\infty$ (vertical direction). This way, the slopes $sl_K$ are replaced by relative slopes $relsl_K:=\dfrac{a \,sl_K+1}{a- sl_K}$. Set $g(x)=\dfrac{a^2+1+x(ab+d)}{x(ad-b)}$ so that $relsl_{K_s}=g(\nu_s)$. 

We now prove that  $ad-b<0$. By Construction \ref{def:rel2}, if $(\delta_2,\epsilon_2)$ is a vertex we have
$\omega_I\subseteq\{\delta<\delta_2\}$ and $\omega_J\subseteq\{\delta>\delta_2\}$.
 By the convexity of $\Omega_{\emptyset}$ the slope $sl_I$ is smaller that $sl_J$, so that $ad-b<0$.
  If $(\delta_2,\epsilon_2)$ is not a vertex and $d=1$, similarily we have $sl_I< sl_{J}=\frac{b}{d}$ , so that $ad-b<0$. And if $d=0$, we have $b>0$ (still by Construction \ref{def:rel2}).
  
In all cases, since $g$ is increasing, $relsl_{K_s}$ is decreasing from $+\infty$ ($s=0$ and $\nu_s=0$) to $\frac{ab+d}{ad-b}$ ($\nu_s=-\infty$).



\end{proof}
\begin{center}
\begin{tikzpicture}
\draw (0,0) node {$\bullet$};
\draw (0,-0.3) node {$\omega_L$};
\draw (-2,0.1) node {$\omega_I$};
\draw (2,0.3) node {$\omega_J$};
\draw (-2.5,1) node {$\omega_{K_1}$};
\draw (-1.3,1.6) node {$\omega_{K_2}$};
\draw (2.5,2) node {$\omega_{K_r}$};
\draw[thick] (0,0)--(-3,0.5);
\draw[thick] (0,0)--(3,0.8);
\draw (0,0)--(2,2);\draw (0,0)--(-2.5,1.3);
\draw[dotted] (0,0)--(-1,2);
\draw[dotted] (1,1) arc (45:120:1.4);
\end{tikzpicture}
\begin{tikzpicture}
\draw (0,0) node {$\bullet$};
\draw (0,-0.3) node {$\omega_L$};
\draw (-2,-1) node {$\omega_I$};
\draw (-2.5,1) node {$\omega_{K_1}$};
\draw (-1.3,1.6) node {$\omega_{K_2}$};
\draw (2.6,2) node {$\omega_{K_{r+1}}$};
\draw[thick] (-3,-1)--(3,1);
\draw (0,0)--(2,2);
\draw (0,0)--(-2.5,1.3);
\draw[dotted] (0,0)--(-1,2);
\draw[dotted] (1,1) arc (45:120:1.4);
\end{tikzpicture}
\end{center}

\section{Proof of the main theorem}
Let $G$ be a reductive group and $H\subseteq G$ be a horospherical subgroup.
We choose a basis of $M_{\mathbb Q}$ so that $M_{\mathbb Q}\cong\mathbb Q^n$.
We fix a horospherical embedding $Z$ of $G/H$ and set 
$p=r+|S\backslash R|$. 
Let $A$ be the matrix associated to the linear map $\varphi(m)=(\langle m,x_i \rangle_{i=1\ldots r},\;  \langle m,\alpha_M^{\vee} \rangle_{\alpha\in S\backslash R})$.
Denote by $J_0\subseteq I_0$ the set of indices $S\backslash R$.

Let $B=(-d_1,\ldots, -d_r, -d_{\alpha})$ and $B'=(-d'_1,\ldots, -d'_r, -d'_{\alpha})$
be such that $\sum d_i Z_i+\sum d_{\alpha} D_{\alpha}$ and $\sum d'_i Z_i+\sum d'_{\alpha} D_{\alpha}$ are ample divisors.
Let $-K_Z=\sum c_i Z_i+\sum c_{\alpha} D_{\alpha}$.
Let $C=(c_1,\ldots, c_r, c_{\alpha})$.

 Suppose now that for any $i\in I_0\backslash J_0$ there exist negative $\epsilon_0$ and $\epsilon_1$ such that $(0,\epsilon_0)$ and $(1,\epsilon_1)$ are in $\omega_i$. Then for any $\delta$, 
 the intersection $\omega_i\cap\{(\delta,\epsilon)\,\mid \, \epsilon\geq 0\}$ is an open segment (possibly empty) with one extremity at $(\delta,0)$. 
 Then the family $(P^{\delta,\epsilon})_{\epsilon\in\Qbb_{\geq 0}}$ describes a HMMP.

\begin{prop}\label{prop:HMMPisMMP}
For any $\delta\in[0,1]$ in the complement of a finite set, the HMMP described by the family $(P^{\delta,\epsilon})_{\epsilon\in\Qbb_{\geq 0}}$ is an MMP.
\end{prop}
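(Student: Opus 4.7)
The plan is to identify the ``bad'' $\delta$'s explicitly as a finite set and then show that, for every other $\delta$, each step of the HMMP along the vertical line $\ell_\delta:=\{\delta\}\times\Qbb_{\geq 0}$ falls into one of the cases of Proposition \ref{prop:morphXY}, all of which correspond to an MMP step.

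First, I would observe that the sets $U_0$ and $U_0'$ are both finite. Indeed, by the generality assumptions on $B,B'$ (Lemma \ref{lem:generalB} and Corollaries \ref{cor:pointsgeneraux} and \ref{cor:pointsegmentgeneraux}), a point of $\Omega_\emptyset$ of the form $\omega_I=\{(\delta,\epsilon)\}$ is determined (up to the collection $I$) and there are only finitely many subsets $I\subseteq I_0$. Moreover, by Corollary \ref{cor:segmentsgeneraux} combined with Lemma \ref{lem:3int}, each potential vertical segment $\omega_I$ of MPC sits on a single vertical line, and there are only finitely many such $I$. Let $F\subset [0,1]$ be the union of the projections on the first coordinate of $U_0\cup U_0'$ together with the $\delta$-coordinates of the vertical segments $\omega_I$ in MPC. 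Then $F$ is finite.

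Next I would fix $\delta\in[0,1]\setminus F$. By hypothesis $(\delta,0)\in\omega_\emptyset$, and by Remark \ref{rem:U0Qf} the variety $X^{\delta,0}$ is $\Qbb$-factorial. As $\epsilon$ grows, the line $\ell_\delta$ stays in $U_2$ except at finitely many crossings of $\Omega_\emptyset\setminus\omega_\emptyset$; by the choice of $\delta$, each such crossing point lies in $U_1$ and meets exactly one stratum $\omega_I$ of dimension $1$ (transversally). At every such crossing, Proposition \ref{prop:morphXY} yields an extremal contraction $X^{\delta,\epsilon^-}\longrightarrow Y^{\delta,\epsilon_*}$ of relative Picard number $1$, which is either a Mori fibration (when $I=I_+$ or $I=I_-$), a divisorial extremal contraction (when $|I|\geq 2$ and one of $I_\pm$ is a singleton in $I_0\setminus J_0$), or a small contraction; in the last case, continuing past $\epsilon_*$ on $\ell_\delta$ lands us in the $\Qbb$-factorial variety $X^{\delta,\epsilon^+}$ on the other side (again by Remark \ref{rem:U0Qf}), so the pair of maps realises a flip. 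The $K$-negativity of all these contractions is the content of the HMMP construction (Theorem \ref{HMMP}), since $\epsilon$ parametrises motion in the $K_Z$-direction. Finally, the algorithm terminates at the last crossing, which must be of the first type of Proposition \ref{prop:morphXY} (the polytope drops dimension), giving a Mori fibration.

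Putting these pieces together, for $\delta\in[0,1]\setminus F$ every step of the HMMP is either a $K$-negative extremal divisorial contraction, a flip, or a Mori fibration, and all intermediate varieties are $\Qbb$-factorial. This is exactly what an MMP is. The main conceptual point (and the only step requiring care) is checking that $U_0\cup U_0'$ together with the vertical-segment loci is finite so that the ``bad'' $\delta$'s form a finite set; everything else is a direct invocation of Proposition \ref{prop:morphXY} combined with the HMMP framework.
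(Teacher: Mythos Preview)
Your approach is essentially the same as the paper's: exclude the first projections of $U_0\cup U_0'$ and of the vertical one-dimensional $\omega_I$'s, then invoke Proposition~\ref{prop:morphXY} at every crossing so that each HMMP step is an extremal contraction (hence an MMP step). The paper's proof is terser but follows exactly this line.

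One small point to tighten: you restrict the excluded vertical segments to those ``in MPC'', but the argument needs transversality at \emph{every} one-dimensional $\omega_I$ the line $\ell_\delta$ meets, including those in the interior of $\Omega_\emptyset$ (where divisorial contractions and flips occur). The paper's set $U''$ is accordingly defined as the set of $\delta$ for which \emph{some} one-dimensional $\omega_I$ is contained in $\{\delta\}\times\Qbb$, without reference to MPC. With that adjustment your set $F$ matches the paper's $p_1(U_0\cup U_0')\cup U''$ and the rest of your argument goes through verbatim.
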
 

\begin{proof}
By Theorem \ref{HMMP}, the family $(P^{\delta,\epsilon})_{\epsilon\in\Qbb_{\geq 0}}$ describes an HMMP.
Let $U''$ be the set of $\delta$ such that there is a 1-dimensional set $\omega_I$ included in $\{\delta\}\times\mathbb Q$.
Let $p_1\colon \Qbb^2\to \Qbb$ be the projection onto the first factor and let $\delta\in[0,1]\setminus p_1(U_0\cup U'_0)\cup U''$.
By Proposition \ref{prop:morphXY},
the HMMP described by the family $(Q^{\delta,\epsilon})_{\epsilon\in\Qbb_{\geq 0}}$ consists of extremal contractions. 

\end{proof}

\begin{teo}\label{thm:mainth}
Let $G$ be a connected reductive algebraic group.
Let $X$ and $Y$ be horospherical $G$-varieties which are $G$-equivariantly birational.
Assume moreover that there are Mori fibre space structures $X/S$ and $Y/T$.
Then, there is a horospherical Sarkisov program from $X/S$ to $Y/T$.

More precisely, there are a resolution of the indeterminacy $Z$ of $X\dasharrow Y$, divisors $A_X$ and $A_Y$, a 2-dimensional polyhedron $\Omega$, points
$(\delta_X,\epsilon_X)$, $(\delta_Y,\epsilon_Y)$, $(\delta_T,\epsilon_T)$, $(\delta_S,\epsilon_S)\in\Omega$ and singletons $\omega_{L_1},\ldots,\omega_{L_\ell}\subseteq\partial\Omega$ such that
\begin{enumerate}
\item every point $(\delta,\epsilon)\in\Omega$ defines the horospherical variety of pseudo-moment polytope $Q_{(1-\delta)A_X+\delta A_Y-\epsilon K_Z}$;
\item $(\delta_X,\epsilon_X)$, $(\delta_Y,\epsilon_Y)$, $(\delta_T,\epsilon_T)$, $(\delta_S,\epsilon_S)$ correspond, respectively, to $X,Y,S,T$;
\item $(\delta_S,\epsilon_S),(\delta_T,\epsilon_T)\in\partial\Omega$;
\item if we denote $\omega_{L_i}=\{(\delta_i,\epsilon_i)\}$, we have $0<\delta_1<\cdots<\delta_l<1$;
\item every $\omega_{L_i}$ defines a Sarkisov link $W_i/V_i\dasharrow W_{i+1}/V_{i+1}$ where $W_1/V_1=X/S$ and $W_{l+1}/V_{l+1}=Y/T$.
\end{enumerate}
\end{teo}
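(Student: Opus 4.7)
The strategy is to realize both Mori fibre spaces as outcomes of a HMMP on a common smooth resolution $Z$, scaled by two different ample divisors $A_X,A_Y$ on $Z$, and then to read off the Sarkisov links from the two-parameter family of polytopes built from $A_X$, $A_Y$ and $-K_Z$, whose combinatorial structure was analysed in Section \ref{sec:poly}. \textbf{Setting up the family.} By Lemma \ref{lem:exres} pick a smooth $G/H$-embedding $\pi\colon Z\to X\times Y$ resolving $X\dashrightarrow Y$. Proposition \ref{pro:resMMP}, applied to $X/S$ and to $Y/T$, produces open neighborhoods $U_X,U_Y\subset WDiv(Z)_\Qbb$ of ample divisors each of which scales a HMMP on $Z$ ending, respectively, with $X/S$ or with $Y/T$. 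The genericity conditions of Subsection \ref{ssec:gen} (Lemma \ref{lem:generalB} and Corollaries \ref{cor:segmentsgeneraux}, \ref{cor:pointsgeneraux}, \ref{cor:pointsegmentgeneraux} together with Lemma \ref{lem:3int}) cut out a Zariski open subset of the coefficient space, so we may choose $A_X\in U_X$, $A_Y\in U_Y$ such that $B=-A_X$, $B'=-A_Y$ and $C=-K_Z$ satisfy all these genericity hypotheses, together with the positivity assumptions stated at the beginning of Section 6. This gives the two-parameter family studied in Subsection \ref{linkwithHMMP}, and we keep its polyhedral decomposition $\{\omega_I\}$ of $\Omega_\emptyset$.

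\textbf{The polyhedron and the distinguished points.} Define $\Omega:=\Omega_\emptyset\cap\{0\le\delta\le 1,\ \epsilon\ge 0\}$; it is a two-dimensional polyhedron by Lemma \ref{lem:closed}. Along the edge $\delta=0$ the one-parameter family $(P^{0,\epsilon})_{\epsilon\ge 0}$ is precisely the HMMP of $Z$ scaled by $A_X$, which traverses $Z$ down to $X$ and ends with the fibration $X\to S$; this yields points $(\delta_X,\epsilon_X)=(0,\epsilon_X)$ and $(\delta_S,\epsilon_S)=(0,\epsilon_S)\in\partial\Omega$ realizing $X$ and $S$. Symmetrically, $A_Y$ gives $(\delta_Y,\epsilon_Y)=(1,\epsilon_Y)$ and $(\delta_T,\epsilon_T)=(1,\epsilon_T)\in\partial\Omega$. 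The upper boundary of $\Omega$ is the Mori polygonal chain $MPC$: a finite sequence of one-dimensional segments $\omega_{I_1},\dots,\omega_{I_{\ell+1}}$, joined at finitely many vertices $\omega_{L_s}=\{(\delta_s,\epsilon_s)\}$, ordered by the genericity assumptions so that $0<\delta_1<\dots<\delta_\ell<1$.

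\textbf{Reading off each Sarkisov link.} By Lemma \ref{lem:Gmorph} and Remark \ref{rem:ontheline}, each open segment $\omega_{I_s}$ corresponds to a $G$-equivariant Mori fibration $W_s/V_s$, with $W_1/V_1=X/S$ and $W_{\ell+1}/V_{\ell+1}=Y/T$. At each intermediate vertex $\omega_{L_s}$, Construction \ref{def:rel2} and Proposition \ref{pro:rays} yield a partition $L_s=K^0\sqcup\cdots\sqcup K^{r+1}$ and a fan of rays $\omega_{K_t}$ issuing from $(\delta_s,\epsilon_s)$, two of which lie on $MPC$ while the others enter the interior of $\Omega$. Perturbing slightly inside each such ray gives points of $U_2$ representing $\Qbb$-factorial $G/H$-embeddings which, together with $W_s$, $W_{s+1}$ and the variety $Z^{\delta_s,\epsilon_s}$, form a commutative diagram over $Z^{\delta_s,\epsilon_s}$. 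Propositions \ref{prop:morphXY} and \ref{prop:morphXZ} identify each arrow in this diagram as an extremal, $K$-negative or $K$-positive contraction of the prescribed type (small, divisorial, or fibration), so that the diagram is precisely a Sarkisov link of one of the four types I--IV connecting $W_s/V_s$ and $W_{s+1}/V_{s+1}$. Concatenating these links from $s=1$ to $\ell$ produces the desired horospherical Sarkisov program from $X/S$ to $Y/T$.

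\textbf{Main obstacle.} The technical core is the case analysis at a general vertex $\omega_{L_s}$. One must distinguish according to whether $\omega_{L_s}\subset\omega_\emptyset$ or $\omega_{L_s}\subset\Omega_\emptyset\setminus\omega_\emptyset$, and according to the sign patterns of the two relations of Construction \ref{def:rel2} controlling the adjacent rays; each possibility has to be matched with one of the four Sarkisov link shapes. The main work is to check that the relative Picard numbers computed from \eqref{eq:pic} and the extremality and $K$-sign statements furnished by Propositions \ref{prop:morphXY} and \ref{prop:morphXZ} together produce a genuine Sarkisov link of the right type at every vertex, and that the two outer segments of the fan of rays are the two consecutive $MPC$ segments $\omega_{I_s}$, $\omega_{I_{s+1}}$ rather than rays going into the interior.
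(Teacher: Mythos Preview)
Your plan is essentially the paper's own strategy: common smooth resolution $Z$ via Lemma~\ref{lem:exres}, ample divisors $A_X,A_Y$ produced by Proposition~\ref{pro:resMMP} and made general as in Subsection~\ref{ssec:gen}, then reading the links from the vertices $\omega_{L_i}\in U_0$ on the Mori polygonal chain using the fan of rays of Proposition~\ref{pro:rays} together with Propositions~\ref{prop:morphXY} and~\ref{prop:morphXZ}.

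Two small corrections to your description of the ``main obstacle'' will make the plan line up exactly with the argument that actually works.
\begin{itemize}
\item The case distinction at a vertex $\omega_{L}$ is \emph{not} ``$\omega_L\subset\omega_\emptyset$ versus $\omega_L\subset\Omega_\emptyset\setminus\omega_\emptyset$'': by definition $MPC\subset\Omega_\emptyset\setminus\omega_\emptyset$, so every $\omega_{L_i}$ you consider already lies outside $\omega_\emptyset$. The relevant dichotomy (Construction~\ref{def:rel2}) is whether $\omega_L$ is a \emph{corner} of $\partial\Omega_\emptyset$ or lies in the relative interior of a segment of $\partial\Omega_\emptyset$. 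Related to this, you should note that $U'_0\cap\partial\Omega_\emptyset=\emptyset$, so the only boundary points not in $U_1$ are genuinely in $U_0$.
\item Propositions~\ref{prop:morphXY} and~\ref{prop:morphXZ} do not by themselves ``identify the diagram as a Sarkisov link of type I--IV''. What they give you is (i) which of the interior rays $\omega_{K_s}$ can carry a divisorial contraction (only the two extreme ones, as in the paper's Claim), and (ii) that the relative Picard number of each $X_i\to Z^{\delta_s,\epsilon_s}$ is at most $2$. The link type is then pinned down by a Picard-number bookkeeping in three cases, according to whether the variety $R=Z^{\delta_s,\epsilon_s}$ is isomorphic to neither, one, or both of the adjacent MPC bases $T_0,T_{t+1}$; these three cases produce types IV, I/III, and II respectively. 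That trichotomy, rather than the sign patterns alone, is the organizing principle you should carry out.
\end{itemize}
A last small slip: an open segment $\omega_{I_s}$ on $MPC$ encodes the \emph{base} $V_s$ of the Mori fibration; the total space $W_s$ is the variety attached to the adjacent open $U_2$-chamber, not to $\omega_{I_s}$ itself.
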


\begin{proof}
Since the birational map $X\dasharrow Y$ is $G$-equivariant, 
$X$ and $Y$ are both $G/H$-embedding with the same horospherical homogeneous space $G/H$.

By Lemma \ref{lem:exres} and Proposition \ref{pro:resMMP} there is a smooth resolution $Z$ of the indeterminacies of $X\dasharrow Y$ and there are euclidean open sets $U_X$ and $U_Y$ of
 $WDiv(Z)_\Qbb$ such that every divisor in $U_X$ (resp. $U_Y$) is ample and for every $A\in U_X$ 
 (resp. in $U_Y$)
 the Mori fiber space $X/T$ (resp. $Y/S$) is the outcome of the HMMP from $Z$ with scaling of $A$.
 
 We pick $(A_X,A_Y)\in U_X\times U_Y$ satisfying the generality conditions of \eqref{ssec:gen}.
 Since the open set determined in \eqref{ssec:gen} is Zariski open, we can find such $(A_X,A_Y)$.
 
Let
$B=(-d_1,\ldots, -d_r, (-d_{\alpha})_{\alpha\in S\backslash R})$ and $B'=(-d'_1,\ldots, -d'_r, (-d'_{\alpha})_{\alpha\in S\backslash R})$ and $C=(c_1,\ldots, c_r, (c_{\alpha})_{\alpha\in S\backslash R})$ be such that
\begin{align*}
A_X=&\sum_{i=1}^d d_i Z_i+\sum_{\alpha\in S\backslash R} d_{\alpha} D_{\alpha}\\
A_Y=&\sum_{i=1}^d d'_i Z_i+\sum_{\alpha\in S\backslash R} d'_{\alpha} D_{\alpha}\\
-K_Z=&\sum_{i=1}^d c_i Z_i+\sum_{\alpha\in S\backslash R} c_{\alpha} D_{\alpha}.
\end{align*}
 
 Let $\Omega_I$ and $\omega_I$ be the polytopes of Definition \ref{def:omega}.
 
 Thus there are two segments $\omega_I$ and $\omega_J$ of the Mori poligonal chain such that
 every $(\delta,\epsilon)$
 in a euclidean neighborhood of $\omega_I$
  satisfies $X^{\delta,\epsilon}\cong X$ and
  if $(\delta_1,\epsilon_1)\in\omega_I$ then $Y^{\delta_1,\epsilon_1}\cong T$.
The same holds for $J$.  
  
  The segments $\omega_I$ and $\omega_J$ disconnect the chain.
  Let $\mathcal C\subseteq MPC$ be such that  $\omega_I\cup\mathcal C\cup\omega_J$ is connected.
  Let $\omega_{L_1},\ldots,\omega_{L_h}$ be the points in $U_0\cap \mathcal C$.
  We notice that $U'_0\cap \partial\Omega_{\emptyset}=\emptyset$.
  Indeed, if $(\delta,\epsilon)\in\omega_I\cap\omega_J$ and $\omega_I$, $\omega_J$ are not aligned,
  then the convex hull of $\omega_I$ and $\omega_J$ is non-empty and open (and thus of dimension 2), proving that 
   $(\delta,\epsilon)\in\omega_{\emptyset}$.
  
 \noindent We prove that every $\omega_{L_i}$ describes a link.
  We write  $\omega_L$ for simplicity.
  
  Let $L=K^0\sqcup\dots\sqcup K^{r+1}$ be the partition existing by Proposition \ref{pro:rays}.
  We fix a euclidean neighborhood $\Delta$ of $\omega_L$ and for any $i\in\{0,\dots,r\}$, let $X_i$ be the horospherical variety corresponding to a point in the open set delimited by $\omega_{K_i}$ and $\omega_{K_{i+1}}$. 
  If $\omega_L$ is not a vertex, let $X_{r+1}$ be the horospherical variety corresponding to a point in the open set delimited by $\omega_{K_{r+1}}$ and $\omega_{I}$.
  Let $t$ be $r$ if $\omega_L$ is a vertex and $r+1$ if not.
We denote by $T_0$, respectively $T_{t+1}$, the horospherical variety corresponding to a point in MPC on the left, respectively on the right, of $\omega_L$.
  
  We prove first that 
  \begin{claim}\label{cla:contr}
  the only possible divisorial contractions between two varieties $X_i$ and $X_{i\pm1}$
  are $X_1\to X_0$ and $X_{t-1}\to X_{t}$.
  \end{claim}
Recall that we have Mori fibrations $X_0\to T_0$ and $X_{t}\to T_{t+1}$.
 Let $s\in\{1,\dots,r\}$. 
 First assume that $\omega_L$ is a vertex.
 Then $K_s$ is such that $K_s^-=K^0\sqcup\dots\sqcup K^{s-1}$ and $K_s^+=K^{s+1}\sqcup\dots\sqcup K^{r+1}$. 
 By Proposition~\ref{prop:morphXY}, around $\omega_{K_s}$ we have flips except if $s=1$ and $K^0=\{i\}$ with $i\in I_0\backslash J_0$  or $s=r$ and $K^{r+1}=\{i\}$ with $i\in I_0\backslash J_0$.

 Assume now that $\omega_L$ is not a vertex.
Let $s\in\{1,\dots,t\}$, then $K_s$ is such that $K_s^-=K^0_-\sqcup K^1\sqcup\dots\sqcup K^{s-1}$ and $K_s^+=K^0_+\sqcup K^{s+1}\sqcup\dots\sqcup K^{r+1}$. Since $K^0_+$ and $K^0_-$ are non-empty 
(by Notation~\ref{def:rel2}), by Proposition~\ref{prop:morphXY} around $\omega_{K_s}$ we have flips except if $s=1$ and $K^0_-=\{i\}$ with $i\in I_0\backslash J_0$  or  $s=r+1$ and  $K^0_+=\{i\}$ with $i\in I_0\backslash J_0$. 
This finishes the proof of the claim.

\smallskip

Let $R$ be the variety corresponding to $\omega_L$.
Notice that we have fibrations from $T_0\to R$ and  $T_{t+1}\to R$.
There are three cases:
\begin{enumerate}
\item $R\cong T_0\cong T_{t+1}$;
\item $R\cong T_0$ or $R\cong T_{t+1}$, and we are not in case 1;
\item $R\not\cong T_0$ and $R\not\cong T_{t+1}$.
\end{enumerate}

In case 1,  by Remark~\ref{rem:morphXZ}, we have $t\geq 1$. In particular, $sl_{K_1}<sl_{K_{0}}$, or $sl_{K_t}>sl_{K_{t+1}}$ (with the convention that $K_{r+2}=I$ if $\omega_L$ is not a vertex). A priori, $sl_{K_1}$ and $sl_{K_t}$  could be $\infty$, but the next paragraph proves that it cannot happen.

\noindent If $sl_{K_1}<sl_{K_{0}}$, then the Mori fibration $X_0\dashrightarrow R$ factors through $X_0\dasharrow Y\to R$ where $Y$ is the horospherical variety corresponding to a point of 
$\omega_{K_1}$. Then the map $X_0\dasharrow Y$ is either an isomorphism or a K-negative contraction. It cannot be a K-negative contraction because $sl_{K_1}<sl_{K_{0}}$. 
Then $X_0=Y$, which implies that $X_1\to X_0$ is a divisorial contraction. Similarly, if $sl_{K_t}>sl_{K_{t+1}}$, $X_{t-1}\to X_t$ has to be a divisorial contraction.


 
 

\noindent Thus there is $i=1$ or $i=t-1$ such that $\rho(X_i)=\rho(R)+2$.
Assume it is $i=1$. By Proposition \ref{prop:morphXZ} and Claim~\ref{cla:contr}, for every $i\in\{1,\dots,t-1\}$ we have $\rho(X_i)=\rho(X_1)$ so that $\rho(X_{t-1})=\rho(X_t)+1$ and thus $X_{t-1}\to X_{t}$
is also a divisorial contraction. We have a type II link.

In case 2, assume that $R\cong T_0$. Again by Proposition \ref{prop:morphXY} the variety $R$
 is $\Qbb$-factorial.
Thus $R\not\cong T_{t+1}$ implies $\rho( T_{t+1})=\rho(R)+1$ and $\rho( X_{t})=\rho(R)+2$.
By Proposition \ref{prop:morphXZ} and Claim~\ref{cla:contr}, for every $i\in\{1,\dots,t\}$ we have $\rho(X_i)=\rho( X_{t})=\rho(R)+2=\rho(T_0)+2$.
Since $\rho(X_0)=\rho(T_0)+1$, the map $X_1\to X_0$ is a divisorial contraction and we have a type I  link. Similarly if $R\cong T_{r+1}$ we get a type III link.

In case 3 we have $\rho(T_0)=\rho(R)+1$ and $\rho(T_{t+1})=\rho(R)+1$.
Moreover $\rho(X_0)=\rho(T_0)+1$ and $\rho(X_{t})=\rho(T_{t+1})+1$.
By Proposition \ref{prop:morphXZ} and Claim~\ref{cla:contr}, for every $i\in\{0,\dots,t-1\}$ the map $X_i\dasharrow X_{i+1}$ is an isomorphism in codimension 1 and we get a type IV link. 
\end{proof}

\begin{rem}
If $\omega_L$ is not a vertex then we cannot have a link of type IV with fibrations $T_0\longrightarrow R$ and $T_{t+1}\longrightarrow R$. Indeed from $\omega_I$ (both side) to $\omega_L$ we get two birational maps, one of the two can be divisorial (if condition of Lemma~\ref{lem:diviseursZ2} occurs), but not in case 3.
\end{rem}

\appendix

\section{ Toric Example}
Set $X:=\Pbb^1\times \Pbb^1$,  $S=\Pbb^1$,  $Y$ the projective bundle $\Pbb(\mathcal{O}\oplus\mathcal{O}(2))$ and $T=\Pbb^1$.
Fans of $X$ and $Y$ are the following:
\begin{center}
\begin{tikzpicture}\node [rectangle] (a) at (0,0) {
    \begin{tikzpicture}[scale=0.5]
     \reseau2
\draw[very thick] (0,0) -- (3,0);
\draw[very thick] (0,0) -- (-3,0);
\draw[very thick] (0,0) -- (0,3);
\draw[very thick] (0,0) -- (0,-3);
 
\node at (-4,3) {$\mathbb{F}_X$};
    \end{tikzpicture}
};

 \node [rectangle] (b) at (6,0) {\begin{tikzpicture}[scale=0.5]
     \reseau2
\draw[very thick] (0,0) -- (3,0);
\draw[very thick] (0,0) -- (-3,0);
\draw[very thick] (0,0) -- (0,3);
\draw[very thick] (0,0) -- (3,-3/2);
 
\node at (-4,3) {$\mathbb{F}_Y$};
    \end{tikzpicture}
};
\end{tikzpicture}
\end{center}

Here $G=(\Cbb^*)^2$ and coincides with the Borel subgroup.

A resolution of $X$ and $Y$ is the toric variety $Z$ given by the following fan, where we index the edges as in the picture:
\begin{center}

    \begin{tikzpicture}[scale=0.5]
     \reseau2
\draw[very thick] (0,0) -- (3,0);
\draw[very thick] (0,0) -- (-3,0);
\draw[very thick] (0,0) -- (0,3);
\draw[very thick] (0,0) -- (0,-3);
 \draw[very thick] (0,0) -- (3,-3/2);
\draw[very thick] (0,0) -- (3,-3); 
\node at (-4,3) {$\mathbb{F}_Z$};
\node at (3.5,0) [color=red] {1};
\node at (0,3.5) [color=red] {2};
\node at (-3.5,0) [color=red] {3};
\node at (0,-3.5) [color=red] {4};
\node at (3.5,-3.5) [color=red] {5};
\node at (3.5,-3/2) [color=red] {6};
    \end{tikzpicture}
\end{center}

We let $p\colon Z\to X$ and $q\colon Z\to Y$ be the two morphisms resolving the indeterminacy of $X\dasharrow Y$.

We will consider the following matrix $A:=\left(\begin{array}{cc}
1 & 0\\
0 & 1\\
-1 & 0\\
0 & -1\\
1 & -1\\
2 & -1
\end{array}\right)$.

A $G$-stable divisor of $Z$ is of the form $D=\sum_{i=1}^6d_iD_i$. Note that $D_1$, $D_2$, $D_3$ and $D_4$ are also prime $G$-stable divisor of $X$ and $D_1$, $D_2$, $D_3$ and $D_6$ are also prime $G$-stable divisor of $Y$.

\noindent Let $D=\sum_{i=1}^6d_iD_i$ be a divisor on Z and $B$ be the column matrix associated to $(-d_1,\ldots,-d_6)$.
The divisor $D$ is ample if and only if for any $I\subseteq\{\{1,2\},\{2,3\},\{3,4\},\{4,5\},\{5,6\},\{1,6\} \}$ we have $A_{\bar{I}}(A_I^{-1}B_I)>B_I$.

This inequality system reduces to the following system, thus defining the ample cone of $Z$:

\begin{equation}\label{QZ}
 d_1+d_5>d_6,\,\,
d_2+d_6>2d_1,\,\,
 d_3+d_5>d_4,\,\,
 d_4+d_6>2d_5.
\end{equation}

The polytope $Q_D$ is the pseudo-moment polytope of $(X,p_*D)$ if and only if for every $I\subseteq\{\{1,2\},\{2,3\},\{3,4\},\{1,4\} \}$ we have $A_{\bar{I}}(A_I^{-1}B_I)>B_I$.
Equivalently, if and only if the following inequalities are satisfied 
\begin{equation}\label{QX}
 d_1+d_3>0,\,\, d_2+d_4>0,\,\, d_5> d_1+d_4,\,\, d_6>2d_1+d_4.
\end{equation}
Note that the first two inequalities correspond to the condition for $p_*D$ to be ample and the last two other correspond to the fact that the lines 5 and 6 are not necessary to define $Q_D$.

Similarly, $Q_D$ is the pseudo-moment polytope of $(Y,q_*D)$ if and only if for every $I\subseteq\{\{1,2\},\{2,3\},\{3,6\},\{1,6\} \}$ we have $A_{\bar{I}}(A_I^{-1}B_I)>B_I$.
Equivalently, if and only if the following inequalities are satisfied 
\begin{equation}\label{QY}
 d_1+d_3>0,\,\, d_2+d_6>2d_1,\,\, d_4> 2d_3+d_6,\,\, d_5>d_3+d_6.
\end{equation}
Note that the first two inequalities correspond to the condition for $q_*D$ to be ample and the last two other correspond to the fact that the lines 4 and 5 are not necessary to define $Q_D$.

\noindent To run the horospherical Sarkisov program, we have to choose $D$, $D'$ such that
\begin{enumerate}
\item\label{extorHS1} $Q_D$ and $Q_{D'}$  are pseudo-moment polytopes of $(X,p_*D)$ and $(Y,q_*D')$ respectively;
\item\label{extorHS2} there exist  $\epsilon<0$ and $\epsilon'<0$ such that $D+\epsilon K_Z$ and $D'+\epsilon' K_Z$ are ample over $Z$.
\item\label{extorHS3} the HMMP with scaling of $D$ (resp. $D'$) ends with $X/S$ (resp. $Y/T$).
\end{enumerate}

\noindent Condition \ref{extorHS1} is given by \eqref{QX} and \eqref{QY}.\\

\noindent For condition \ref{extorHS2}, note that $-K_Z=\sum_{i=1}^6D_i$ and thus  $C$ is the column matrix associated to $(1,\ldots,1)$.
Then $D+\epsilon K_Z$  is ample over $Z$ if and only if $$ d_1+d_5>d_6+\epsilon,\quad d_2+d_6>2d_1,\quad d_3+d_5>d_4+\epsilon\,\mbox{ and }\, d_4+d_6>2d_5.$$ 
Hence, there exists $\epsilon<0$ such that $D+\epsilon K_Z$  is ample over $Z$ if and only if $d_2+d_6>2d_1$ and $d_4+d_6>2d_5$. Similarly, there exists $\epsilon'<0$ such that $D'+\epsilon' K_Z$  is ample over $Z$ if and only if $d_2'+d_6'>2d_1'$ and $d_4'+d_6'>2d_5'$.\\

\noindent As for condition \ref{extorHS3}, 
note first that the HMMP from any ample divisor of $Y$ ends with $Y\to S$, because only one extremal ray of $NE(Y)$ is $K$-negative. If $d_1+d_3<d_2+d_4$ (resp. $d_1+d_3>d_2+d_4$) the HMMP from $D$ gives the first (resp. the second) projection $\Pbb^1\times\Pbb^1\to\Pbb^1$

Since $D$ and $D'$ are given up to linearly equivalence, we can choose them such that $d_1=d_2=d_1'=d_2'=0$. In particular, $Q_D$ and $Q_{D'}$ have a "south-west" vertex at 0 (but the same is not true for $Q^{\epsilon,\lambda}$ for $\epsilon\neq 0$). The conditions on $D$ and $D'$ are then

$$ d_3>0,\quad d_6>d_4>0\,\mbox{ and }\, d_4<d_5<\frac{1}{2}(d_4+d_6),\, \mbox{  with either } d_3<d_4\mbox{ or }d_3>d_4;
$$

$$ d_3'>0,\quad d_6'>0,\quad d_4'>2d_3'+d_6'\,\mbox{ and }\, d_3'+d_6'<d_5'<\frac{1}{2}(d_4'+d_6').
$$

For example, we can choose $B$ and $B'$ to be the column matrices associated to $(0,0,-1,-2,-5/2,-4)$ and $(0,0,-1,-6,-7/2,-2)$ respectively.

In Figure~\ref{fig:toric1}, we draw $\Omega_{\emptyset}$ for this choice of $B,B'$.
\begin{figure}
\caption{Scheme of $\Omega_\emptyset$ for the first toric example.}\label{fig:toric1}
\begin{center}
\begin{tikzpicture}[scale=3]
\draw (0,0) node {$\circ$};
\draw (-0.3,0) node {$(0,0)$};
\draw (0,-0.5) node {$\circ$};
\draw (-0.3,-0.5) node {$(0,1/2)$};
\draw (1,-0.5) node {$\bullet$};
\draw (1,-0.6) node {$\omega_{L_1}$};
\draw (2,-0.5) node {$\bullet$};
\draw (2,-0.6) node {$\omega_{L_2}$};
\draw (3,-0.5) node {$\circ$};
\draw (3.3,-0.5) node {$(1,1/2)$};
\draw (3,0) node {$\circ$};
\draw (3.3,0) node {$(1,0)$};
\draw[color=red] (3,-0.5)--(0,-1/2);
\draw (1,-0.5)--(0,1/2);
\draw (0.45,1/4) node {$\omega_{1,4,5}$};
\draw (1,-0.5)--(3,3/2);
\draw (1.2,-1/8) node {$\omega_{3,4,5}$};
\draw (2,-0.5)--(0,3/2);
\draw (1.95,-1/4) node {$\omega_{1,5,6}$};
\draw (2,-0.5)--(3,1/2);
\draw (2.6,-1/8) node {$\omega_{3,5,6}$};
\draw (0,-1/4) node {$X=\Pbb^1\times\Pbb^1$};
\draw (3,-1/4) node {$Y=\mathbb{F}_2$};
\draw (3/2,1) node {$Z$};
\draw (0.9,1/8) node {$\tilde{X}$};
\draw (3/2,-1/4) node {$\mathbb{F}_1$};
\draw (2.1,1/8) node {$\tilde{Y}$};
\draw[color=red] (1/2,-0.6) node {$\Pbb^1$};
\draw[color=red] (3/2,-0.6) node {$\Pbb^1$};
\draw[color=red] (5/2,-0.6) node {$\Pbb^1$};
\draw[color=red] (1,-0.7) node {$\Pbb^1$};
\draw[color=red] (2,-0.7) node {$\Pbb^1$};
\end{tikzpicture}
\end{center}
\end{figure}
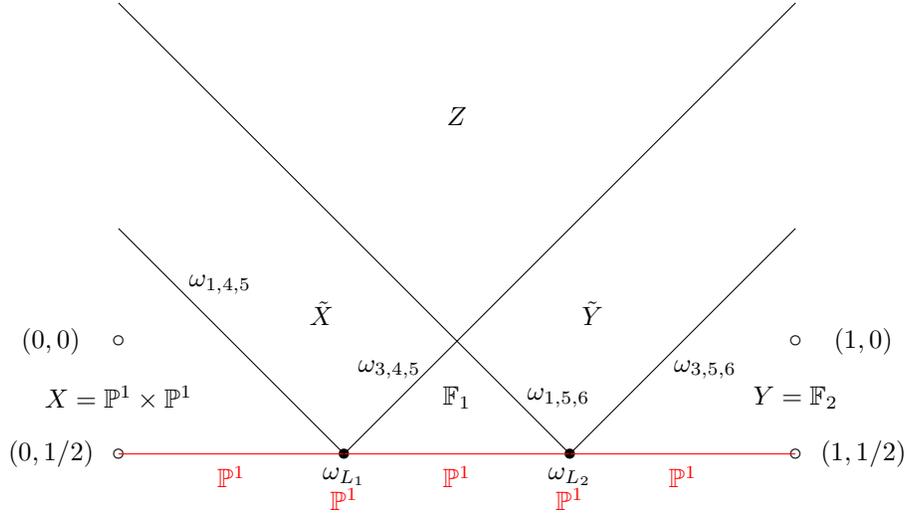
We denote by $\tilde{X}$ is the blow-up of $X$ at a point and $\tilde{Y}$ is the blow-up of $\mathbb{F}_1$ at a point in the $(-1)$-section.
Note that $L_1=\{1,3,4,5\}$ and $L_2=\{1,3,5,6\}$. Also note that $(1/2,0)$ is in $U_0'$.

In the above example, we have $d_3<d_4$. If we choose $B$ to be the column matrices associated to $(0,0,-6,-1,-3/2,-3)$
 (and same $B'$ as before), then $d_3>d_4$. Thus we obtain the horospherical Sarkisov program illustrated in Figure~\ref{fig:toric2}. Note that the first Sarkisov link 
is the type IV link from the first to the second projection $\Pbb^1\times\Pbb^1\longrightarrow\Pbb^1$.
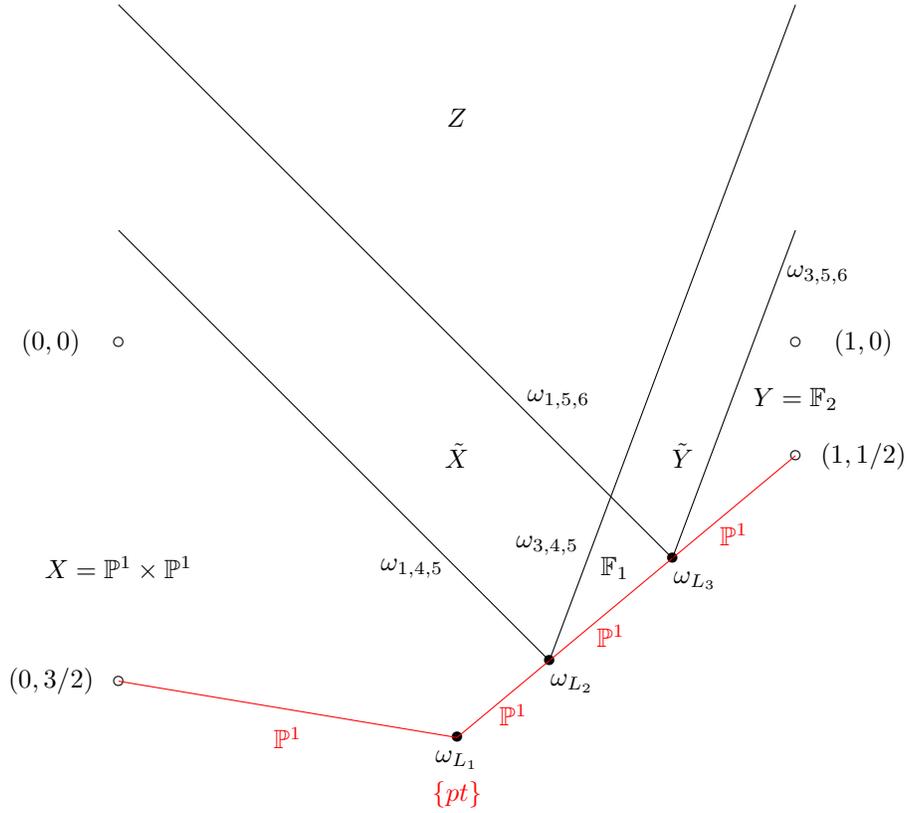
\begin{figure}
\caption{Scheme of $\Omega_\emptyset$ for the second toric example.}\label{fig:toric2}
\begin{center}
\begin{tikzpicture}[scale=3]
\draw (0,0) node {$\circ$};
\draw (-0.3,0) node {$(0,0)$};
\draw (0,-1.5) node {$\circ$};
\draw (-0.3,-1.5) node {$(0,3/2)$};
\draw (3/2,-7/4) node {$\bullet$};
\draw (3/2,-7/4-0.1) node {$\omega_{L_1}$};
\draw[color=red] (0,-1.5)--(3/2,-7/4);

\draw (21/11,-31/22) node {$\bullet$};
\draw (21/11+0.1,-31/22-0.1) node {$\omega_{L_2}$};
\draw (27/11,-21/22) node {$\bullet$};
\draw (27/11+0.1,-21/22-0.1) node {$\omega_{L_3}$};
\draw (3,-0.5) node {$\circ$};
\draw (3.3,-0.5) node {$(1,1/2)$};
\draw (3,0) node {$\circ$};
\draw (3.3,0) node {$(1,0)$};
\draw[color=red] (3/2,-7/4)--(3,-1/2);
\draw (21/11,-31/22)--(0,1/2);
\draw (1.3,-1) node {$\omega_{1,4,5}$};
\draw (21/11,-31/22)--(3,3/2);
\draw (1.9,-0.9) node {$\omega_{3,4,5}$};
\draw (27/11,-21/22)--(0,3/2);
\draw (1.95,-1/4) node {$\omega_{1,5,6}$};
\draw (27/11,-21/22)--(3,1/2);
\draw (3.1,0.3) node {$\omega_{3,5,6}$};
\draw (0,-1) node {$X=\Pbb^1\times\Pbb^1$};
\draw (3,-1/4) node {$Y=\mathbb{F}_2$};
\draw (3/2,1) node {$Z$};
\draw (1.5,-0.5) node {$\tilde{X}$};
\draw (2.2,-1) node {$\mathbb{F}_1$};
\draw (2.5,-0.5) node {$\tilde{Y}$};
\draw[color=red] (3/4,-1.75) node {$\Pbb^1$};
\draw[color=red] (7/4,-1.65) node {$\Pbb^1$};
\draw[color=red] (24/11,-1.3) node {$\Pbb^1$};
\draw[color=red] (30/11,-0.85) node {$\Pbb^1$};
\draw[color=red] (3/2,-2) node {$\{pt\}$};
\end{tikzpicture}
\end{center}
\end{figure}
The fifth line of $A$ does not correspond to a divisor of $X$ or $Y$, but is added in order to get $Z$ to be smooth. The strategy given in the paper also works by chosing a (not necessarily smooth) resolution of indeterminacies $Z'$, but non-terminal varieties can appear.
In this example, if we forget the fifth line of $A$, we can obtain a unique  Sarkisov link of type II with $X'=Y'=Z'$.

\section{A rank one horospherical example}

Choose a connected algebraic group $G$ such that $$S\backslash R=\{\alpha_1,\alpha_2,\alpha_3,\alpha_4,\alpha_5\}$$ with the notation of section~\ref{sec:horo}. Let $\chi=\varpi_1+\varpi_2-\varpi_3-\varpi_4$ and $H\subset P$ be the kernel in $P$ of the character $\chi$. In particular, $M=\Zbb\chi$. We identify $\chi$ with 1 in $M\simeq \Zbb$. 
Then $N\simeq \Zbb$, $\alpha_{1M}^\vee=\alpha_{2M}^\vee=1$, $\alpha_{3M}^\vee=\alpha_{4M}^\vee=-1$ and  $\alpha_{5M}^\vee=0$.

Since $G/H$ is of rank one, projective $G/H$-embedding are  uniquely determined by their colors, that is, by subsets of $\{\alpha_1,\alpha_2,\alpha_3,\alpha_4\}$. Denote by $X_K$ the projective $G/H$-embedding  such that $\mathcal{F}_{X_K}=\{\alpha_k\mid\,k\in K\}$. In particular, $Z:=X_\emptyset$ is a common resolution of all $X_K$.

Denote by $P_K$ the parabolic subgroup of $G$ containing $B$ whose the set of simple roots is $\{\alpha_k\mid\,k\in K\}\cup R$.

Let $H'\subset P_5$ be the kernel in $P_5$ of the character $\chi$. It is a horospherical subgroup associated to the same lattice $M$. Denote by $Y_K$ the projective $G/H'$-embedding $Y$ such that $\mathcal{F}_{Y_K}=\{\alpha_k\mid\,k\in K\}$.

Suppose that the coefficients $a_{\alpha_i}$'s are respectively 2, 3, 2, 3 and 2 for any $i=1,\dots,5$. We consider $A$ to be the column matrix associated to $$(\alpha_{1M}^\vee,\alpha_{2M}^\vee,\alpha_{3M}^\vee,\alpha_{4M}^\vee,\alpha_{5M}^\vee, 1,-1)=(1,1,-1,-1,0,1,-1).$$ Then $C$ is the column matrix associated to $(2,3,2,3,2,1,1)$. 

Let $B$ and $B'$ be the column matrices associated to $-(0,1,7,6,5,2,2)$ and $-(2,0,6,7,1,3,7)$ respectively. Then the associated horospherical Sarkisov program 
gives a link of type IV\textsubscript m, followed by a link of type III, a link of type IV\textsubscript m and a last one of type IV\textsubscript s.

We then obtain the horospherical Sarkisov program illustrated in Figure~\ref{fig:horo} (we only draw $\Omega_\emptyset$ for $\epsilon\geq 0$ and $0\leq \delta\leq 1$). In red we draw the one dimensional $\omega_I$'s giving fibrations, in blue the ones giving divisorial contractions and in black the ones giving flips.\\

\begin{figure}
\caption{Scheme of $\Omega_\emptyset$ for the rank one horospherical  example.}\label{fig:horo}
\begin{center}
\begin{tikzpicture}[scale=3]
\draw (0,0) node {$\circ$};
\draw (-0.3,0) node {$(0,0)$};
\draw (3,0) node {$\circ$};
\draw (3.3,0) node {$(1,0)$};
\draw (0,-2/3) node {$\circ$};
\draw (-0.3,-2/3) node {$(0,2/3)$};
\draw (3*1/16,-13/16) node {$\bullet$};
\draw (3*1/16,-13/16-0.1) node {$\omega_{L_1}$};
\draw[color=red] (0,-2/3)--(3*1/16,-13/16);

\draw (3*5/12,-7/6) node {$\bullet$};
\draw (3*5/12,-7/6-0.1) node {$\omega_{L_2}$};
\draw[color=red] (3*5/12,-7/6)--(3*1/16,-13/16);

\draw (3*2/3,-7/6) node {$\bullet$};
\draw (3*2/3,-7/6-0.1) node {$\omega_{L_3}$};
\draw[color=red] (3*5/12,-7/6)--(3*2/3,-7/6);

\draw (3*7/8,-3/4) node {$\bullet$};
\draw (3*7/8+0.05,-3/4-0.1) node {$\omega_{L_4}$};
\draw[color=red] (3,-1/2)--(3*2/3,-7/6);
\draw (3,-0.5) node {$\circ$};
\draw (3.3,-0.5) node {$(1,1/2)$};

\draw (3*1/16,-13/16)--(1,0);
\draw[color=blue] (3*5/12,-7/6)--(3*3/4,-1/2);
\draw (3*3/4,-1/2) node {$\bullet$};
\draw[color=blue] (3*5/6,0)--(3*3/4,-1/2);

\draw (3*3/4,-1/2)--(3*7/8,-3/4);

\draw (0.7,-0.15) node {$\omega_{1,2}$};

\draw[color=blue] (1.6,-0.8) node {$\omega_{4,7}$};

\draw[color=blue] (2.25,-1/4) node {$\omega_{3,7}$};

\draw (2.55,-0.6) node {$\omega_{3,4}$};

\draw (0,-1/3) node {$X=X_1$};
\draw (3,-1/4) node {$Y=X_{2,3}$};
\draw (1.5,-0.5) node {$X_{2}$};
\draw (2,-0.9) node {$X_{2,4}$};

\draw[color=red] (-0.1,-0.9) node {$G/P_1$};
\draw[color=red] (0.7,-1.1) node {$G/P_2$};
\draw[color=red] (1.6,-1.3) node {$G/P_{2,4}$};
\draw[color=red] (2.45,-1.05) node {$Y_{2,4}$};
\draw[color=red] (2.95,-0.7) node {$Y_{2,3}$};
\end{tikzpicture}
\end{center}
\end{figure}
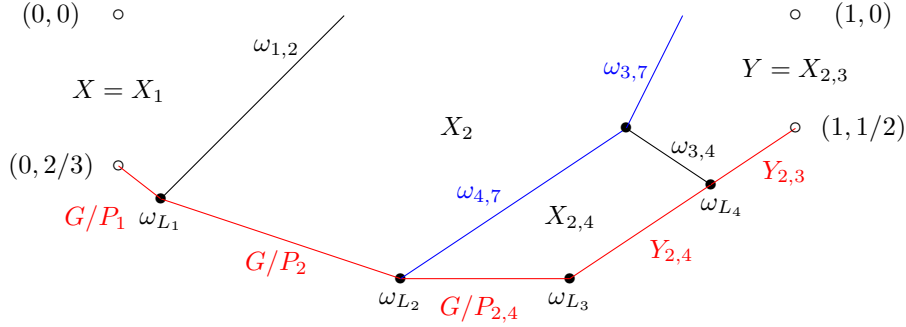


\bibliographystyle{amsalpha}
\bibliography{HoroSarkisovProgram.bib}

\end{document}